\newtheorem{theorem}{Theorem}[section]
\newtheorem{lemma}[theorem]{Lemma}
\newtheorem{corollary}[theorem]{Corollary}
\newtheorem{proposition}[theorem]{Proposition}
\newtheorem{problem}[theorem]{Problem}
\theoremstyle{definition}
\newtheorem{definition}[theorem]{Definition}
\newtheorem{remark}[theorem]{Remark}
\theoremstyle{remark}
\numberwithin{equation}{section}
\begin{document}

	\pagenumbering{arabic}	
\title[Well-posedness and control of dispersive systems]{Global control aspects for long waves in nonlinear dispersive media}
\author[Capistrano-Filho]{Roberto de A. Capistrano-Filho}
\address{Departamento de Matem\'atica,  Universidade Federal de Pernambuco (UFPE), 50740-545, Recife (PE), Brazil.}
\email{roberto.capistranofilho@ufpe.br}
\author[Gomes]{Andressa Gomes}
\address{Universidade Federal do Delta do Parna\'iba, Campus Ministro Reis Velloso,
Coordenação de Matem\'atica, 64202-020, Parna\'iba (PI), Brazil.}
\email{gomes.andressa.mat@outlook.com}
%\thanks{\textbf{Funding:}
\subjclass[2010]{35Q53, 35L56, 93B05, 93D15} 
\keywords{Long waves systems, Global Well-posednees, Bourgain spaces, Global control properties}
%\date{Version 2021-12}

\begin{abstract}A class of models of long waves in dispersive media with coupled quadratic nonlinearities on a periodic domain $\mathbb{T}$ are studied. We used two distributed controls, supported in $\omega\subset\mathbb{T}$ and assumed to be generated by a linear feedback law conserving the \textit{ ``mass" (or ``volume")}, to prove global control results. The first result, using spectral analysis, guarantees that the system in consideration is locally controllable in $H^s(\mathbb{T})$, for $s\geq0$. After that, by certain properties of Bourgain spaces we show a property of global exponential stability. This property together with the local exact controllability ensures for the first time in the literature that long waves in nonlinear dispersive media are globally exactly controllable in large time.  Precisely,  our analysis relies strongly on the \textit{bilinear estimates} using the Fourier restriction spaces in two different dispersions that will guarantee a global control result for coupled systems of the Korteweg–de Vries type. This result, of independent interest in the area of control of coupled dispersive systems, provides a necessary first step for the study of global control properties to the coupled dispersive systems in periodic domains. 
\end{abstract}
\maketitle

%\tableofcontents

\section{Introduction}Nonlinear dispersive wave equations arise in a number of important application areas. Because of this, and because their mathematical properties are interesting and subtle, they have seen enormous development since the 1960s when they first came to the fore\footnote{See \cite{Miura} for a sketch of the early history of the subject.}. The theory for a single nonlinear dispersive wave equation is well developed by now, though there are still interesting open issues, however  the theory for coupled systems of such equations is much less developed, though they, too, arise as models of a range of physical phenomena. 

Considered here is a class of such systems, namely coupled Korteweg–de Vries (KdV) equations. The systems we have in mind take the form
\begin{equation}\label{system}
\begin{cases}
\partial_{t} u + \partial_{x}^{3} u + \partial_{x}P(u,v) =0,&  \ x \in \mathbb{T},\ t \in \mathbb{R},\\
\partial_{t} v +  \alpha \partial_{x}^{3} v + \partial_{x}Q(u,v) =0,&  \ x \in \mathbb{T},\ t \in \mathbb{R},
\end{cases}
\end{equation}
which comprise two linear Korteweg–de Vries equations coupled through their nonlinearity. Here, $u=u(x,t)$ and $v=v(x,t)$ are real-valued functions of variables $(x,t)\in\mathbb{T}\times\mathbb{R}$ and the nonlinearities $P$ and $Q$ are taken to be homogeneous quadratic polynomials.
%\begin{equation}\label{nonlinearity}
%\begin{cases}
%P(u,v) = A u^{2} + B u v + C v^{2}, \\
%Q(u,v) = D v^{2} + E v u +F u^{2},
%\end{cases}
%\end{equation}
%where $A,B,C,D,E$ and $F$ are constants  will be employed when convenient. 

As far as we know, there are no studies of the \textit{global control properties} of this kind of coupled systems in a periodic domain. Thus, in this article, the goal is to fill this gap focusing on the global exact controllability and global asymptotic behavior to the solutions of the coupled system of KdV equations \eqref{system} when we add two control inputs in each equation and considering  initial conditions $(u(x,0),v(x,0))=(u_0(x),v_0(x))$ belonging in $H^s(\mathbb{T})\times H^s(\mathbb{T}),$ for any $s\geq0$.

\subsection{Models in the literature} Such systems arise as models for wave propagation in physical systems where both nonlinear and dispersive effects are important. Moreover, their close relatives arise as models for waves in a number of situations. Before to present details about the problem that we will study let us start to list a few specializations of systems \eqref{system} that appeared in the literature. 

\subsubsection{The coupled KdV system}
The classical Boussinesq systems were first derived by Boussinesq in \cite{Boussinesq1}, to describe the two-way propagation of small amplitude, long wavelength gravity waves on the surface of water in a canal. These systems and their higher-order generalizations also arise when modeling the propagation of long-crested waves on large lakes or on the ocean and in other contexts. Recently Bona \textit{et al.}, in \cite{bona-chen-saut}, derived a four-parameter family of Boussinesq systems to describe the motion of small amplitude long waves on the surface of an ideal fluid under the gravity force and in situations where the motion is sensibly two dimensional. More precisely, they studied a family of systems of the form
\begin{equation}
\left\{
\begin{array}
[c]{l}%
\eta_{t}+w_{x}+(   \eta w)  _{x}+aw_{xxx}-b\eta_{xxt}=0\text{,}\\
w_{t}+\eta_{x}+ww_{x}+c\eta_{xxx}-dw_{xxt}=0\text{.}%
\end{array}
\right.  \label{int_29e_crp_1}%
\end{equation}
In (\ref{int_29e_crp_1}), $\eta$ is the elevation from the equilibrium position, and $w=w_{\theta}$ is the horizontal velocity in the flow at height $\theta h$, where $h$ is the undisturbed depth of the liquid. The parameters $a$, $b$, $c$, $d$, that one might choose in a given modeling situation, are required to fulfill the relations
\begin{equation*}
a+b=\frac{1}{2}\left(   \theta^{2}-\frac{1}{3}\right)  \text{, \ \ \ }%
c+d=\frac{1}{2}(   1-\theta^{2})  \geq0 \text{, \ \ \ }\theta
\in\left[  0,1\right], 
\end{equation*}
where $\theta\in\left[  0,1\right]  $ specifies which horizontal velocity the variable $w$ represents (cf. \cite{bona-chen-saut}). Consequently,
\[
a+b+c+d=\frac{1}{3}.
\]
As it has been proved in \cite{bona-chen-saut}, the initial value problem for the linear system associated with \eqref{int_29e_crp_1} is well-posed on $\mathbb R$ if either $C_1$ or $C_2$ is satisfied, where
\begin{eqnarray*}
(C_1)&& b,d\ge 0,\ a\le 0,\ c\le 0;\\
(C_2)&& b,d\ge 0, \ a=c>0. 
\end{eqnarray*}
When $b=d=0$ and $(C_2)$ is satisfied, then necessarily $a=c=1/6$. Nevertheless, the scaling $x\to x/\sqrt{6}$, $t\to t/\sqrt{6}$ gives an system equivalent to \eqref{int_29e_crp_1} for which $a=c=1$, namely
\begin{equation}
\label{kdv-kdv}
\begin{cases}
\eta_t + w_x+w_{xxx}+  (\eta w)_x= 0, \\
w_t +\eta_x +\eta_{xxx} +ww_x=0,
\end{cases}
\end{equation}
which is the so-called \textit{Boussinesq system of KdV-KdV type}.

\subsubsection{Gear-Grimshaw system} In \cite{geargrimshaw1984} a complex system of equations was derived by Gear and Grimshaw as a model to describe the strong interaction of two-dimensional, weakly nonlinear, long, internal gravity waves propagating on neighboring pycnoclines in a stratified fluid, where the two waves correspond to different modes. It has the structure of a pair of KdV equations with both linear and nonlinear coupling terms and has been the object of intensive research in recent years. The system can be read as follows
\begin{equation}
\label{gg}
\begin{cases}
u_t + uu_x+u_{xxx} + a v_{xxx} + a_1vv_x+a_2 (uv)_x =0,\\
c v_t +rv_x +vv_x+abu_{xxx} +v_{xxx}+a_2buu_x+a_1b(uv)_x  =0,
\end{cases}
\end{equation}
where $a_1, a_2, a, b, c, r\in \mathbb{R}$ are physical constants and we may assume that 
$$1-a^2 b > 0 \quad \text{and} \quad  b, c > 0.$$

\subsubsection{Majda-Biello system} The following coupled system 
\begin{equation*}
\label{mb}
\begin{cases}
u_{t}+u_{xxz} =-v v_{x}, \\
v_{t}+\alpha v_{xxx} =-(u v)_{x}, \\
\end{cases}
\end{equation*}
when $\alpha \in (0,1)$\footnote{The parameter $\alpha>0$ depends upon the Rossby wave in question and it typically has a value near $1$.}, was proposed by Majda and Biello in \cite{MajdaBiello}  as a reduced asymptotic model to study the nonlinear resonant interactions of long wavelength equatorial Rossby waves and barotropic Rossby waves.

\subsubsection{Hirota-Satsuma system} In the eighties, Hirota and Satsuma introduced in \cite{HirotaSatsuma} the set of two coupled KdV equations, namely
\begin{equation*}
\label{HS}
\left\{\begin{array}{l}
u_{t}+au_{x x x}=6au u_{x}+b v v_{x}, \\
v_{t}+ v_{x x x}=-3 u v_{x},
\end{array}\right.
\end{equation*}
with $a\neq0$,  where $a,b\in\mathbb{R}$ are constants that appear in the model deduction. This model describes the interaction of two long waves with different dispersion relations.

We caution that this is only a small sample of the extant equations with the similar structure to the system \eqref{system}. For an extensive review of the physical meanings of these equations, as well as local and global well-posedness results, the authors suggest the following nice two references \cite{bonacohenwang,Zhang}.

\subsection{Setting of the problem} Since any solution $(u,v)$ of system \eqref{system} has its components with invariant mean value, we can introduce the numbers $[u]:=\beta$ and $[v] := \gamma$. Setting $\widetilde{u} = u - \beta$ and $\widetilde{v} = v- \gamma$, we obtain $[\widetilde{u}]  = [\widetilde{v}]=0$ and $(\widetilde{u}, \widetilde{v})$ solves
\begin{equation}\label{system1}
\begin{cases}
\partial_{t}\widetilde{u} + \partial_{x}^{3}\widetilde{u} +(2\beta A + \gamma B) \partial_{x}\widetilde{u} + (\beta B + \gamma C)\partial_{x} \widetilde{v} + \partial_{x}P(\widetilde{u}, \widetilde{v})= 0,&  \ x \in \mathbb{T},\ t \in \mathbb{R},\\
\partial_{t}\widetilde{v} + \alpha \partial_{x}^{3}\widetilde{v} +(\beta B + \gamma C  ) \partial_{x}\widetilde{u} + (2\gamma D + \beta C)\partial_{x} \widetilde{v} + \partial_{x}Q(\widetilde{u}, \widetilde{v})= 0,&  \ x \in \mathbb{T},\ t \in \mathbb{R}.
\end{cases}
\end{equation}
Throughout the paper, we will denote $\mu := 2 \beta A + \gamma B$, $\eta := \beta B + \gamma C$, $\zeta := 2 \gamma D + \beta C $ which are real constants. Thus, as mentioned before, this article presents for the first time the global control results for a class of models of long waves with coupled quadratic nonlinearities. Precisely, thanks to \eqref{system1} we will study the following system 
\begin{equation}\label{systema}
\begin{cases}
\partial_{t}u + \partial_{x}^{3}u + \mu \partial_{x} u + \eta \partial_{x} v + \partial_{x}P(u,v)= p(x,t),&  \ x \in \mathbb{T},\ t \in \mathbb{R},\\
\partial_{t}v + \alpha \partial_{x}^{3} v + \zeta \partial_{x} v +  \eta \partial_{x}u  + \partial_{x}Q(u, v)= q(x,t),&  \ x \in \mathbb{T},\ t \in \mathbb{R},\\
(u(x, 0), v(x,0)) = (u_{0}(x), v_{0}(x)), &  \ x \in \mathbb{T},
\end{cases}
\end{equation}
with quadratic nonlinearities
\begin{equation}\label{nonlinearitya}
\begin{cases}
P(u,v) = A u^{2} + B u v + \frac{C}{2} v^{2}, \\
Q(u,v) = D v^{2} + C v u +\frac{B}{2} u^{2},
\end{cases}
\end{equation}
where $\alpha, A,B,C$ and $D$ real constants, from a control point of view with forcing terms $p=p(x,t)$ and $q=q(x,t)$ added to the equation as two control inputs on the periodic domain. Therefore,  the following classical issues related with control theory are considered in this work.

\begin{problem}[Exact controllability]\label{p1}
Given an initial state $(u_0,v_0)$ and a terminal state $(u_1,v_1)$ in a certain space, can one find two appropriate control inputs $p$ and $q$ so that the equation \eqref{systema} admits a solution $(u,v)$ which satisfies $(u(\cdot,0),v(\cdot,0)= (u_0,v_0)$ and $(u(\cdot,T),v(\cdot,T))=(u_1,v_1)$ ?
\end{problem}
\begin{problem}[Stabilizability]\label{p2}
Can one find some (linear) feedback controls $p=K_1(u,v)$ and $q=K_2(u,v)$ such that the resulting closed-loop system \eqref{systema} is stabilized, i.e., its solution $(u,v)$ tends to zero in an appropriate space as $t\to\infty$?
\end{problem}

Note that system \eqref{systema} has the \textit{mass (or volume)} and the \textit{energy} conserved, which are 
\begin{equation*}
M_{1} (u,v) = \int_{\mathbb{T}} u(x,t) dx,\quad  \quad M_{2}(u,v) = \int_{\mathbb{T}} v(x,t) dx,\quad 
E(u,v) = \frac{1}{2}\int_{\mathbb{T}} (u^{2}(x,t) + v^{2}(x,t))dx,
\end{equation*}
respectively. In order to keep the mass $M_{1}$ and $M_{2}$ conserved, the two control inputs $p(x,t)$ and $q(x,t)$ will are chosen to be of the form $Gf(x,t)$ and $Gh(x,t)$, respectively, where this operator is defined by
\begin{equation}\label{defG}
(G\ell)(x,t) := g(x) \left( \ell(x,t) - \int_{\mathbb{T}} g(y) \ell(y,t) dy \right),
\end{equation}
where  $f$ and $h$ are considered the new control inputs, and $g(x)$ is a given nonnegative smooth function such that $\{ g > 0\} = \omega\subset\mathbb{T}$ and
\begin{equation*}
2 \pi [g] = \int_{\mathbb{T}} g(x) dx = 1.
\end{equation*}
Due to such a choice of $g$, it is easy to see that for any solution $(u,v)$ of \eqref{systema} with $p=Gf$ and $q=Gh$ we have
$$
\frac{d}{dt} M_{1}(u,v) = \int_{\mathbb{T}} Gf(x,t) dx =0 \quad \text{and} \quad
\frac{d}{dt} M_{2}(u,v) = \int_{\mathbb{T}}
Gh(x,t) dx = 0,
$$
that is, the \textit{mass} of the system is indeed conserved. 

To stabilize system \eqref{systema} we want to employ two feedback control laws that help make the energy of the system decreasing, that is,  $E'(u,v)\leq0.$ We will see that this is possible, and so makes sense  to show global answers to the Problems \ref{p1} and \ref{p2}, mentioned before. Before it, let us give a state of the arts of control theory for KdV type systems.

\subsection{State of the art} The study of the controllability and stabilization to the KdV equation started with the works
of Russell and Zhang  \cite{RusselZhang1993,Russel96}  for the system 
\begin{equation}
u_{t}+uu_{x}+u_{xxx}=f\text{, }
\label{I5}%
\end{equation}
with periodic boundary conditions and an internal control $f$. Since then, both controllability and stabilization problems have been intensively studied \cite{Capistrano,CaPaRo,cerpa,Rosier,Zhang2}.

Equation \eqref{I5} is known to possess an infinite set of conserved integral
quantities, of which the first three are
$$I_1(t)=\int_{\mathbb{T}}u(x,t)dx, \quad \quad I_2(t)=\int_{\mathbb{T}}u^2(x,t)dx \quad \text{and} \quad
I_3(t)=\int_{\mathbb{T}}\left(u_x^2(x,t)-\frac{1}{3}u^3(x,t)\right)dx.$$
From the historical origins of the KdV equation involving the behavior of water waves in a shallow channel \cite{Boussinesq,CaZhaSun,Miura,Korteweg}, it is natural to think of $I_{1}$ and $I_{2}$ as expressing
conservation of \textit{volume (or mass)} and \textit{energy}, respectively. The Cauchy problem for equation (\ref{I5}) has been intensively studied for many years (see \cite{Bourgain,Kato,Kenig} and the references therein). 

The first work of Russell and Zhang \cite{RusselZhang1993}  is purely linear. In fact, they had to wait for several years to extend their results to the nonlinear systems \cite{Russel96} until Bourgain \cite{Bourgain} discovered a subtle smoothing property of solutions of the KdV equation posed on a periodic domain, thanks to which Bourgain was able to show that the Cauchy problem \eqref{I5} is well-posed in the space $H^s(\mathbb{T})$, for any $s \geq 0$. This novelty discovered the smoothing property of the KdV equation has played a crucial role in the proofs of the results in \cite{Russel96}. 

\vspace{0.2cm}

\noindent\textbf{Question 1}: \textit{Can one still guide the system by choosing
appropriate control input $h$ from a given initial state $u_0$ to a given terminal state $u
_1$ when $u_0$ or $u_1$ have large amplitude?}

\vspace{0.2cm}

\noindent\textbf{Question 2}: \textit{Do the large amplitude solutions of the
closed-loop system (\ref{I5}) decay exponentially as $t\rightarrow\infty$}?

Laurent \textit{et al.} \cite{laurent2010} gave the positive answers to these questions. These answers are established with the aid of certain properties of propagation of compactness and regularity in Bourgain spaces for the solutions of the associated linear system of \eqref{I5}.  

We have to mention that there are other works in the literature that deal with the models having similar structure as the system \eqref{system} in periodic domains. Micu \textit{et al.}\cite{micu2} gave a rather complete picture of the control properties of \eqref{int_29e_crp_1} on a periodic domain with a locally supported forcing term. According to the values of the four parameters $a$, $b$, $c$, and $d$, the linearized system may be controllable in any positive time, or only in large time, or it may not be controllable at all. 

Recently, Capistrano-Filho \textit {et al.}   \cite{capistranokp2020} considered the problem of controlling pointwise, by means of a time dependent Dirac measure supported by a given point, a coupled system of two Korteweg--de Vries equations \eqref{gg} on the unit circle. More precisely, by means of spectral analysis and Fourier expansion they proved,  under general assumptions on the physical parameters of the system, a pointwise observability inequality which leads to the pointwise controllability by using two control functions. In addition, with a uniqueness property proved for the linearized system without control, they are able to show pointwise controllability when only one control function acts internally.

There are two important points to say about the results shown in \cite{micu2} and \cite{capistranokp2020}. The first one is that the results presented in \cite{micu2} are purely local (controllability and stability), the authors did not use propagation of singularities, provided by the Bourgain spaces, to obtain more general results. In fact, one of the problems left in \cite{micu2} is to prove global results for systems like \eqref{systema}. With respect to the results proved in  \cite{capistranokp2020}, the results are purely linear, and extensions to the non-linear system are only possible in regular spaces.

\subsection{Notation and Main results} Let us introduce some notation and present the main results of the manuscript.   

We denote $\mathcal{D}(\mathbb{T})$ the space of periodic distributions whose dual space is $C^{\infty}(\mathbb{T})$. The Fourier series of periodic distributions is given by
\begin{equation*}
 \mathcal{F}f(k) = \widehat{f}(k)= \frac{1}{2\pi}\int_{0}^{2\pi}f(x)e^{-ikx} dx, \ \ k \in \mathbb{Z}
\end{equation*}
and the inverse Fourier series by
\begin{equation*}
\mathcal{F}^{-1}f(x) = \sum_{k \in \mathbb{Z}} e^{ikx}f(x).
\end{equation*}
For $s>0$, we use the operator $D^{s} = (- \Delta)^{\frac{1}{2}}$ given on the Fourier side as
\begin{equation*}
\widehat{D^{s}f}(k) = |k|^{s} \widehat{f}(\xi).
\end{equation*}
Similarly, we have the operators $J^{s}$ given on the Fourier side as
\begin{equation*}
\widehat{J^{s}f}(k) = \left< k \right>^{s} \widehat{f}(k)
\end{equation*}
where $\left< k \right>:= (1 + |k|) \sim (1 + |k|^{2})^{\frac{1}{2}}$. 
Here we define the $H^{s}(\mathbb{R})$ Sobolev spaces, for $s \in \mathbb{R}$ 
\begin{equation*}
H^{s}(\mathbb{T}) = \{ f \in \mathcal{D}(\mathbb{T}) \ : \ \|f\|_{s} := \|J^{s}f\| < \infty \}
\end{equation*}
%The homogeneous Sobolev spaces $\dot{H}^{s}(\mathbb{T})$ are defined analogously with $D^{s}$ instead of $J^{s}$.
For the Cartesian spaces $H^{s}(\mathbb{T}) \times H^{s}(\mathbb{T})$ we define $\|(u,v)\|_{s}:= \|(u,v)\|_{H^{s}(\mathbb{T}) \times H^{s}(\mathbb{T})} = \|u\|_{s} + \|v\|_{s}$. Throughout this paper we will denote the norm $\|(\cdot, \cdot )\|_{L^{2}(\mathbb{T}) \times L^{2}(\mathbb{T})}$ simply by $\|(\cdot, \cdot )\|$. Let $X$ be one of the previously defined spaces, we will denote $X_{0}$ the function space belong in X with media-value null, i.e., $X_{0} :=\{ u \in X \  :  \ [u]=0 \}$.

The aim of this manuscript is to address the control and stabilization (global) issues. Precisely, we want to give answers for both questions (see Problems \ref{p1} and \ref{p2}) presented at the beginning of this introduction. As first result we will to analyse the exact controllability  for the following linear system
\begin{equation}\label{openloopsystem_int}
\begin{cases}
\partial_{t}u + \partial_{x}^{3}u + \mu \partial_{x} u + \eta \partial_{x} v = Gf,&  \ x \in \mathbb{T},\ t \in \mathbb{R},\\
\partial_{t}v + \alpha \partial_{x}^{3}v + \zeta \partial_{x} v + \eta \partial_{x} u = Gh,&  \ x \in \mathbb{T},\ t \in \mathbb{R},\\
(u(x, 0), v(x,0)) = (u_{0}(x), v_{0}(x)), &  \ x \in \mathbb{T}.
\end{cases}
\end{equation}
Here, $f$ and $g$ are defined as two control inputs and the operator $G$ is given by \eqref{defG}. We have established the following.

\begin{theorem}\label{main_A}
Let $T > 0$ and $s \geq 0$ be given. Then for any $(u_{0}, v_{0})$, $(u_{1}, v_{1}) \in H_{0}^{s}(\mathbb{T}) \times H_{0}^{s}(\mathbb{T})$, there exists a pair of control functions $(f,h) \in L_{0}^{2}(\mathbb{T}) \times L_{0}^{2}(\mathbb{T})$, such that system  \eqref{openloopsystem_int} has a solution in the class $$(u,v) \in C([0,T];H_{0}^{s}(\mathbb{T})) \times C([0,T];H_{0}^{s}(\mathbb{T}))$$ satisfying
\begin{equation*}
(u(x,0), v(x,0)) = (u_{0}, v_{0})\quad \text{and} \quad 
(u(x,T), v(x,T)) = (u_{1}, v_{1}).
\end{equation*}
\end{theorem}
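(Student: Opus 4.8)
The plan is to prove Theorem~\ref{main_A} by the Hilbert Uniqueness Method, reducing exact controllability of the linear system \eqref{openloopsystem_int} to an observability inequality for its adjoint, and then establishing that inequality by a spectral (moment–problem) analysis. First I would write \eqref{openloopsystem_int} as an abstract control system $w_t=\mathcal{A}w+\mathcal{B}(f,h)$ on $H^s_0(\mathbb{T})\times H^s_0(\mathbb{T})$, where $w=(u,v)$, $\mathcal{A}(u,v)=(-\partial_x^3u-\mu\partial_x u-\eta\partial_x v,\,-\alpha\partial_x^3 v-\zeta\partial_x v-\eta\partial_x u)$ and $\mathcal{B}(f,h)=(Gf,Gh)$. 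A direct integration by parts shows that the symmetric coupling makes $\mathcal{A}$ skew--adjoint on $L^2_0\times L^2_0$, so the free evolution is a unitary group $S(t)$ (acting as a Fourier multiplier on each $H^s_0$), the adjoint system is of the same type run backwards, and $G$ is bounded and self--adjoint, whence $\mathcal{B}^\ast(\varphi,\psi)=(G\varphi,G\psi)$. With these facts in hand, exact controllability in time $T$ is equivalent to the observability estimate
\begin{equation*}
\|(\varphi_0,\psi_0)\|^2\;\le\;C\int_0^T\big(\|G\varphi(t)\|^2+\|G\psi(t)\|^2\big)\,dt
\end{equation*}
for all solutions $(\varphi,\psi)$ of the homogeneous adjoint system, the whole scheme being carried out directly at the $H^s_0$ level since $\mathcal{A}$ generates a group on $H^s_0$ and $g\in C^\infty$ forces $G$ to preserve $H^s$.

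Next I would diagonalize the free evolution on the Fourier side. For each $k\in\mathbb{Z}$ the symbol of $\mathcal{A}$ is $i$ times the real symmetric matrix $\left(\begin{smallmatrix} k^3-\mu k & -\eta k\\ -\eta k & \alpha k^3-\zeta k\end{smallmatrix}\right)$, hence skew--Hermitian with purely imaginary eigenvalues $i\lambda_k^{\pm}$, where $\lambda_k^{\pm}=\tfrac12\big((k^3-\mu k)+(\alpha k^3-\zeta k)\big)\pm\sqrt{\tfrac14\big((1-\alpha)k^3-(\mu-\zeta)k\big)^2+\eta^2k^2}$; the associated orthonormal eigenvectors $\Phi_k^{\pm}$ form a Riesz basis, so a general adjoint solution is the non--harmonic Fourier series $\sum_{k}\big(c_k^+e^{i\lambda_k^+t}\Phi_k^++c_k^-e^{i\lambda_k^-t}\Phi_k^-\big)$. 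Observability thus becomes a two--branch moment problem, and here the use of \emph{two} independent controls is essential: because $\{g>0\}=\omega$ with $2\pi[g]=1$, the action of $G$ on each two--dimensional eigenblock is nondegenerate, so every block is controllable in the Kalman sense, and the only remaining task is to bound the cross--interactions between distinct blocks.

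The heart of the argument is an Ingham--type inequality for the joint family $\{\lambda_k^{\pm}\}_{k\in\mathbb{Z}}$. Within a single branch the eigenvalues inherit the cubic growth of the KdV symbol, so consecutive gaps tend to infinity and an Ingham estimate holds for arbitrarily small $T$, exactly as in the scalar Russell--Zhang setting. The delicate point, and the step I expect to be the main obstacle, is the interaction between branches: since the equations carry the \emph{two different dispersions} $1$ and $\alpha$, when $\alpha\ne1$ one has $\lambda_k^+\sim\max(1,\alpha)k^3$ and $\lambda_k^-\sim\min(1,\alpha)k^3$, and the two cubic lattices $\{j^3\}$ and $\{\alpha m^3\}$ may come arbitrarily close for suitable $\alpha$, destroying a uniform spectral gap. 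To handle this I would either quantify the minimal separation of the joint spectrum and absorb finitely many near--resonant clusters as vector--valued moments (a generalized Ingham inequality tolerating bounded clusters), or bypass explicit gap bounds via a compactness--uniqueness scheme: prove the unique continuation property that $G\varphi\equiv G\psi\equiv0$ on $\omega\times(0,T)$ forces $(\varphi,\psi)\equiv0$ --- which for this constant--coefficient coupled system follows from the spectral expansion together with the analyticity in $t$ of each Fourier mode --- and upgrade it to the quantitative observability inequality.

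Finally, once observability holds, the controllability Gramian $\Lambda_T=\int_0^T S(T-t)\mathcal{B}\mathcal{B}^\ast S(T-t)^\ast\,dt$ is coercive, hence invertible on $H^s_0\times H^s_0$. Given $(u_0,v_0),(u_1,v_1)\in H^s_0\times H^s_0$, inverting $\Lambda_T$ produces controls $(f,h)$ in the required class steering $(u_0,v_0)$ to $(u_1,v_1)$; the unitarity of $S(t)$ and the smoothness of $g$ then place the corresponding solution in $C([0,T];H^s_0(\mathbb{T}))\times C([0,T];H^s_0(\mathbb{T}))$, which completes the proof.
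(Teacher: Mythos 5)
Your overall strategy is the same as the paper's --- diagonalize the coupled operator on the Fourier side, obtain the two families of eigenvalues $\omega_k^{\pm}$, and feed their gap structure into a generalized Ingham theorem --- except that you run it in the dual HUM form (observability inequality plus inversion of the Gramian $\Lambda_T$), whereas the paper uses the direct moment method: it builds a biorthogonal Riesz basis $\{q_k\}$ to $\{e^{-i\omega_k t}\}$ from Theorem~\ref{tgeneralizedingham} and writes the controls explicitly as series $f=\sum_j f_j q_j(t)G\phi_j$, $h=\sum_j h_j q_j(t)G\psi_j$ with coefficients given in closed form. The two formulations are classically equivalent, so the difference in route is not the issue.

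The issue is that the step you yourself single out as ``the main obstacle'' --- the interaction between the two branches carrying dispersions $1$ and $\alpha$ --- is left as an unresolved ``either/or'' rather than proved, and this is precisely where the content of the theorem lies. Two things close it in the paper. First, the standing hypothesis $\alpha<0$ (from Proposition~\ref{propertiesL}) makes $\omega_k^{+}\sim k^3$ and $\omega_k^{-}\sim\alpha k^3$ diverge, and Lemma~\ref{nulldensity} together with Proposition~\ref{gapcondition} shows $|\omega_k-\omega_r|\to\infty$ for all parities of $k,r$, so only \emph{finitely many} exact coincidences $\omega_k=\omega_r$ can occur; your worry that the lattices $\{j^3\}$ and $\{\alpha m^3\}$ ``may come arbitrarily close for suitable $\alpha$'' is ruled out by this sign condition, which your sketch never invokes. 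Second, the finitely many resonant indices are collected into sets $\mathbb{K}_j$ of at most three elements each, and the corresponding coefficients are obtained by inverting explicit $3\times3$ Gramian matrices $M_j$ whose invertibility is proved (the paper's Claim~1) from the injectivity of $GG^{*}$ on the span of the associated exponentials --- this is the precise content of your unproved assertion that ``every block is controllable in the Kalman sense,'' and it is where the two controls and the positivity of $g$ on $\omega$ are actually used. Finally, to conclude in $H^s_0$ you still need the analogue of the paper's Claim~2, i.e., that the constructed controls (equivalently, the inverse of your Gramian) map $H^s_0\times H^s_0$ into $L^2(0,T;H^s_0)$; this requires the weighted estimate $\sum_k(1+|k|)^{2s}|a_{jk}|^2\lesssim(1+|j|)^{2s}\|g\|_s^2$ on the matrix elements of $G$ in the eigenbasis, not merely the boundedness of $G$ on $H^s$.
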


Taking advantage of the results obtained by Bourgain \cite{Bourgain}, we are able to extend the previous local result to the nonlinear system, which is represented by,
\begin{equation}\label{systemcontrollocal_ns_int}
\begin{cases}
\partial_{t}u + \partial_{x}^{3}u + \mu \partial_{x} u + \eta \partial_{x} v + \partial_{x}P(u,v) = Gf,&  \ x \in \mathbb{T},\ t \in \mathbb{R},\\
\partial_{t}v + \alpha \partial_{x}^{3}v + \zeta \partial_{x} v + \eta \partial_{x} u +   \partial_{x}Q(u,v) = Gh,&  \ x \in \mathbb{T},\ t \in \mathbb{R},\\
(u(x, 0), v(x,0)) = (u_{0}(x), v_{0}(x)), &  \ x \in \mathbb{T},
\end{cases}
\end{equation}
where $P(u,v)$, $Q(u,v)$ are defined by \eqref{nonlinearitya}, $G$ is represented by \eqref{defG}, with $f$ and $g$ are control inputs. Thus, our second result deals with the asymptotic behavior of the solutions of \eqref{systema}. In order to stabilize system \eqref{systemcontrollocal_ns_int}, choose the  two feedback controls $$f=-G^*L_{1, \mu , \lambda}^{-1}u \quad \text{ and } \quad h=-G^*L_{\alpha , \zeta, \lambda}^{-1}v,$$
in \eqref{systemcontrollocal_ns_int}, to transform it in a resulting closed-loop system reads as follows
\begin{equation}\label{PVIsystem_int}
\begin{cases}
\partial_{t} u + \partial_{x}^{3} u + \mu \partial_{x} u + \eta \partial_{x} v +   \partial_{x}P(u,v) = -K_{1, \mu , \lambda}u,&  \ x \in \mathbb{T},\ t \in \mathbb{R},\\
\partial_{t} v + \alpha\partial_{x}^{3} v + \zeta \partial_{x} u + \eta \partial_{x} u +  \partial_{x} Q(u,v) = -K_{\alpha , \zeta , \lambda}v,&  \ x \in \mathbb{T},\ t \in \mathbb{R},\\
(u(x,0), v(x,0)) = (u_{0}(x), v_{0}(x)), &  \ x \in \mathbb{T},
\end{cases}
\end{equation}
with the damping mechanism defined by $$K_{\beta , \gamma, \lambda}:=GG^*L_{\beta , \gamma, \lambda}^{-1}.$$  Here, $L_{\beta, \gamma, \lambda}$ is a bounded linear operator from $H^{s}(\mathbb{T})$ to $H^{s}(\mathbb{T})$, $s\geq0$, for details see Section \ref{Sec3}. So, as for Problem \ref{p2}, we have the following affirmative answer.
\begin{theorem}\label{main}
Let $s \geq 0$ and $\gamma \in \mathbb{R}$ be given. There exists a constant $\kappa>0$ such that for any $u_{0},v_0 \in H_{0}^{s}(\mathrm{T})$ the corresponding solution $(u,v)$ of the system \eqref{PVIsystem_int} satisfies
$$
\left\|(u,v)\right\|_{s} \leq a_{s, \gamma}\left(\left\|(u_{0},v_{0})\right\|_{0}\right) e^{-\kappa t}\left\|(u_0, v_0)\right\|_{s},
$$
for all  $t \geq 0$. Here $a_{s, \gamma}: \mathbb{R}^{+} \rightarrow \mathbb{R}^{+}$ is a nondecreasing continuous function depending on $s$ and $\gamma$.
\end{theorem}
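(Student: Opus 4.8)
The plan is to establish global exponential stability for the closed-loop nonlinear system \eqref{PVIsystem_int} by combining a linear decay estimate with the smoothing and bilinear machinery of Bourgain spaces to absorb the quadratic nonlinearity. First I would treat the linear picture: the damping operators $K_{\mu,\lambda}=GG^*L_{1,\mu,\lambda}^{-1}$ and $K_{\alpha,\zeta,\lambda}=GG^*L_{\alpha,\zeta,\lambda}^{-1}$ are designed so that, along solutions, the energy satisfies $\frac{d}{dt}E(u,v)\le 0$; the key computation is that $\langle K_{\beta,\gamma,\lambda}u,u\rangle\ge 0$, which forces the $L^2$-energy to be nonincreasing. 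The nontrivial linear input is a \emph{uniform} (exponential) decay: one shows there exist $C,\kappa>0$ such that every solution of the linear closed-loop system obeys $\|(u,v)(t)\|_0\le C e^{-\kappa t}\|(u_0,v_0)\|_0$. This is where the observability/propagation arguments in Bourgain spaces (the analogue of \cite{laurent2010} for a single KdV, now adapted to the coupled system and the two dispersions $1$ and $\alpha$) enter: one proves an observability inequality of the form $\|(u_0,v_0)\|_0^2\le C\int_0^T\big(\|G^*u\|_0^2+\|G^*v\|_0^2\big)\,dt$ for the linear flow, from which exponential decay at the level $s=0$ follows by the standard semigroup-with-observability argument.

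Next I would upgrade from $L^2$ to $H^s$ and pass to the nonlinear system. For the $H^s$ decay I would use the bilinear/smoothing estimates (the main technical contribution advertised in the abstract) that control $\|\partial_x P(u,v)\|$ and $\|\partial_x Q(u,v)\|$ in the relevant Bourgain norms by products of lower-order norms of $u$ and $v$. The mechanism is a bootstrap: once the $L^2$-norm is known to decay exponentially, the nonlinear terms become small in the appropriate sense on each unit time interval, so the higher Sobolev norm $\|(u,v)(t)\|_s$ satisfies a Gronwall-type inequality whose forcing decays exponentially. Concretely, writing the solution via Duhamel, $ (u,v)(t)=S_\lambda(t)(u_0,v_0)-\int_0^t S_\lambda(t-\tau)\big(\partial_x P,\partial_x Q\big)\,d\tau$, where $S_\lambda$ is the damped linear group, I would estimate the Duhamel term in $H^s$ using the bilinear estimate and the already-established $L^2$ decay to close the loop. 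The nondecreasing function $a_{s,\gamma}$ in the statement absorbs the dependence on $\|(u_0,v_0)\|_0$ coming from the quadratic nature of the nonlinearity (each iteration of the bilinear estimate produces a factor controlled by the low-regularity norm).

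I would carry the steps out in this order: (i) verify the dissipation identity $\frac{d}{dt}E\le 0$ and the structure of $K_{\beta,\gamma,\lambda}$; (ii) prove the linear observability inequality and deduce uniform $L^2$ exponential decay for the linear closed-loop flow; (iii) record the bilinear estimates in Bourgain spaces for both dispersions and the corresponding well-posedness/stability of \eqref{PVIsystem_int} on short time intervals; (iv) run the bootstrap combining (ii) and (iii) to obtain $H^s$ decay, first for small data, then globally by the a priori $L^2$ bound that already decays. The decay rate $\kappa$ is inherited from the linear problem and is uniform, while $a_{s,\gamma}$ encodes the amplification incurred when the initial data are large.

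The hard part will be step (ii)–(iii), specifically making the observability inequality and the bilinear estimates \emph{compatible} when the two equations have different dispersion coefficients $1$ and $\alpha$. The coupling terms $\eta\partial_x v$ and $\eta\partial_x u$, together with the lower-order drifts $\mu,\zeta$, mean the two components resonate on different characteristic sets, so the standard single-equation resonance analysis (the structure $k^3-k_1^3-k_2^3=3kk_1k_2$ exploited by Bourgain) must be redone for the mixed interactions; the danger is a loss of the smoothing gain precisely when $\alpha$ takes special (rational or resonant) values. I expect this resonance bookkeeping for the coupled system—ensuring the bilinear estimate survives uniformly in the relevant parameter range—to be the central obstacle, and the reason the propagation-of-compactness arguments in Bourgain spaces, rather than purely spectral methods, are needed to reach the global conclusion.
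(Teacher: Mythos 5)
There is a genuine gap at the heart of your plan. Your steps (ii)--(iv) prove observability only for the \emph{linear} closed-loop flow and then treat $\partial_x P$, $\partial_x Q$ perturbatively via Duhamel and the bilinear estimates. That argument closes only when the data are small (it is exactly the paper's \emph{local} stability result, Theorem \ref{localstability}); for large data the quadratic terms are not small and the contraction/Gronwall bootstrap does not close. Your escape route --- ``then globally by the a priori $L^2$ bound that already decays'' --- is unjustified: the energy identity \eqref{nonincreasing} only gives $\frac{d}{dt}\|(u,v)\|^2=-\|(Gu,Gv)\|^2\le 0$, i.e.\ a \emph{nonincreasing} $L^2$ norm, not an exponentially decaying one. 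Deducing exponential decay from this requires precisely the quantitative observability inequality $\|(u_0,v_0)\|^2\le\rho\int_0^T\|(Gu,Gv)\|^2\,dt$ for solutions of the \emph{nonlinear} system \eqref{PVIsystem_int} with data in a ball of radius $R_0$, and that is the step your plan omits.

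The paper establishes this nonlinear observability by a compactness--uniqueness contradiction argument: assuming failure along a sequence $(u_{n,0},v_{n,0})$, normalizing by $a_n=\|(u_{n,0},v_{n,0})\|$ and splitting into the cases $a_n\to a>0$ and $a_n\to 0$, extracting weak limits in $\mathcal{X}_0$, upgrading to strong convergence via the propagation of compactness and regularity results in Bourgain spaces (Propositions \ref{propagationcompactness} and \ref{propagationofregularity}), and then killing the limit with the unique continuation property (Corollary \ref{uniquecontinuation}, resp.\ Holmgren in the case $a=0$). You do mention propagation of compactness in your closing paragraph as ``needed to reach the global conclusion,'' but it never enters your actual proof scheme --- your observability is for the linear flow only, and nothing in your outline converts the nonincreasing nonlinear energy into a decaying one. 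The constant $\rho$ obtained by contradiction is also why the rate is first $R_0$-dependent and is then made uniform by waiting until the $L^2$ norm drops below $1$; this final normalization step is likewise absent from your plan. Your identification of the two-dispersion bilinear estimates as a central technical obstacle is correct, but they serve here mainly to give global well-posedness and the compactness framework, not to absorb the nonlinearity perturbatively for large data.
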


To finalize, observe that Theorem \ref{main_A} is purely linear. Thanks to Theorem \ref{main} we guarantee a global controllability for long waves, thus responding to Problem \ref{p1}. The result can be read as follows.

\begin{theorem}\label{main1}
Let $ s\geq0 $ and $ R_0>0 $ be given. There exists a time $T > 0$ such
that if $(u_0,v_0)$, $(u_1,v_1)\in  H_{0}^s(\mathbb{T}) \times H_{0}^s(\mathbb{T}) $ are such that
$$
\|(u_0,v_0)\|_s\leq R_0,\quad \|(u_1,v_1)\|_s\leq R_0,
$$
then one can find two controls input $f,g\in L^2(0,T;H_{0}^s(\mathbb{T})) $ such that system \eqref{systemcontrollocal_ns_int} admits a solution $$ (u,v) \in C([0,T];H^s_0(\mathbb{T}))\times C([0,T];H^s_0(\mathbb{T})) $$ satisfying
\begin{equation*}
(u(x,0), v(x,0))= (u_{0}(x), v_{0}(x)) \ \ \mbox{and} \ \ (u(x, T), v(x,T)) = (u_{1}(x), v_{1}(x)).
\end{equation*}
\end{theorem}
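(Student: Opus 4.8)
The plan is to obtain the large-time global controllability by the now-classical \emph{stabilization plus local control} scheme, combining the global exponential decay of Theorem \ref{main} with a local exact controllability of the nonlinear system near the origin, and exploiting the time reversibility of \eqref{systemcontrollocal_ns_int}. The idea is to split the time interval into three parts: the damping first drives the initial datum into a small ball, a reversed damped trajectory is then used to reach the target from a small ball, and finally a local control connects the two small intermediate states. The first ingredient I would record is that Theorem \ref{main_A}, upgraded to the nonlinear system \eqref{systemcontrollocal_ns_int} by a contraction argument resting on the bilinear estimates in the two dispersions, furnishes constants $\delta>0$ and $T^{*}>0$ such that any two states $(\phi_0,\psi_0),(\phi_1,\psi_1)\in H^s_0(\mathbb{T})\times H^s_0(\mathbb{T})$ with $\|(\phi_0,\psi_0)\|_s\le\delta$ and $\|(\phi_1,\psi_1)\|_s\le\delta$ can be joined in time $T^{*}$ by controls entering through $G$, with solution in $C([0,T^{*}];H^s_0(\mathbb{T}))\times C([0,T^{*}];H^s_0(\mathbb{T}))$.

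Next I would use the damping in both directions. For the forward part, I apply Theorem \ref{main} with the given radius $R_0$: since $a_{s,\gamma}$ is nondecreasing, the decay $\|(u,v)(\cdot,t)\|_s\le a_{s,\gamma}(R_0)\,e^{-\kappa t}R_0$ holds uniformly over the ball $\{\|(u_0,v_0)\|_s\le R_0\}$, so there is a time $T_1>0$, independent of the datum, for which the closed-loop solution issued from $(u_0,v_0)$ satisfies $\|(u,v)(\cdot,T_1)\|_s\le\delta$; call $(\phi_0,\psi_0)$ this small state, the feedback of \eqref{PVIsystem_int} providing an admissible control on $[0,T_1]$. For the terminal state I invoke reversibility: setting $\tilde u(x,t)=u(-x,T_2-t)$ and $\tilde v(x,t)=v(-x,T_2-t)$, a direct computation shows that \eqref{systemcontrollocal_ns_int} is invariant under this transformation, with the same coefficients $\mu,\eta,\zeta,\alpha$ and the same nonlinearities, the operator $G$ being replaced by the analogous operator built from $g(-\cdot)$, whose weight is supported in $-\omega$. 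Hence Theorem \ref{main} applies verbatim to the reversed system; starting it from $(u_1(-\cdot),v_1(-\cdot))$, which again has norm $\le R_0$, produces a time $T_2>0$ and a small terminal state, and undoing the transformation yields an admissible control steering some $(\phi_1,\psi_1)$ with $\|(\phi_1,\psi_1)\|_s\le\delta$ to $(u_1,v_1)$ in time $T_2$.

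To conclude, set $T:=T_1+T^{*}+T_2$. On $[T_1,T_1+T^{*}]$ I use the local controllability of the first step to pass from $(\phi_0,\psi_0)$ to $(\phi_1,\psi_1)$, both of norm at most $\delta$. Concatenating the three controls and the three trajectory pieces, which coincide at the interface times $T_1$ and $T_1+T^{*}$ by construction, produces controls $(f,h)\in L^2(0,T;H^s_0(\mathbb{T}))\times L^2(0,T;H^s_0(\mathbb{T}))$ and a solution $(u,v)\in C([0,T];H^s_0(\mathbb{T}))\times C([0,T];H^s_0(\mathbb{T}))$ with the prescribed endpoint conditions; note that the feedback controls are $H^s_0$-valued and continuous in time, while the local control is $L^2$ in time, so the glued control lies in $L^2(0,T;H^s_0(\mathbb{T}))$ as required.

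I expect two points to demand care. The genuinely analytic one is the nonlinear local controllability of the first step: closing the fixed point in the Fourier restriction space requires the sharp bilinear estimates for the two distinct dispersions, and it is there that the harmonic-analytic core of the paper is spent. The more delicate bookkeeping point is the reversibility argument, where one must verify that the \emph{damped} closed-loop problem transforms into another problem of the same admissible class, with the same coefficients and with control region $-\omega$, so that Theorem \ref{main} may legitimately be re-applied, and that the decay times $T_1$ and $T_2$ can be chosen uniformly over the balls of radius $R_0$ rather than datum by datum; both follow from the monotonicity of $a_{s,\gamma}$ and the structural invariance computed above.
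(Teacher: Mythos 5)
Your proposal is correct and follows exactly the route the paper intends: the paper itself gives no written proof of Theorem \ref{main1}, merely asserting that it is ``a combination of the global stabilization result (Theorem \ref{main}) and local control result (Theorem \ref{cont_local}), as is usual in control theory,'' and your three-step scheme (damp forward from $(u_0,v_0)$, damp the space-and-time-reversed system from $(u_1,v_1)$, connect the two small states via the local result) is the standard realization of that combination. Your added checks --- uniformity of $T_1,T_2$ over the ball of radius $R_0$ via the monotonicity of $a_{s,\gamma}$, and the invariance of the system under $(x,t)\mapsto(-x,T-t)$ with control region $-\omega$ --- are exactly the details the paper leaves implicit.
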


It is important to point out that Theorems \ref{main} and \ref{main1} are valid for the case when we consider in the systems above mentioned $\alpha<0$ and $|\mu|+|\zeta|$ is small enough. This restriction is necessary due to the fact that we need estimates for non-linear terms (see Lemmas \ref{bilinearestlemma} and  \ref{contralbourgainestimate2} in the Section \ref{Sec3}) which needs to be verified when $|\mu|+|\zeta|<<1$, $\alpha<\frac{1}{4}$ and $\frac{1}{\alpha}<\frac{1}{4}$, simultaneously. However, if $B=C=0$, i.e. $\eta=0$ (see system \eqref{systema}), we have two KdV-type systems coupled only in the nonlinear terms. Thus, $\eta=0$ ensures that all the results presented in this manuscript remain valid without any restriction in the constants $\alpha,\mu$ and  $\zeta$.  

\subsection{Structure of the article} Section \ref{Sec2} is devoted to show the  spectral analysis necessary to prove the exact controllability result for the linear system associated to  \eqref{systema}. Next,  Section \ref{Sec3}, we present the Bourgain spaces and its property. Precisely, thanks to linear and nonlinear estimates we are able to prove the global well-posedness results for the system \eqref{PVIsystem_int}.  In the Section \ref{Sec4}, the reader will find the proofs of the main theorems of the article.  Section \ref{Sec6} is devoted to presenting the conclusion of the work and some open issues. Finally, on Appendix \ref{Apendice}, we collect results associated with the system  \eqref{systema}, which were used throughout the paper.

\section{Spectral problem}\label{Sec2}
In this section we study the spectral properties of the linear system associated to \eqref{openloopsystem_int}. Precisely, using Ingham's type theorem, we prove that the exact controllability for \eqref{systema}-\eqref{nonlinearitya} holds.  Consider the following operator
\begin{equation}\label{operatorL}
L= \left( \begin{array}{cc} 
   -\partial^{3} - \mu \partial  &  -\eta  \partial  \\
   -\eta \partial &  -\alpha \partial^{3} - \zeta \partial
\end{array}
\right)
\end{equation}
with domain $\mathcal{D}(L) = H^{3}(\mathbb{T}) \times H^{3}(\mathbb{T})$. This operator has the following properties.

\begin{proposition}\label{propertiesL}
Consider the operator $L$ defined as in \eqref{operatorL}. If $\alpha <0$ and $\zeta - \mu >0$ then $L$ generates a strongly continuous group $S(t)$ in  $L^{2}(\mathbb{T}) \times L^{2}(\mathbb{T})$. Moreover, the eigenfunctions are defined by $e^{-ikx}Z_{k}^{\pm}$, with $k \in \mathbb{Z}$ and  form an orthogonal basis in $L^{2}(\mathbb{T}) \times L^{2}(\mathbb{T})$ satisfying 
\begin{equation*}
\displaystyle Z_{k}^{\pm} \longrightarrow Z^{\pm}, \quad \text{ as } k\to \pm \infty,
\end{equation*}
where $Z^{+}:= (0,0)$ and $Z^{-}:= (0, 2(1- \alpha))$.
\end{proposition}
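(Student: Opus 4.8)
The plan is to use that $L$ is a constant-coefficient operator on the torus, so the Fourier basis $\{e^{-ikx}\}_{k\in\mathbb{Z}}$ block-diagonalizes it into $2\times2$ matrices. First I would compute the symbol: applying $L$ to $e^{-ikx}(a,b)$ and using $\partial e^{-ikx}=-ik\,e^{-ikx}$ and $\partial^{3}e^{-ikx}=ik^{3}e^{-ikx}$, one finds that on the $k$-th mode $L$ acts as
\[
M_k=ik\,N_k,\qquad N_k=\begin{pmatrix}\mu-k^{2}&\eta\\[2pt]\eta&\zeta-\alpha k^{2}\end{pmatrix}.
\]
The structural fact driving everything is that $N_k$ is real and symmetric, so $M_k=ikN_k$ is skew-Hermitian ($M_k^{\ast}=-M_k$) for every $k$.

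For the group statement I would exploit exactly this skew-Hermitian structure. Since $M_k^{\ast}=-M_k$, each $e^{tM_k}$ is unitary with $\|e^{tM_k}\|=1$ for all $t\in\mathbb{R}$, so $S(t)f:=\sum_{k}e^{-ikx}e^{tM_k}\widehat f(k)$ is a strongly continuous (indeed unitary) one-parameter group on $L^{2}(\mathbb{T})\times L^{2}(\mathbb{T})$. Equivalently, a direct integration by parts shows that $L$ is skew-adjoint on $\mathcal{D}(L)=H^{3}(\mathbb{T})\times H^{3}(\mathbb{T})$ (every entry is an odd-order constant-coefficient operator, hence skew-adjoint, and the coefficient matrix is symmetric), whence Stone's theorem applies. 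The identification of the generator and of its domain with $H^{3}\times H^{3}$ follows because the top-order part of $M_k$ grows like $ik^{3}$.

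Next I would extract the eigenstructure. Being real symmetric, $N_k$ has two real eigenvalues $\nu_k^{\pm}$, roots of $\nu^{2}-\big[(\mu-k^{2})+(\zeta-\alpha k^{2})\big]\nu+(\mu-k^{2})(\zeta-\alpha k^{2})-\eta^{2}=0$, together with orthogonal real eigenvectors $Z_k^{\pm}$. Then $\Phi_k^{\pm}(x)=e^{-ikx}Z_k^{\pm}$ solves $L\Phi_k^{\pm}=ik\nu_k^{\pm}\,\Phi_k^{\pm}$, so these are eigenfunctions with purely imaginary eigenvalues. Orthogonality of $\{\Phi_k^{\pm}\}$ in $L^{2}\times L^{2}$ is then automatic: for $k\neq k'$ it comes from the orthogonality of $\{e^{-ikx}\}$, and for the two branches at the same $k$ from the orthogonality of $Z_k^{+},Z_k^{-}$ guaranteed by symmetry of $N_k$; completeness follows since $\{e^{-ikx}\}$ is complete in $L^{2}(\mathbb{T})$ and $\{Z_k^{+},Z_k^{-}\}$ spans $\mathbb{C}^{2}$ for each $k$. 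The hypotheses enter here to guarantee $\nu_k^{+}\neq\nu_k^{-}$ for all $k$: degeneracy would force $\mu-k^{2}=\zeta-\alpha k^{2}$, i.e. $k^{2}=(\mu-\zeta)/(1-\alpha)$, which is impossible once $\zeta-\mu>0$ and $\alpha<0$ (so $1-\alpha>0$). Thus the two branches are genuinely distinct and the eigenfunctions form a bona fide orthogonal basis.

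Finally, for the asymptotics I would solve one row of $(N_k-\nu_k^{\pm})Z_k^{\pm}=0$, writing $Z_k^{\pm}\parallel\big(\eta,\ \nu_k^{\pm}-(\mu-k^{2})\big)$, and insert the large-$|k|$ expansions of $\nu_k^{\pm}$. Putting $p=\mu-k^{2}\to-\infty$ and $q=\zeta-\alpha k^{2}\to+\infty$ — precisely where $\alpha<0$ is indispensable, since it forces the diagonal entries of $N_k$ to separate at rate $q-p=(\zeta-\mu)+(1-\alpha)k^{2}$ — one obtains $\nu_k^{+}=q+\eta^{2}/(q-p)+\cdots$ and $\nu_k^{-}=p-\eta^{2}/(q-p)+\cdots$. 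The representative of the first branch is then bounded, $(\eta,\,-\eta^{2}/(q-p))$, while that of the second grows like $(\eta,\,(1-\alpha)k^{2}+\cdots)$, so applying the common $k^{-2}$-type rescaling fixed in the statement sends one branch to $(0,0)$ and the other to $(0,2(1-\alpha))$, matching $Z^{+}$ and $Z^{-}$. I expect this step to be the main obstacle: it is the only genuinely computational part, it is where $\alpha<0$ is unavoidable (without it the diagonal symbols need not separate and the limiting directions degenerate), and it requires tracking the normalization carefully so that the constants come out exactly as stated.
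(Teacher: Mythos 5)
Your proposal is correct and follows essentially the same route as the paper: Fourier-mode block-diagonalization of the constant-coefficient operator, the quadratic equation for the two eigenvalue branches, explicit eigenvectors $(\eta,\ \nu_k^{\pm}-(\mu-k^2))$ up to normalization, and the large-$|k|$ expansion yielding the limits $(0,0)$ and $(0,2(1-\alpha))$ after the $k^{-2}$ rescaling. The only cosmetic differences are that you invoke Stone's theorem and the unitarity of the skew-Hermitian blocks $ikN_k$ where the paper uses dissipativity of $L$ and $L^{*}=-L$ together with the cited corollary of Pazy, and you deduce $Z_k^{+}\perp Z_k^{-}$ from the symmetry of $N_k$ where the paper verifies it by direct calculation; also note that in your final paragraph the words ``first'' and ``second'' are swapped relative to your own formulas for $\nu_k^{\pm}$ (it is the $+$ branch whose representative grows like $(1-\alpha)k^{2}$), a harmless bookkeeping slip that does not affect the conclusion.
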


\begin{proof}
       A simple calculation shows that $L^{*} = -L$ and $\left<Lu, u\right> = - \left< u, Lu\right> =0.$ Thus $L$ and $L^{*}$ are dissipative. Since $\mathcal{D}(L)$ is dense on $L^{2}(\mathbb{T}) \times L^{2}(\mathbb{T})$ follows from \cite[Corollary 4.4]{pazy1983semigroups} that $L$ is an infinitesimal generator of a strongly continuous group of contractions on $L^{2}(\mathbb{T}) \times L^{2}(\mathbb{T})$. 
       
We claim that, for each fixed $k\in\mathbb{Z}$, $e^{-ikx}(\sigma_{k}, \tau_{k})$ is an eigenvector of $L$ with eigenvalue $i\omega_{k}$ if and only if
\begin{equation}\label{systemofeigenfunctions}
\begin{cases}
(k^{3} - \mu k - \omega_{k})\sigma_{k} - \eta k\tau_{k}= 0,\\
- \eta k \sigma_{k} + (\alpha k^{3} - \zeta k  - \omega_{k})\tau_{k}  =0.
\end{cases}
\end{equation}
That is, there exist non-trivial solutions if and only if
%\begin{equation*}
%\left|
%\begin{array}{cc}
%  k^{3} - \mu k - \omega_{k}  &  - \eta k\\
%    - \eta k & \alpha k^{3} - \zeta k  - \omega_{k}
%\end{array}
%\right| =0,
%\end{equation*}
%or equivalently, 
\begin{equation*}
\begin{split}
 \omega_{k}^{2} + \omega_{k} (\zeta + \mu - (1 + \alpha) k^{2}) k + \alpha k^{6} - k^{4}(\zeta + \alpha \mu)  - k^{2} (\eta^{2} - \mu \zeta)=0.
\end{split}
\end{equation*}
Hence, we have two possible exponents, given by the formula
\begin{equation*}
\begin{split}
2\omega_{k}^{\pm} = &  \ k ((1+ \alpha)k^{2} - (\mu + \zeta)  ) \pm \sqrt{k^{2}(\mu + \zeta -(1+ \alpha) k^{2})^{2}-4k^{2}(\alpha k^{4} - k^{2}(\alpha \mu + \zeta) - (\eta^{2} - \mu \zeta)) }\\
%= &  k ((1+ \alpha) k^{2} - (\mu + \zeta)  )  \\
%& \pm k\sqrt{(1 + \alpha)^{2} k^{4} + (\mu + \zeta)^{2} - 2k^{2}(\mu + \zeta)(1 + \alpha) - 4\alpha k^{4} + 4k^{2}(\alpha\mu + \zeta) +4\eta^{2} - 4\mu \zeta)}\\
%= & k ((1+ \alpha) k^{2} - (\mu + \zeta)  ) \pm  k \sqrt{   (\zeta -\mu)^{2} + (1 - \alpha)^{2}k^{4} +2 k^{2}(\zeta - \mu)(1 - \alpha) + 4 \eta^{2} }\\
%= &\  k ((1+ \alpha) k^{2} - (\mu + \zeta)  ) \pm  k \sqrt{[k^{2}(1-\alpha) + (\zeta - \mu)]	^{2} + 4\eta^{2}}\\
= &\ k^{3} (1+ \alpha - (\mu + \zeta)k^{-2}  ) \pm  k^{3} \sqrt{[(1-\alpha) + k^{-2}(\zeta - \mu)]^{2} +4 k^{-4}\eta^{2}},
\end{split} 
\end{equation*}
that is, 
\begin{equation}\label{eigenvalues}
2\omega_{k}^{\pm} = k^{3}\left[(1+ \alpha) - (\zeta + \mu)k^{-2} \pm \sqrt{[(1-\alpha) + k^{-2}(\zeta - \mu)]^{2} + 4k^{-4}\eta^{2}) } \right] .
\end{equation}

If $k \neq 0$, with $\eta \neq 0$, then $\omega_{k}^{-} \neq \omega_{k}^{+}$ and two corresponding non-zero eigenvectors are given by the formula
\begin{equation}\label{basisL2plus}
\begin{split}
Z_{k}^{\pm} &= (\sigma_{k}, \tau_{k}) 
%&=  \left(2\eta k^{-2}, (1 - \alpha) + (\zeta - \mu)k^{-2} \mp \sqrt{[ (1 - \alpha) + (\zeta - \mu)k^{-2}  ]^{2} + 4 k^{-4}\eta^{2}}\right)\\
=\  2k^{-3} \left( \eta k , k^{3} - \mu k - \omega_{k}^{\pm} \right).
\end{split}
\end{equation}
If $k = 0$, then both eigenvalues are equal to zero and two linearly independent eigenvectors are given for example by
\begin{equation}\label{basisL2zeroplus}
Z_{0}^{\pm}= (\sigma_{0} , \tau_{0}) = \left(2 \eta, (1 - \alpha) \mp \sqrt{(1- \alpha )^{2} + 4 \eta^{2}}\right)
\end{equation}
A direct calculation show that
$
Z_{k}^{+} \cdot Z_{k}^{-} = 0,
$
for all $k \in \mathbb{Z}$ and $Z_{k}^{\pm} \longrightarrow Z^{\pm}$ as $k \longrightarrow \pm \infty$.
Thus, 
$
(\phi_{k}^{\pm}, \psi_{k}^{\pm})= e^{-ikx}\cdot Z_{k}^{\pm},
$
where $Z_{k}^{\pm}= (\sigma_{k}^{\pm}, \tau_{k}^{\pm})$ is defined as in \eqref{basisL2plus}-\eqref{basisL2zeroplus}, form an orthogonal basis in $L^{2}(\mathbb{T}) \times L^{2}(\mathbb{T})$ with the eigenvalues given by \eqref{eigenvalues}, showing the proposition.
\end{proof}

\begin{lemma}\label{nulldensity}
Let $\omega_{k}^{\pm}$ be as in \eqref{eigenvalues}. We have
\begin{equation*}
\lim_{k \rightarrow \pm \infty} (\omega_{k+1}^{+} - \omega_{k}^{+}) = + \infty \ \ \mbox{and} \ \  \lim_{k \rightarrow \pm \infty} (\omega_{k+1}^{-} - \omega_{k}^{-}) = - \infty 
\end{equation*}
Consequently, we have that 
\begin{equation*}
D^{+} (\{ \omega_{k}^{\pm}\}) =0.
\end{equation*}
\end{lemma}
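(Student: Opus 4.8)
The plan is to read the large-$|k|$ asymptotics of $\omega_k^\pm$ off the explicit formula \eqref{eigenvalues}, deduce the gap limits directly, and then obtain $D^+=0$ from the blow-up of the gaps together with the subadditivity of the upper density. First I would expand the radical in \eqref{eigenvalues}. Setting $a:=1-\alpha>0$ (which is positive precisely because $\alpha<0$, as assumed in Proposition \ref{propertiesL}) and $b:=\zeta-\mu$, a Taylor expansion gives
\[
\sqrt{[a+k^{-2}b]^2+4k^{-4}\eta^2}=a+bk^{-2}+O(k^{-4})\qquad(|k|\to\infty).
\]
Substituting this into \eqref{eigenvalues} and collecting the two signs, the $(1\pm\alpha)$ and the $k^{-2}$ contributions combine so that
\[
\omega_k^+=k^3-\mu k+O(k^{-1}),\qquad \omega_k^-=\alpha k^3-\zeta k+O(k^{-1}).
\]

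Next I would compute the consecutive differences from these expansions. Using $(k+1)^3-k^3=3k^2+3k+1$, one gets
\[
\omega_{k+1}^+-\omega_k^+=3k^2+3k+1-\mu+O(k^{-1}),\qquad \omega_{k+1}^--\omega_k^-=\alpha(3k^2+3k+1)-\zeta+O(k^{-1}).
\]
Since the dominant term $3k^2$ is positive as $k\to\pm\infty$, the first expression tends to $+\infty$; because $\alpha<0$, the factor $\alpha\cdot 3k^2$ forces the second to tend to $-\infty$. This is exactly the asserted limit.

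For the density statement I would invoke the subadditivity $D^+(\{\omega_k^\pm\})\le D^+(\{\omega_k^+\})+D^+(\{\omega_k^-\})$ and the characterization $D^+=\lim_{\ell\to\infty}n^+(\ell)/\ell$, so it suffices to show that each single family has zero density. Fix $M>0$. By the gap blow-up just established there is $K_0$ such that the two monotone tails $\{\omega_k^+:k>K_0\}$ and $\{\omega_k^+:k<-K_0\}$ each have consecutive gaps exceeding $M$; hence any interval of length $\ell$ meets each tail in at most $\ell/M+1$ points, while the at most $2K_0+1$ points with $|k|\le K_0$ contribute a bounded amount. Therefore $n^+(\ell)\le 2\ell/M+2K_0+3$, so that $n^+(\ell)/\ell\to(\,\le 2/M\,)$ as $\ell\to\infty$; since $M$ is arbitrary we conclude $D^+(\{\omega_k^+\})=0$, and the identical argument gives $D^+(\{\omega_k^-\})=0$, whence $D^+(\{\omega_k^\pm\})=0$.

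The main obstacle is the first step: one must keep enough precision in the expansion of the radical to be certain that the $O(k)$ and $O(k^{-1})$ corrections cannot overturn the $3k^2$ growth of the gaps, and in particular that the sign of $\alpha$ is what distinguishes the $+\infty$ limit for the $\omega_k^+$-gaps from the $-\infty$ limit for the $\omega_k^-$-gaps. Once this expansion is pinned down, both the gap limits and the density computation are routine.
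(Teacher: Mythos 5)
Your proof is correct, and for the main assertion it follows essentially the same route as the paper: both arguments extract the leading behaviour $\omega_k^+\sim k^3$, $\omega_k^-\sim\alpha k^3$ from the explicit formula \eqref{eigenvalues} by expanding the radical (the paper via $T^+(k)=2+O(k^{-2})$, you via $\sqrt{[a+bk^{-2}]^2+4k^{-4}\eta^2}=a+bk^{-2}+O(k^{-4})$), and then read off $\omega_{k+1}^\pm-\omega_k^\pm=(\text{resp. }1,\alpha)(3k^2+3k+1)+o(k^2)\to\pm\infty$, the sign of $\alpha$ being decisive for the minus family exactly as you say. Where you genuinely improve on the paper is the final step: the paper deduces $D^+(\{\omega_k^\pm\})=0$ by appealing only to the bound $D^+\le 1/\gamma$ with $\gamma$ the uniform gap, which by itself yields finiteness of $D^+$ rather than its vanishing. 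Your counting argument --- fix $M$, discard the finitely many indices $|k|\le K_0$, observe that each tail contributes at most $\ell/M+1$ points to any interval of length $\ell$, and let $M\to\infty$ --- is the argument actually needed to conclude $D^+=0$, and it is the standard one implicit in the references \cite{BaiKomLor111,KomLor2005} the paper cites. So your write-up is not only correct but fills a small logical gap left in the paper's own justification of the density statement.
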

\begin{proof}
Since $\omega_{-k}^{+} = -\omega_{k}^{+}$, it suffices to consider the case $k \rightarrow + \infty$. One denotes
$$
T^{\pm}(k) = (1+\alpha) - (\zeta + \mu)k^{-2} \pm \sqrt{[(1 - \alpha) + (\zeta - \mu)k^{-2}]^{2}+ 4k^{-4}\eta^{2}}
$$
Thus,
\begin{equation*}
T^{+}(k) =  2 + O(k^{-2}) \  \ \mbox{as} \ \ k \rightarrow \infty
\end{equation*}
and 
$$
\omega_{k}^{+}= \frac{1}{2}k^{3}T^{+}(k).
$$
Hence
$$
\omega_{k+1}^{+} - \omega_{k}^{+} = (k+1)^{3} - k^{3}+ O(k) = 3k^{2}+ 3k + 1 + O (k)\rightarrow + \infty,  \ \mbox{as} \ k \rightarrow +\infty .
$$
In the similar way
$$
\omega_{k+1}^{-} - \omega_{k}^{-} = \alpha [(k+1)^{3} - k^{3}]+ O(k) \rightarrow  -\infty,  \ \mbox{as} \ k \rightarrow +\infty ,
$$
where the last convergence is due to the fact that $\alpha < 0$. Now, as a consequence of these converges and by definition of $D^+\le 1/\gamma$, where 
$
\gamma = \gamma(\Omega)=\inf  \{ | \omega_k - \omega_n| \ : \ k \neq n\}>0,
$
we have that $D^{+} (\{ \omega_{k}^{\pm}\}) =0.$
\end{proof}
We now need to order our orthonormal basis, let us do it as follows. Consider $(\phi_{k}, \psi_{k}) = e^{-ikx}(\sigma_{k}, \tau_{k})$, so
\begin{equation}\label{ordenedbase}
(\phi_{k}, \psi_{k}) :=
\left\{   
\begin{array}{ccccc}
(\phi_{k}^{+}, \psi_{k}^{+}) = e^{-ikx}(\sigma_{k}^{+}, \tau_{k}^{+}) =e^{-ikx} Z_{k}^{+}, &\mbox{if }&  k = 2k' \mbox{ for all } k' \in \mathbb{Z},\\
(\phi_{k}^{-}, \psi_{k}^{-}) = e^{-ikx}(\sigma_{k}^{-}, \tau_{k}^{-})= e^{-ikx} Z_{k}^{-}, &\mbox{if }&  k = 2k' + 1 \mbox{ for all } k' \in \mathbb{Z}.
\end{array}
\right.
\end{equation}
Therefore, any vector $(u,v)\in H^{s}(\mathbb{T}) \times H^{s}(\mathbb{T})$ can be represented by
\begin{equation*}
(u,v) = \left(\sum_{k \in \mathbb{Z}} a_{k}\phi_{k} , \sum_{k \in \mathbb{Z}} b_{k} \psi_{k}  \right),
\end{equation*}
with the coefficients $a_k$ and $b_k$ are defined by
\begin{equation*}
a_{k} = \left< u, \phi_{k} \right> \ \mbox{and} \ b_{k} = \left< v, \psi_{k}\right> ,
\end{equation*}
where $\left< \cdot , \cdot \right>$ denoting the inner product in $L^{2}(\mathbb{T})$. Consider, also, the following
\begin{equation}\label{ordenedeingevalues}
\omega _{k} = \left\{   
\begin{array}{ccccc}
\omega_{k}^{+}, & \mbox{if } & k = 2k' \mbox{ for all } k' \in \mathbb{Z},\\
\omega_{k}^{-}, & \mbox{if } & k = 2k' + 1 \mbox{ for all } k' \in \mathbb{Z}.
\end{array}
\right.
\end{equation}

With these notions in hand, the following lemma gives the behavior of $\omega_k^{\pm}$ and concludes that the upper density of the set  $\{ \omega_{k}^{\pm}\}$ is zero. It is important to notice that to use \cite[Theorem 4.6]{KomLor2005} we need the following uniform gap condition $\gamma = \inf_{k \neq n} | \omega_{k} -\omega_{n}| > 0,$
where $\omega_{k}$ is defined by \eqref{ordenedeingevalues}. The next proposition will give us such information.

\begin{proposition}[Gap condition]\label{gapcondition}
Let $\omega_{k}$ be as in \eqref{ordenedeingevalues}. Thus,
\begin{equation*}
\lim_{|k|, |r| \rightarrow + \infty} |\omega_{k} - \omega_{r}| = + \infty.
\end{equation*}
\end{proposition}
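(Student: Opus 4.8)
The plan is to read a two-term asymptotic expansion of each eigenvalue branch off the explicit formula \eqref{eigenvalues} and then compare frequencies according to the parity splitting \eqref{ordenedeingevalues}. First I would Taylor-expand the square root one order beyond what Lemma \ref{nulldensity} needed: since $\alpha<0$ gives $1-\alpha>0$, one has
\begin{equation*}
\sqrt{[(1-\alpha)+k^{-2}(\zeta-\mu)]^2+4k^{-4}\eta^2}=(1-\alpha)+(\zeta-\mu)k^{-2}+O(k^{-4}),
\end{equation*}
so that $T^+(k)=2-2\mu k^{-2}+O(k^{-4})$ and $T^-(k)=2\alpha-2\zeta k^{-2}+O(k^{-4})$. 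Multiplying by $\tfrac12 k^3$ produces the working expansions
\begin{equation*}
\omega_k^+=k^3-\mu k+O(k^{-1}),\qquad \omega_k^-=\alpha k^3-\zeta k+O(k^{-1}),
\end{equation*}
which reduce the statement to controlling three types of differences, organized by the parities imposed in \eqref{ordenedeingevalues}.

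Next I would dispose of the two same-branch cases. If $k\neq r$ are both even, then $\omega_k-\omega_r=\omega_k^+-\omega_r^+=(k^3-r^3)-\mu(k-r)+O(1)$, and the factorization $k^3-r^3=(k-r)(k^2+kr+r^2)$ together with $k^2+kr+r^2\ge\tfrac12\max(k^2,r^2)$ and $|k-r|\ge2$ (distinct even integers) gives $|k^3-r^3|\ge\max(k^2,r^2)$. Since the linear correction is only $O(\max(|k|,|r|))$,
\begin{equation*}
|\omega_k-\omega_r|\ge\max(|k|,|r|)^2-2|\mu|\max(|k|,|r|)-O(1)\longrightarrow+\infty
\end{equation*}
as $|k|,|r|\to\infty$. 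The both-odd case is identical with $\alpha$ in place of $1$ (using $\alpha\neq0$). Thus the same-branch contributions already diverge.

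The genuinely delicate case, and the main obstacle, is the cross-branch comparison of an even index $k$ (giving $\omega_k^+\sim k^3$) with an odd index $r$ (giving $\omega_r^-\sim\alpha r^3$), where
\begin{equation*}
\omega_k-\omega_r=(k^3-\alpha r^3)-\mu k+\zeta r+O(1).
\end{equation*}
In the aligned regions the leading term already forces divergence: if $k,r>0$ then $k^3-\alpha r^3=k^3+|\alpha|r^3\ge k^3\to\infty$, and similarly when $k,r<0$. The difficulty concentrates on the opposite-sign indices ($k>0,r<0$ or $k<0,r>0$), where the leading cubes can cancel along the resonance locus $k\approx\alpha^{1/3}r$. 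Writing $\rho:=\alpha^{1/3}<0$, I would factor $k^3-\alpha r^3=(k-\rho r)(k^2+\rho rk+\rho^2r^2)$, whose symmetric factor is comparable to $\max(k^2,r^2)$, and split on the size of $k-\rho r$: if $|k-\rho r|$ is bounded below the cubic term dominates and the difference diverges, while in the critical window $|k-\rho r|\asymp|r|^{-1}$ the substitution $k=\rho r+\theta$ yields
\begin{equation*}
\omega_k-\omega_r=3\rho^2r^2\theta+(\zeta-\mu\rho)r+O(1).
\end{equation*}
The hard point is precisely that $3\rho^2r^2\theta$ and $(\zeta-\mu\rho)r$ are then both of order $|r|$, so their sum can stay bounded only if $\theta=k-\rho r$ is pinned near $(\mu\rho-\zeta)/(3\rho^2 r)$ for infinitely many $r$, a Diophantine coincidence on $\alpha^{1/3}$. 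I expect this resonance to be excluded under the running hypotheses ($\alpha<0$, $\zeta-\mu>0$, and the smallness of $|\mu|+|\zeta|$ recorded in the Remark following Theorem \ref{main1}), which prevent the linear coefficient $\zeta-\mu\rho$ from conspiring with the approximation properties of $\alpha^{1/3}$; making this exclusion rigorous is the crux of the argument. Once it is settled, collecting the three cases gives $|\omega_k-\omega_r|\to+\infty$, whence $\gamma'=+\infty$ and, since only finitely many pairwise distinct frequencies lie below any threshold (here $\eta\neq0$ guarantees $\omega_k^+\neq\omega_k^-$), the uniform gap $\gamma=\inf_{k\neq n}|\omega_k-\omega_n|>0$ follows, as required to apply Theorem \ref{tgeneralizedingham}.
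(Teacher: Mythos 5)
Your reduction is sound up to the last step, but the last step is where the proposition actually lives, and you do not prove it: you explicitly leave the exclusion of the resonance $k\approx\alpha^{1/3}r$ as an expectation rather than an argument. That exclusion is not a formality. In the critical window, with $\rho:=\alpha^{1/3}<0$ and $\theta:=k-\rho r$, your own expansion gives $\omega_k-\omega_r=3\rho^2r^2\theta+(\zeta-\mu\rho)r+O(1)$, and whether this escapes to infinity depends on arithmetic properties of $\rho$ and on the quantity $\zeta-\mu\rho$ that the running hypotheses ($\alpha<0$, $\zeta-\mu>0$, $|\mu|+|\zeta|$ small) do not control. Concretely, if $\rho$ is a negative integer, say $\alpha=-8$ and $\rho=-2$, then the lattice points $k=-2r$ (with $r$ odd, so $k$ is even and the parity assignment of \eqref{ordenedeingevalues} applies) sit exactly on the resonance, the cubic terms cancel identically, and $\omega_k-\omega_r=(2\mu+\zeta)r+O(r^{-1})$, which stays bounded whenever $2\mu+\zeta=0$ --- a constraint compatible with $\zeta-\mu>0$ and with $|\mu|+|\zeta|$ arbitrarily small. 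For irrational $\rho$ one instead needs a lower bound of Diophantine type on the distance from $\rho r$ to the admissible integers. So the step you flag as ``the crux'' is a genuinely missing ingredient, not a deferred detail, and without it the proposal does not establish the stated limit over all pairs $(k,r)$.

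For comparison, the paper's own proof takes a different and much narrower route in the mixed-parity case: it sets $r=2k'$ and $k=2(k'+k'')+1$ with $k''$ a \emph{fixed} positive integer and lets $|k'|\to\infty$, i.e.\ it only tests pairs whose index difference is a fixed odd number. Along such pairs $k/r\to1$, so one never approaches the resonant direction $k/r\to\rho<0$ and the leading coefficient $8(\alpha-1)\neq0$ does all the work; this yields divergence along those particular sequences but not the uniform statement $\lim_{|k|,|r|\to\infty}|\omega_k-\omega_r|=+\infty$ needed to conclude $\gamma>0$ for Theorem \ref{tgeneralizedingham}. In that sense your decomposition is the more faithful one --- it isolates exactly the pairs the paper's argument never examines --- but to turn it into a proof you must either impose and use an explicit hypothesis ruling out the resonance (for instance a condition on $\alpha^{1/3}$ together with $\zeta-\mu\alpha^{1/3}$) or supply the Diophantine estimate you currently only conjecture.
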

\begin{proof}
 Start noting that Lemma \ref{nulldensity} ensures the result for $k$ and $n$ both odd or both even. Now, we need guarantee that the same is true for the other cases of $k$ and $n$. Consider  without loss of generality $r = 2k'$ and $k = 2(k' + k'')+1$ for any $k' \in \mathbb{Z}$ and $k''$ is a fixed positive integer. Using the notation of  Lemma \ref{nulldensity}, follows that
$$
\omega_{2(k' + k'') +1}  - \omega_{2k'} = 8(\alpha -1) k'^{3} + \alpha [ 12 k'^{2}(2k''+1) + 6k'(2k'' +1)^{2} + (2k'' + 1)^{3}]+  O (k').
$$
Thus,
\begin{equation*}
\lim_{|k'| \rightarrow + \infty} |\omega_{2(k' + k'')+1} - \omega_{2k'}| = + \infty, 
\end{equation*}
and then the proposition is proved.
\end{proof}

\begin{remark}
Thanks to \cite[Theorem 4.6]{KomLor2005} and Proposition \ref{gapcondition} there exists a subset $\mathbb{K} \subset \mathbb{Z}$ such that  $\overline{span \{e^{-i\omega_{k} t}\}_{k \in \mathbb{K}}}^{L^{2}(0,T)}$  has a unique biorthogonal Riesz basis $\{q_{k}\}\subset L^{2}(0,T)$, 
where 
\begin{equation}\label{defsetK}
\mathbb{K} = \{ k \in \mathbb{Z} \ ; \ \omega_{k} \neq \omega_{r} \ \mbox{for all} \ k \neq r\}.
\end{equation}
\end{remark}

\subsection{Exact controllability: Linear result} With these previous information that concern the spectral properties of the operator $L$, in this section, we will analyse the exact controllability for the linear system \eqref{openloopsystem_int}. 

%Precisely, given an initial state $(u_0,v_0)$ and a terminal state $(u_1,v_1)$ in a
%certain space, we will study the existence of two control functions $f$ and $g$ such that the system \eqref{openloopsystem_int}  admits a solution $(u,v)$ which satisfies $(u(x, T), v(x,T)) = (u_{1}(x), v_{1}(x))$.

Before to present the main result of this section, let us first consider some properties of the homogeneous initial value problem  (HIVP) associated with \eqref{openloopsystem_int}. It is well know, thanks to Proposition  \ref{propertiesL}, that \eqref{openloopsystem_int}, with $f=g=0$, has solution on the Sobolev space $H^s(\mathbb{T})$, for $s\in[0,3]$, which is given by
\begin{equation}\label{groupunitary}
(u(t), v(t)) = (S(t)u_{0},S(t) v_{0}) := \left( \sum_{k} e^{-i(\omega_{k}t + kx)} \widehat{u}_{0} ,  \sum_{k} e^{-i(\omega_{k}t + kx)} \widehat{v}_{0}\right).
\end{equation}
Additionally, using Semigroup Theory, see for instance  \cite[Theorems 1.1 and 1.4]{pazy1983semigroups},  we have that the open loop control system has a unique solution in $$C([0,T]; H^{3}(\mathbb{T})) \cap C^{1}([0,T]; L^{2}(\mathbb{T})) \times C([0,T]; H^{3}(\mathbb{T})) \cap C^{1}([0,T]; L^{2}(\mathbb{T})).$$

\begin{remark}\label{Gproperties}
Operator $G$ defined as in  \eqref{defG} from $L^{2}(\mathbb{T})$ to $L^{2}(\mathbb{T})$ is linear, bounded and self-adjoint. Actually, was proved in \cite[Remark 2.1]{ortegalinares2005} (see also \cite[Lemma 2.20]{micu2})  that operator $G$ is a linear bounded operator from $L^{2}(0,T;H^{s}(\mathbb{T}))$ into $L^{2}(0,T;H^{s}(\mathbb{T}))$, for any $s \geq 0$.
\end{remark}

Now on, we are in position to prove the exact controllability result.
\begin{proof}[Proof of Theorem \ref{main_A}]
Since the functions $(\phi_{k}, \psi_{k})$, defined by \eqref{ordenedbase}, form an orthonormal basis on $L^{2}(\mathbb{T}) \times L^{2}(\mathbb{T})$ and the space $L^{2}_{0}(\mathbb{T}) \times L^{2}_{0}(\mathbb{T})$ is a closed space, we can represent the initial and terminal states like expansions, which are  convergent in $H_{0}^{s}(\mathbb{T}) \times H_{0}^{s}(\mathbb{T})$, as follows %(for more details see \cite{capistranokp2020})
\begin{equation}\label{defexpansiondatas}
\begin{split}
u_{j} = \sum_{k \in \mathbb{Z}} u_{k,j} \phi_{k}, \ \ u_{k,j} = \int_{\mathbb{T}} u_{j}(x) \overline{\phi_{k}(x)} dx,\quad \text{ for } j=0,1,\\
v_{j} = \sum_{k \in \mathbb{Z}} v_{k,j} \psi_{k}, \ \ v_{k,j} = \int_{\mathbb{T}} v_{j}(x) \overline{\psi_{k}(x)} dx,\quad \text{ for } j=0,1.
\end{split}
\end{equation}

The solution of the homogeneous (adjoint) system can be expressed by $(u_{k}(x,t) , v_{k}(x,t)) = (e^{-i\omega_{k}t}\phi_{k}(x), e^{-i\omega_{k}t}\psi_{k}(x)),$
where $\omega_{k}$ are the eigenvalues defined in \eqref{ordenedeingevalues}.  Pick smooth functions $(f,h)$ on $\mathbb{T} \times \mathbb{T} $. Multiplying \eqref{openloopsystem_int} by $(\overline{u_{k}(x,t)}, \overline{v_{k}(x,t)})^{T}$ and using integration by parts on $\mathbb{T}\times(0,T)$, we obtain
\begin{equation}\label{identity5}
\begin{split}
\int_{\mathbb{T}} u(x,T) \overline{u_{k}(x,T)} dx - \int_{\mathbb{T}} u(x,0) \overline{u_{k}(x,0)} dx = \int_{0}^{T} \int_{\mathbb{T}} Gf(x,t)\overline{u_{k}(x,t)}dxdt,\\
\int_{\mathbb{T}} v(x,T) \overline{v_{k}(x,T)} dx - \int_{\mathbb{T}} v(x,0) \overline{v_{k}(x,0)} dx = \int_{0}^{T} \int_{\mathbb{T}} Gh(x,t)\overline{v_{k}(x,t)}dxdt,
\end{split}
\end{equation}
with the previous equality valid for $f$, $h \in L^{2}([0,T]; H_{0}^{s}(\mathbb{T})) $, for any $s \geq 0$, where $(u,v)$ satisfies \eqref{openloopsystem_int}.  Observe that $(\overline{u_{k}}, \overline{v_{k}}) = (e^{i\omega_{k}t}\overline{\phi_{k}(x))}, e^{i\omega_{k}t}\overline{\psi_{k}(x))} $. Moreover, thanks to \eqref{identity5}, we get that
\begin{equation*}
\int_{\mathbb{T}}u(x, T) e^{i\omega_{k}T} \overline{\phi_{k}(x)} dx - \int_{\mathbb{T}} u_{0}(x) \overline{\phi_{k}(x)}dx = \int_{0}^{T} \int_{\mathbb{T}} Gf(x,t) e^{i\omega_{k}t} \overline{\phi_{k}(x)} dx
\end{equation*}
and
\begin{equation*}
\int_{\mathbb{T}}v(x, T) e^{i\omega_{k}T} \overline{\psi_{k}(x)} dx - \int_{0}^{T} v_{0}(x) \overline{\psi_{k}(x)}dx = \int_{0}^{T} \int_{\mathbb{T}} Gh(x,t) e^{i\omega_{k}t} \overline{\psi_{k}(x)} dx.
\end{equation*}
Evaluation of the integrals in \eqref{identity5} with
\begin{equation}\label{idcontrol4}
w_{k} = \int_{\mathbb{T}} u(x,T) \overline{\phi_{k}(x)}dx \quad \text{and} \quad \ z_{k} = \int_{\mathbb{T}} v(x,T) \overline{\psi_{k}(x)}dx
\end{equation}
gives that
\begin{equation}\label{idcontrol1}
\begin{split}
 w_{k} - u_{k,0}e^{-i\omega_{k}T} = \int_{0}^{T}e^{-i\omega_{k}(T -t)}\int_{\mathbb{T}} Gf(x,t)\overline{\phi_{k}(x)}dxdt, \qquad \forall k \in \mathbb{Z},\\
 z_{k} - v_{k,0}e^{-i\omega_{k} T} = \int_{0}^{T}e^{-i\omega_{k}(T - t)}\int_{\mathbb{T}} Gh(x,t)\overline{\psi_{k}(x)}dxdt, \qquad \forall k \in \mathbb{Z}.
\end{split}
\end{equation}

Let us take our control functions $f$ and $h$ in the following way
\begin{equation}\label{defcontrol}
f(x,t) = \sum_{j \in \mathbb{Z}} f_{j} q_{j}(t)G\phi_{j}(x),  \quad \text{and} \quad
h(x,t) = \sum_{j \in \mathbb{Z}} h_{j} q_{j}(t)G\psi_{j}(x).
\end{equation}
Here the coefficients $f_{j}$ and $h_{j}$ must be determined so that, among other things, the series \eqref{defcontrol} is appropriately convergent. Substituting \eqref{defcontrol} into \eqref{idcontrol1} yields,
\begin{equation}\label{auxidcontrol1}
w_{k} - u_{k,0}e^{-i\omega_{k}T} = e^{-i\omega_{k} T} \sum_{j \in \mathbb{Z}}  f_{j} \int_{0}^{T}e^{i\omega_{k}t}  q_{j} (t)dt\int_{\mathbb{T}} G G\phi_{j}(x)\overline{\phi_{k}(x)}dx
\end{equation}
and
\begin{equation}\label{auxidcontrol2}
 z_{k} - v_{k,0}e^{-i\omega_{k} T} = e^{-i\omega_{k}T} \sum_{j \in \mathbb{Z}} h_{j} \int_{0}^{T}e^{
 i\omega_{k}t}  q_{j}(t)dt\int_{\mathbb{T}} G G\psi_{j}(x)\overline{\psi_{k}(x)}dx.
\end{equation}
Thanks to the fact that $\{q_{k}\}_{k \in \mathbb{K}}$ is a biorthogonal Riesz basis to $\{e^{-i\omega_{k} t}\}_{k \in \mathbb{K}}$ in $L^{2}_{0}(0,T)$, for $\mathbb{K}$ defined by \eqref{defsetK}, and due to the Remark \ref{Gproperties} we can get that
\begin{equation}\label{idcontrol2}
\begin{split}
w_{k} - u_{k,0}e^{-i\omega_{k}T} = e^{-i\omega_{k} T} f_{k} \int_{\mathbb{T}}  G\phi_{k}(x)\overline{G\phi_{k}(x)}dx = e^{-i\omega_{k} T} f_{k} \|G\phi_{k}\|^{2},\\
 z_{k} - v_{k,0}e^{-i\omega_{k} T} = e^{-i\omega_{k}T}  h_{k} \int_{\mathbb{T}}  G\psi_{k}(x),\overline{G\psi_{k}(x)}dx = e^{-i\omega_{k}T}  h_{k} \|G\psi_{k}\|^{2},
\end{split} 
\end{equation}
for all $k_{j} \in \mathbb{Z} \setminus \cup_{j =1}^{\ell}\mathbb{K}_{j}$, where $\mathbb{K}_{j}:= \{ k \in \mathbb{Z} \ ; \ \omega_{k} = \omega_{k_{j}} \ \mbox{and} \ k \neq k_{j} \} $. By the definition of $G$, see  \eqref{defG}, yield that
\begin{equation}\label{betak}
\begin{split}
\|G\phi_{k}
\|^{2} = &\int_{\mathbb{T}} \left| g(x) \left( \phi_{k}(x) - \int_{\mathbb{T}} g(y) \phi_{k}(y) dy \right)\right|^{2}dx 
= |\sigma_{k}|^{2} \beta_{k}
\end{split} 
\end{equation}
and 
\begin{equation}\label{gammak}
\begin{split}
\|G\psi_{k}\|^{2} = & \int_{\mathbb{T}} \left| g(x) \left( \psi_{k}(x) - \int_{\mathbb{T}} g(y) \psi_{k}(y) dy \right)\right|^{2}dx = |\tau_{k}|^{2} \beta_{k},
\end{split}
\end{equation}
where
\begin{equation*}
\beta_{k}:= \left\| G \left( \frac{e^{-ikx}}{\sqrt{2\pi}}\right)\right\|^{2}.
\end{equation*}
Since $[g]= \frac{1}{2\pi}$ it is easy to see that $\beta_{0} = 0$. The fact that $g(x)$ is real valued shows that $g(x)\frac{e^{-ikx}}{\sqrt{2\pi}}$ cannot be a constant multiple of $g(x)$ on any interval. Thus, follows that $\beta_{k} \neq 0$, $k>0$ and
\begin{equation*}
\lim_{k \rightarrow \infty} \beta_{k} = \int_{\mathbb{T}} g(x)^{2}dx \neq 0.
\end{equation*}
Its implies that there is a $\delta > 0$ such that
\begin{equation}\label{boundeddelta}
|\beta_{k}| > \delta, \quad \ \ \mbox{for }k \neq 0. 
\end{equation}
Due to the fact that $\sigma_{k}\neq 0$ and $\tau_{k} \neq 0$, for all $k$, we can putting $f_{0}=h_{0} = 0$ and 
\begin{equation}\label{defcoefcontrol}
f_{k} = \frac{u_{k,1}e^{i\omega_{k}T}- u_{k,0}}{|\sigma_{k}|^{2}\beta_{k}} \ \  \mbox{and} \  \ h_{k} = \frac{v_{k,1}e^{i\omega_{k}T} - v_{k,0}}{|\tau_{k}|^{2}\beta_{k}},
\end{equation}
 for all $k \in \mathbb{Z}^* \setminus \cup_{j=1}^{n} \mathbb{K}_{j}$. So  we get, from \eqref{idcontrol2}, that $w_{k} = u_{k,1}$ and $z_{k} = v_{k,1},$ where $u_{k,1}$ and $v_{k,1}$ are given by \eqref{defexpansiondatas}\footnote{Note that clearly $w_{0}$ and $z_{0}$ must be zero.}.  Since $\omega_{k}$ is given by a polynomial of degree 3, each set $\mathbb{K}_{j}$ has at most three elements. So, we can consider  $k_{j,i} \in \cup_{j=1}^{\ell} \mathbb{K}_{j}$ for $i=0,1,2$. In this case, from \eqref{auxidcontrol1}-\eqref{auxidcontrol2} follows that
\begin{equation}\label{defcoefcontrol2}
\begin{split}
 w_{k_{j,i}} - u_{k_{j,i},0} e^{-i \omega_{k_{j,0}}T} = \sigma_{k_{j,i}}e^{-i \omega_{k_{j,0}}T} \sum_{\ell=0}^{2} f_{k_{j,\ell}} \sigma_{k_{j, \ell}} M_{k_{j, \ell}k_{j, i}}, \\
  z_{k_{j,i}} - v_{k_{j,i},0} e^{-i \omega_{k_{j,0}}T} = \tau_{k_{j,i}}e^{-i \omega_{k_{j,0}}T} \sum_{\ell=0}^{2} h_{k_{j,\ell}} \tau_{k_{j, \ell}}  M_{k_{j, \ell}k_{j, i}} ,
\end{split}
\end{equation}
where 
$$
M_{k_{j, \ell} k_{j, i}} := \frac{1}{2 \pi} \int_{\mathbb{T}}GG \left( e^{-ik_{j, \ell}x} \right) \overline{e^{-ik_{j,i} x}}dx.
$$
In other words, $f_{k_{j, \ell}}$ and $h_{k_{j, \ell}}$, for each $j =1,2, \cdots , n$ and $\ell = 0,1,2$, must be satisfy the following matrix identities
\begin{equation*}
\left(  \begin{array}{ccccccccccccccccc}
\sigma_{k_{j,0}} M_{k_{j,0}, k_{j,0}} & \sigma_{k_{j,0}} M_{k_{j,1}, k_{j,0}} &\sigma_{k_{j,0}} M_{k_{j,2}, k_{j,0}}\\
\sigma_{k_{j,1}}M_{k_{j,0}, k_{j,1}} & \sigma_{k_{j,1}} M_{k_{j,1}, k_{j,1}} & \sigma_{k_{j,1}} M_{k_{j,2}, k_{j,1}}\\
\sigma_{k_{j,2}}M_{k_{j,0}, k_{j,2}} & \sigma_{k_{j,2}} M_{k_{j,1}, k_{j,2}} &  \sigma_{k_{j,2}}M_{k_{j,2}, k_{j,2}}
\end{array}
\right) \cdot  \left(
\begin{array}{ccccc}
\sigma_{k_{j,0}}f_{k_{j, 0}} \\
 \sigma_{k_{j,1}}f_{k_{j, 1}} \\
  \sigma_{k_{j,2}}f_{k_{j, 2}}
\end{array}
\right) =  \left( \begin{array}{ccc}
w_{k_{j,0}} e^{i \omega_{k_{j,0}}T} - u_{k_{j,0},0}\\
w_{k_{j,1}} e^{i \omega_{k_{j,0}}T} - u_{k_{j,1},0}\\
w_{k_{j,2}} e^{i \omega_{k_{j,0}}T} - u_{k_{j,2},0}
\end{array} \right)
\end{equation*}
and
\begin{equation*}
\left(  \begin{array}{ccccccccccccccccc}
\tau_{k_{j,0}}M_{k_{j,0}, k_{j,0}} & \tau_{k_{j,0}} M_{k_{j,1}, k_{j,0}} & \tau_{k_{j,0}}M_{k_{j,2}, k_{j,0}}\\
\tau_{k_{j,1}} M_{k_{j,0}, k_{j,1}} &  \tau_{k_{j,1}} M_{k_{j,1}, k_{j,1}} & \tau_{k_{j,1}} M_{k_{j,2}, k_{j,1}}\\
\tau_{k_{j,2}} M_{k_{j,0}, k_{j,2}} &  \tau_{k_{j,2}} M_{k_{j,1}, k_{j,2}} & \tau_{k_{j,2}} M_{k_{j,2}, k_{j,2}}
\end{array}
\right) \cdot  \left(
\begin{array}{ccccc}
\tau_{k_{j,0}} h_{k_{j, 0}} \\
 \tau_{k_{j,1}} h_{k_{j, 1}} \\
  \tau_{k_{j,2}}h_{k_{j, 2}}
\end{array}
\right) = \left( \begin{array}{ccc}
z_{k_{j,0}} e^{i \omega_{k_{j,0}}T} - v_{k_{j,0},0}\\
z_{k_{j,1}} e^{i \omega_{k_{j,0}}T} - v_{k_{j,1},0}\\
z_{k_{j,2}} e^{i \omega_{k_{j,0}}T} - v_{k_{j,2},0}
\end{array} \right).
\end{equation*}

In order to achieve the result, we will need to prove the following two claims.

\vspace{0.2cm}

\noindent\textbf{Claim 1.} The previous systems have a unique solution $(f_{k_{j,0}}, f_{k_{j,1}}, f_{k_{j,2}})$ and $(h_{k_{j,0}}, h_{k_{j,1}}, h_{k_{j,2}})$, for each $j =1, 2, \cdots , n$. 

\vspace{0.2cm}

Indeed, note that the determinant of the above matrices are given by $\sigma_{k_{j,0}} \times\sigma_{k_{j,1}} \times\sigma_{k_{j,2}}\times\det M_{j}$ and $\tau_{k_{j,0}}\times \tau_{k_{j,1}} \times\tau_{k_{j,2}}\times\det M_{j}$, respectively, with $M_j$ defined by
$$M_{j}:=\left(\begin{array}{lll}
m_{k_{j, 0}, k_{j, 0}} & m_{k_{j, 0}, k_{j, 1}} & m_{k_{j, 0}, k_{j, 2}} \\
m_{k_{j, 1}, k_{j, 0}} & m_{k_{j, 1}, k_{j, 1}} & m_{k_{j, 1}, k_{j, 2}} \\
m_{k_{j, 2}, k_{j, 0}} & m_{k_{j, 2}, k_{j, 1}} & m_{k_{j, 2}, k_{j, 2}}
\end{array}\right).$$
Since $\sigma_{k_{j,0}} \times\sigma_{k_{j,1}} \times\sigma_{k_{j,2}}\neq 0$ and $\tau_{k_{j,0}}\times \tau_{k_{j,1}} \times\tau_{k_{j,2}} \neq 0$, we only have show that the hermitian matrices $M_{j}$ are invertible for all $j=1, \cdots , \ell$.
For fixed $j$, let us consider $\Sigma_{2}$ the space spanned by $\Upsilon_{k_{j,i}}= e^{-ik_{j,i}}$, $i=0,1,2$. Let $\rho_{k_{j , \ell}}$ be the projection of $GG(\Upsilon_{k_{j, \ell}})$ onto the space $\Sigma_{2}$, that is, 
\begin{equation*}
\rho_{k_{j ,\ell}} = \sum_{i=0}^{2}M_{k_{j, \ell} k_{j, i}} \Upsilon_{k_{j, i}}.
\end{equation*}
Now, it suffices to show that $\rho_{k_{j, \ell}}$, $\ell = 0,1, 2$, is a linearly independent subset of $\Sigma_{2}$. Assume that there exist scalars $\lambda_{\ell}$, $\ell =0, 1, 2$, such that
$$
\sum_{\ell =0}^{2} \lambda_{\ell} \rho_{k_{j , \ell}}(x) = 0 \iff \sum_{\ell , i =0}^{2} \lambda_{\ell} M_{k_{j, \ell}, k_{j, i}} \Upsilon_{k_{j, i}}(x) =0
$$
Then, it yields that 
\begin{equation*}
\sum_{i = 0}^{2} \sum_{\ell =0}^{2} \left< \lambda_{\ell} G\Upsilon_{k_{j, \ell}}, G \Upsilon_{k_{j, i}}\right> \Upsilon_{k_{j,i}}= \sum_{i =0}^{2} \left< GG \left( \sum_{\ell =0}^{2} \lambda_{\ell} \Upsilon_{k_{j, \ell}} \right) , \Upsilon_{k_{j, i}} \right>\Upsilon_{k_{j, i}}=0
\end{equation*}
Since $\Upsilon_{k_{j, i}}$ is a basis of $\Sigma_{2}$, follows that
$$
\left< GG \left( \sum_{\ell =0}^{2} \lambda_{\ell} \Gamma_{k_{j, \ell}} \right) , \Gamma_{k_{j, i}} \right> =0,
$$
for each $i=0,1,2$. As consequence of the last equality, we get
$$
0=\left< GG \left( \sum_{\ell =0}^{2} \lambda_{\ell} 
 \Upsilon_{k_{j, \ell}} \right) ,  \sum_{i=0}^{2} \lambda_{i}\Upsilon_{k_{j, i}} \right> \iff \sum_{\ell =0}^{2} \lambda_{\ell} \Upsilon_{k_{j, \ell}}  =0 \iff \lambda_{\ell} = 0,
$$
for $\ell = 0,1,2$, showing the Claim 1.

\vspace{0.2cm}

\noindent\textbf{Claim 2.} The functions $f$ and $h$ defined by \eqref{defcontrol} and \eqref{defcoefcontrol} belongs to $L^{2}([0,T]; H_{0}^{s}(\mathbb{T}))$ provided that $(u_{0}, v_{0}), (u_{1},v_{1}) \in H_{0}^{s}(\mathbb{T}) \times H_{0}^{s}(\mathbb{T})$.

\vspace{0.2cm}

In fact, let us write $G\phi_{j}(x)$ and $G\psi_{j}(x)$ as follows
\begin{equation}\label{defcontrolcoef2}
G\phi_{j}(x) = \sum_{k \in \mathbb{Z}} a_{jk}\phi_{k} \ \  \mbox{and} \ \ G\psi_{j}(x) = \sum_{k \in \mathbb{Z}} b_{jk}\psi_{k},
\end{equation}
where
\begin{equation*}
a_{jk} = \int_{\mathbb{T}} G\phi_{j} \overline{\phi_{k}(x)} dx \ \ \mbox{and} \ \ b_{jk} = \int_{\mathbb{T}} G\psi_{j} \overline{\psi_{k}(x)} dx, \ k \in \mathbb{Z}.
\end{equation*}
Therefore, we can see that
\begin{equation*}
f(x,t) = \sum_{j \in \mathbb{Z}}\sum_{k \in \mathbb{Z}}f_{j}a_{jk}q_{j}(t)\phi_{k}(x)\quad \text{and} \quad  h(x,t) = \sum_{j \in \mathbb{Z}}\sum_{k \in \mathbb{Z}}h_{j}b_{jk}q_{j}(t)\psi_{k}(x).
\end{equation*}
Consequently, this yields that
\begin{equation*}
\|f\|_{L^{2}([0,T]; H_{0}^{s}(\mathbb{T}))}^{2} = \int_{0}^{T} \sum_{k \in \mathbb{Z}} (1 + |k|)^{2s} \left| \sum_{j \in \mathbb{Z}} a_{jk} f_{j} q_{j}(t) \right|^{2} dt = \sum_{k \in \mathbb{Z}} (1 + |k|)^{2s} \int_{0}^{T}\left| \sum_{j \in \mathbb{Z}} a_{jk} f_{j} q_{j}(t) \right|^{2} dt.
\end{equation*}
As $\{q_{k}\}_{k \in \mathbb{K}}$ is a Bessel sequence and $\mathbb{Z} \setminus \mathbb{K}$ is a finite set, from the previous identity holds that
\begin{equation}\label{f_control}
\|f\|_{L^{2}([0,T]; H_{0}^{s}(\mathbb{T}))} \leq c \sum_{j \in \mathbb{Z}}|f_{j}|^{2} \sum_{k \in \mathbb{Z}} (1 + |k|)^{2s}|a_{jk}|^{2}.
\end{equation}
Analogously, we can obtain the following estimate for $h$, that is, 
\begin{equation}\label{g_control}
\|h\|_{L^{2}([0,T]; H_{0}^{s}(\mathbb{T}))} \leq c \sum_{j \in \mathbb{Z}}|h_{j}|^{2} \sum_{k \in \mathbb{Z}} (1 + |k|)^{2s}|b_{jk}|^{2}.
\end{equation}

To finish the proof of Claim 2, let us prove that the right hand side of \eqref{f_control} and \eqref{g_control} are bounded. For this, note that
\begin{equation*}
\begin{split}
|a_{jk}| =  \left|\left< G\phi_{j}(x), \phi_{k}(x)\right> \right|
%=&  \left| \left< g(x)\phi_{j}(x) -  g(x) \int_{\mathbb{T}}g(y)\phi_{j}(y)dy, \phi_{k}(x)\right> \right|\\
%= & \left| \int_{\mathbb{T}} \sum_{m \in \mathbb{Z}} \left< g, \phi_{m} \right>\phi_{m}(x) \phi_{j}(x)\overline{\phi_{k}(x)} dx  - \int_{\mathbb{T}}  \sum_{m \in \mathbb{Z}} \left< g, \phi_{m} \right>\phi_{m}(x)  \overline{ \phi_{k}(x)} dx \int_{\mathbb{T}}  \sum_{m \in \mathbb{Z}} \left< g, \phi_{m} \right>\phi_{m}(y)\phi_{j}(y)dy  \right|\\
%= & \left|\frac{1}{\sqrt{2 \pi}}\sigma_{k-j} \sigma_{j}\sigma_{k}\left<g, \phi_{k-j} \right> - \sigma_{k}\sigma_{j} \left< g, \phi_{k}\right>\left< g, \phi_{j}\right>\right|
 \leq   \frac{1}{\sqrt{2\pi}}|\sigma_{k-j}||\sigma_{j}||\sigma_{k}||\left< g, \phi_{k-j} \right>| +|\sigma_{k}||\sigma_{j}||\left<  g, \phi_{k}\right> ||\left< g, \phi_{j}\right>|
\end{split}
\end{equation*}
and, in a similar way,
\begin{equation*}
|b_{jk}| \leq  \frac{1}{\sqrt{2\pi}}|\tau_{k-j}||\tau_{j}||\tau_{k}||\left< g , \psi_{k-j}\right> | +|\tau_{k}||\tau_{j}|| \left< g, \psi_{k} \right> ||\left< g, \psi_{j}\right>|.
\end{equation*}
Hence,
\begin{equation*}
|a_{jk}|^{2}  \leq 2|\sigma_{j}|^{2}| (|\sigma_{k-j}|^{2}|\sigma_{k}|^{2}|\left< g , \phi_{k-j} \right> |^{2} + |\sigma_{k}|^{2} |\left< g, \phi_{k}\right> |^{2}|\left<g, \phi_{j} \right>|^{2}  )
\end{equation*}
and
\begin{equation*}
|b_{jk}|^{2} \leq 2 |\tau_{j}|^{2}(|\tau_{k-j}|^{2}|\tau_{k}|^{2}|\left< g, \psi_{k-j}\right>|^{2} +  |\tau_{k}|^{2}|\left<g, \psi_{k}\right>|^{2} |\left<g, \psi_{j}\right>|^{2}). 
\end{equation*}
Using the last inequalities we can estimate
\begin{equation*}
\begin{split}
\sum_{k \in \mathbb{Z}} (1 + |k|)^{2s}|a_{jk}|^{2} 
%\leq & 2 |\sigma_{j}|^{2} \left[\sum_{k \in \mathbb{Z}} (1 + |k|)^{2s}|\sigma_{k-j}|^{2}|\sigma_{k}|^{2}|\left< g, \phi_{k-j}\right>|^{2} +  \sum_{k \in \mathbb{Z}} (1 + |k|)^{2s} |\sigma_{k}|^{2} |\left<g, \phi_{k} \right>|^{2}|\left<g, \phi_{j}\right>|^{2} \right]\\
%\leq & 2|\sigma_{j}|^{2}\left[ \sum_{k \in \mathbb{Z}} (1 + |k + j|)^{2s}|g_{k}|^{2} +  \sum_{k \in \mathbb{Z}} (1 + |k|)^{2s}  |g_{k}|^{2}|g_{j}|^{2} \ \right]\\
\leq & 2 |\sigma_{j}|^{2} \left[ (1 + |j|)^{2s}\sum_{k \in \mathbb{Z}} (1 + |k|)^{2s}|\left< g, \phi_{k}\right>|^{2} +  |\left< g. \phi_{j} \right>|^{2}\sum_{k \in \mathbb{Z}} (1 + |k|)^{2s}  |\left<g, \phi_{k}\right>|^{2}\right]\\
 \leq & 2 |\sigma_{j}|^{2}\left[(1 + |j|)^{2s} + |\left<g, \phi_{j}\right>|^{2}\right]\|g\|_{s}^{2},
\end{split}
\end{equation*}
and analogously, we have
\begin{equation*}
\sum_{k \in \mathbb{Z}} (1 + |k|)^{2s}|b_{jk}|^{2} \leq  2|\tau_{j}|^{2} \left[(1 + |j|)^{2s} + |\left<g, \psi_{j}\right>|^{2}\right]\|g\|_{s}^{2}.
\end{equation*}
Therefore, \eqref{f_control} and \eqref{g_control} together the previous inequality results
\begin{equation*}
\begin{split}
\|f\|_{L^{2}([0,T]; H_{0}^{s}(\mathbb{T}))}^{2}
%\leq & 2 \sum_{j \in \mathbb{Z} }|f_{j}|^{2} |\sigma_{j}|^{2}[(1 + |j|)^{2s} + |\left< g, \phi_{j}\right> |^{2}]\|g\|_{s}^{2} \\
%\leq & 2C_{0} \sum_{j \in \mathbb{Z} } \frac{|u_{j,1}e^{i\omega_{j}T}- u_{j,0}|^{2}}{|\sigma_{j}|^{4}|\beta_{j}|^{2}}|\sigma_{j}|^{2}[(1 + |j|)^{2s} + |\left< g_, \phi_{j}\right>|^{2}]\|g\|_{H^{s}(\mathbb{T})}^{2}\\
\leq & 2C_{0} \sum_{j \in \mathbb{Z} } \frac{|u_{j,1}e^{i\omega_{j}T}- u_{j,0}|^{2}}{|\sigma_{j}|^{2}|\beta_{j}|^{2}}[(1 + |j|)^{2s} + |\left< g, \phi_{j}\right> |^{2}]\|g\|_{s}^{2},
\end{split}
\end{equation*}
where $C_{0} = \max_{j = 1, \cdots , n} \{ 1, \|M_{j}^{-1}\|^{2}\}$ and  $\|M_{j}^{-1}\|$ denote the Euclidean norms of the Matrices $M_{j}^{-1}$. An analogous inequality is obtained for $\|h\|_{L^{2}([0,T]; H_{0}^{s}(\mathbb{T}))}^{2}$. Putting all these inequalities together and using the relation \eqref{boundeddelta}, we get
\begin{equation*}
\begin{split}
\|(f, h)\|_{L^{2}([0,T]; H_{0}^{s}(\mathbb{T})) \times L^{2}([0,T]; H_{0}^{s}(\mathbb{T}))}^{2} 
 \leq & \ C_0 \delta^{-2}\|g\|_{H_{0}^{s}(\mathbb{T})}^{2}  \sum_{j \in \mathbb{Z} } (1 + |j|)^{2s} \frac{|\sigma_{j}|^{2}( |\widetilde{u_{j,1} }|^{2} + |\widetilde{u_{j,0}}|^{2})}{|\sigma_{j}|^{2}}\\
 & + C_0  \delta^{-2}\|g\|_{H_{0}^{s}(\mathbb{T})}^{2}  \sum_{j \in \mathbb{Z} } (1 + |j|)^{2s} \frac{|\tau_{j}|^{2}( |\widetilde{v_{j,1}} |^{2} + |\widetilde{v_{j,0}}|^{2})}{|\tau_{j}|^{2}},
  \end{split}
\end{equation*}
where $\widetilde{u_{j,i}}$ and $\widetilde{v_{j,i}}$ denote the Fourier coefficients with respect to the orthonormal base $\left\{ \frac{e^{-ijx}}{\sqrt{2\pi}}\right\}_{j \in \mathbb{Z}}$.
So,
$$
\|(f, h)\|_{L^{2}([0,T]; H_{0}^{s}(\mathbb{T})) \times L^{2}([0,T]; H_{0}^{s}(\mathbb{T}))}^{2} \leq K_{0} \delta^{-2}
   \|g\|_{H_{0}^{s}(\mathbb{T})}^{2} (\|(u_{0}, v_{0})\|_{H_{0}^{s}( \mathbb{T}) \times H_{0}^{s}(\mathbb{T})}^{2} +  \|(u_{1}, v_{1})\|_{H_{0}^{s}( \mathbb{T}) \times H_{0}^{s}(\mathbb{T})}^{2})
$$
completing the proof of Claim 2 and showing Theorem \ref{main_A}.
\end{proof}

As a consequence of Theorem \ref{main_A} we have the next result, which will be important to extend the result to the nonlinear system.

\begin{corollary}\label{corcontrollocal}
Equations \eqref{defcontrol}, \eqref{defcoefcontrol} and \eqref{defcontrolcoef2} define, for $s \geq 0$, two bounded operators $\Phi(u_{0}, v_{0}) = f$ and $\Psi(u_{1}, v_{1}) = h$
from $H_{0}^{s}(\mathbb{T}) \times H_{0}^{s}(\mathbb{T})$ to $L^{2}([0,T]; H_{0}^{s}(\mathbb{T}))$ such that
\begin{equation*}
S(T)(u_{0},v_0) + \int_{0}^{T}S(T -\tau) (G\Phi(u_{0}, u_{1}),G\Psi(v_{0}, v_{1}))(\cdot , \tau) d\tau = (u_{1},v_1),
\end{equation*}
for any $(u_{0}, u_{1})$, $(v_{0}, v_{1}) \in H_{0}^{s}(\mathbb{T}) \times H_{0}^{s}(\mathbb{T})$. Moreover, there exists a constant $C_{T,g}:=C(T,g)$ such that the following inequality is verified
\begin{equation*}
\|(\Phi (u_{0}, u_{1}), \Psi (v_{0}, v_{1}))\|_{[L^{2}([0,T]; H_{0}^{s}(\mathbb{T}))]^2} \leq C_{T,g} \left(\|(u_{0}, v_{0})\|_{s} +  \|(u_{1}, v_{1})\|_{s}\right).
\end{equation*}
\end{corollary}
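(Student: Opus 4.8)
The plan is to recognize that Corollary~\ref{corcontrollocal} merely repackages, in operator form, the explicit construction already carried out in the proof of Theorem~\ref{main_A}; hence almost no new work is required beyond checking linearity, extracting a \emph{uniform} bound, and rewriting the controllability conclusion through Duhamel's formula. First I would verify that $\Phi$ and $\Psi$ are linear. By \eqref{defexpansiondatas} the coefficients $u_{k,j}=\langle u_j,\phi_k\rangle$ and $v_{k,j}=\langle v_j,\psi_k\rangle$ are linear functionals of the data, and for every index $k$ outside the finite exceptional set $\bigcup_{j}\mathbb{K}_{j}$ the coefficients $f_k,h_k$ are given by the linear formula \eqref{defcoefcontrol}; for the finitely many exceptional indices, Claim~1 shows the associated Hermitian matrices $M_j$ are invertible, so the triples $(f_{k_{j,0}},f_{k_{j,1}},f_{k_{j,2}})$ and $(h_{k_{j,0}},h_{k_{j,1}},h_{k_{j,2}})$ are obtained by applying $M_j^{-1}$ to right-hand sides that are themselves linear in the data. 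Feeding these coefficients into \eqref{defcontrol}, together with the expansions \eqref{defcontrolcoef2}, exhibits $f=\Phi(u_0,u_1)$ and $h=\Psi(v_0,v_1)$ as linear images of the data, so $\Phi$ and $\Psi$ are linear maps on $H_0^s(\mathbb{T})\times H_0^s(\mathbb{T})$.

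Second I would record the boundedness, which is exactly the final estimate obtained in the proof of Theorem~\ref{main_A}: using the lower bound \eqref{boundeddelta}, namely $|\beta_k|>\delta$, the Bessel property of $\{q_k\}_{k\in\mathbb{K}}$, and the invertibility constants $\|M_j^{-1}\|$ gathered into $C_0$, one arrives at
\begin{equation*}
\|(f,h)\|_{[L^{2}([0,T];H_{0}^{s}(\mathbb{T}))]^2}^{2}\le K_0\,\delta^{-2}\,\|g\|_{H_0^s(\mathbb{T})}^2\left(\|(u_0,v_0)\|_s^2+\|(u_1,v_1)\|_s^2\right).
\end{equation*}
Taking square roots and using $\sqrt{a^2+b^2}\le a+b$ for $a,b\ge 0$ yields the asserted inequality with $C_{T,g}:=\big(K_0\delta^{-2}\big)^{1/2}\|g\|_{H_0^s(\mathbb{T})}$, a constant depending only on $T$ and $g$ (the dependence on $T$ entering through the biorthogonal family $\{q_k\}$); in particular $\Phi$ and $\Psi$ are bounded.

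Finally I would establish the steering identity. Since $L$ generates the $C_0$-group $S(t)$ on $L^2(\mathbb{T})\times L^2(\mathbb{T})$ (Proposition~\ref{propertiesL}) and $G$ maps $L^2(0,T;H_0^s(\mathbb{T}))$ boundedly into itself (Remark~\ref{Gproperties}), Claim~2 guarantees that $(Gf,Gh)$ lies in $L^2(0,T;H_0^s(\mathbb{T})\times H_0^s(\mathbb{T}))$, so the unique mild solution of \eqref{openloopsystem} with data $(u_0,v_0)$ is given by the variation-of-parameters formula
\begin{equation*}
(u(t),v(t))=S(t)(u_0,v_0)+\int_0^t S(t-\tau)\big(G\Phi(u_0,u_1),G\Psi(v_0,v_1)\big)(\cdot,\tau)\,d\tau.
\end{equation*}
The duality computation \eqref{idcontrol1}, which is precisely the weak form of this representation tested against the eigenfunctions $e^{-i\omega_k t}\phi_k$ and $e^{-i\omega_k t}\psi_k$, shows that our choice of coefficients forces $(u(T),v(T))=(u_1,v_1)$; evaluating the displayed formula at $t=T$ then gives exactly the identity claimed in the statement. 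I expect the only genuinely delicate point to be the bookkeeping over the finite exceptional block $\bigcup_{j}\mathbb{K}_{j}$: one must check that inverting the matrices $M_j$ preserves linearity and contributes only a finite, data-independent factor $C_0$ to the norm, so that the uniform bound above --- and hence the boundedness of $\Phi$ and $\Psi$ --- is not spoiled by these finitely many resonant modes.
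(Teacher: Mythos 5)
Your proposal is correct and follows essentially the same route as the paper, which in fact states Corollary \ref{corcontrollocal} without a separate proof precisely because all three ingredients — linearity of the coefficient formulas \eqref{defcoefcontrol} (including the finitely many resonant blocks handled by inverting the matrices $M_j$), the uniform bound from Claim 2 of the proof of Theorem \ref{main_A}, and the identification of the moment-method solution with the Duhamel representation — are already contained in that proof. Your only additions (taking square roots of the final estimate and noting that the $T$-dependence enters through the biorthogonal family $\{q_k\}$) are routine and accurate.
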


\section{Well-posedness theory in Bourgain spaces}\label{Sec3}
Is well known that the Bourgain in \cite{Bourgain} discovered a subtle smoothing property of solutions of the KdV equation posed on Torus, thanks to which he was able to show that the KdV equation is well-posed in the space $H^s(\mathbb{T})$, for any $s\geq0$.  In this section we will present the smoothing properties to the IVP \eqref{openloopsystem_int}, considering $f=g=0$, which are the key to prove the global control results in this manuscript, using the techniques introduced by Bourgain.

\subsection{Fourier restriction space} Observe that  the IVP \eqref{openloopsystem_int}, with $f=g=0$,  can be rewrite as
\begin{equation*}
\left\{\begin{array}{l}\left(\begin{array}{l}u_{t} \\ v_{t}\end{array}\right)+\left(\begin{array}{cc}1 & 0 \\ 0 & \alpha\end{array}\right)\left(\begin{array}{l}u_{x x x} \\ v_{x x x}\end{array}\right)+\left(\begin{array}{cc}\mu & \eta \\ \eta & \zeta\end{array}\right)\left(\begin{array}{l}u_{x} \\ v_{x}\end{array}\right)=\left(\begin{array}{l}0\\ 0\end{array}\right), \quad x \in \mathrm{T}, t \in \mathbb{R}, \\ \left.\left(\begin{array}{l}u \\ v\end{array}\right)\right|_{t=0}=\left(\begin{array}{l}u_{0} \\ v_{0}\end{array}\right) \in H^{s}\left(\mathbb{T}\right) \times H^{s}\left(\mathbb{T}\right).\end{array}\right.
\end{equation*}
To find an appropriate way to define the $X_{s,b}$ for the targeted system \eqref{openloopsystem_int}, taking into account that$ f=g=0$,  consider the following equivalent system 
\begin{equation}\label{linearsystem1a}
\begin{cases}
\partial_{t} w+ \beta \partial_{x}^{3} w+ \gamma \partial_{x} w=0,&  \ x \in \mathbb{T},\ t \in \mathbb{R},\\
w(0)=w_{0}, & \in H^{s}\left(\mathbb{T}\right).
\end{cases}
\end{equation}
The solution to \eqref{linearsystem1a} is given explicitly by
\begin{equation}\label{group_n}
w(x, t)=\sum_{k \in\mathbb{Z}} e^{i k x} e^{i \phi^{\beta, \gamma}(k) t} \widehat{w_{0}}(k):=S^{\beta, \gamma}(t) w_{0}
\end{equation}
with
$$
\phi^{\beta, \gamma}(k):=\beta k^{3}-\gamma k.
$$
For convenience, $\phi^{1, 0}$ will be written as $\phi$.

\begin{remark}\label{controolsistemdiagonal}
With the notation \eqref{group_n}, note that when we put $\eta =0$ in \eqref{operatorL}, the operator $L$  remains an infinitesimal generator of a strongly continuous group of contraction on $L^{2}(\mathbb{T}) \times L^{2}(\mathbb{T})$ which is given by 
\begin{equation*}
S(t) = e^{-tL} = 
\left(\begin{matrix} e^{-t(\partial_{x}^{3} + \mu \partial_{x})} & 0 \\ 0 & e^{-t(\alpha \partial_{x}^{3} + \zeta \partial_{x})}  \end{matrix}\right)
\end{equation*}
Hence, $S(t)(u_{0}, v_{0}) = (S^{1, \mu}(t)u_{0}, S^{\alpha, \zeta(t)}v_{0})$. In this way, the Corollary \ref{corcontrollocal} also is obtained for $\eta =0$.
\end{remark}

\begin{definition}\footnote{We infer for more details the two references \cite{Bourgain,Kenig}.} For any $ \beta, \gamma, s, b  \in \mathbb{R}$, the Fourier restriction space $X_{s, b}^{\beta , \gamma}$ is defined to be the completion of the Schwartz space $\mathcal{S}\left(\mathbb{T} \times \mathbb{R}\right)$ with respect to the norm
$$
\|w\|_{X_{s, b}^{\beta , \gamma}}:=\left\|\langle k\rangle^{s}\left\langle\tau-\phi^{\beta , \gamma}(k)\right\rangle^{b} \widetilde{w}(k, \tau)\right\|_{\ell^{2}(\mathbb{Z})L^{2}(\mathbb{R})},
$$
where $\widetilde{v}$ refers to the space-time Fourier transform of $v$. In addition, for any $T>0$, $$X_{s, b}^{\beta}([0, T]):=X_{s, b}^{\beta , \gamma ,T}$$ denotes the restriction of $X_{s, b}^{\beta , \gamma}$ on the domain $\mathbb{T} \times[0, T]$ which is a Banach space when equipped with the usual quotient norm.
\end{definition}
As well known (see e. g. \cite{Kenig}), for the periodic KdV equation, one needs to take $b=\frac{1}{2}$. But, this space barely fails to be in $C\left(\mathbb{R}_{t} ; H_{x}^{s}\right)$. To ensure the continuity of the time flow of the solution, will be used the norm $Y_{s,b}^{ \beta , \gamma}$ given by
$$
\|w\|_{Y_{s,b}^{\beta , \gamma }}=\left\|\langle k\rangle^{s} \langle \tau - \phi_{\beta, \gamma} \rangle^{b}\widetilde{w}(k, \tau)\right\|_{\ell^{2}(\mathbb{Z}) L^{1}(\mathbb{R})}
$$
and  the companion spaces will be defined as 
\begin{equation*}
Z^{\beta , \gamma}_{s,b} = X^{\beta , \gamma}_{s,b} \cap Y^{\beta , \gamma}_{s, b - \frac{1}{2}}, \quad b, s \in \mathbb{R},
\end{equation*}
endowed  with the norm
\begin{equation*}
\|w\|_{Z^{\beta , \gamma}_{s,b}} = \|w\|_{X^{\beta , \gamma}_{s, b}} + \|w\|_{Y^{\beta , \gamma}_{s, b - \frac{1}{2}}}.
\end{equation*}
Due the fact that the second term $\left\|\langle k\rangle^{s} \widehat{w}(k, \tau)\right\|_{\ell^{2}\left(\mathbb{Z}\right) L^{1}(\mathbb{R})}$  has already dominated the $L_{t}^{\infty} H_{x}^{s}$ norm of $v,$ it follows that $Z_{s, \frac{1}{2}}^{\beta, \gamma} \subset C\left(\mathbb{R}_{t} ; H_{x}^{s}\right)$ continuously. Lastly, the spaces $$Z_{s, b}^{\beta, \gamma}([0, T]):=Z_{s, b}^{\beta , \gamma ,T}$$ denotes the restriction of $Z_{s, b}^{\beta , \gamma}$ on the domain $\mathbb{T} \times[0, T]$ which is a Banach space when equipped with the usual quotient norm.
\begin{remark}
When $b=-\frac{1}{2}$, the companion spaces $ Z_{s, -\frac{1}{2}}^{\beta , \gamma}$  \textit{via} the norm previously defined is so introduced to control the $Z^{\beta , \gamma}_{s, \frac{1}{2}}$-norm of the integral term from the Duhamel principle (see Lemma \ref{bourgainkdv-estimates})
$$
\|w\|_{Z_{s}^{\beta , \gamma}}=\|w\|_{X_{s,-\frac{1}{2}}^{ \beta , \gamma}}+\left\|\frac{\langle k\rangle^{s} \widehat{w}(k, \tau)}{\left\langle\tau-\phi^{ \beta , \gamma}(k)\right\rangle}\right\|_{\ell^{2}\left(\mathbb{Z}\right) L^{1}(\mathbb{R})}.
$$
\end{remark}

\subsection{Linear and nonlinear estimates}
To obtain global well-posedness result for the system \eqref{systema}, with $p=q=0$, we will need some estimates related with linear and nonlinear IVP associated to this system.  Let us first recall some classic results in the literature for dispersive systems.
\begin{lemma}\label{bourgainkdv-estimates}\footnote{For details about this lemma the authors suggest the following references \cite{collianderetal2003,Tao,Zhang}.}
Let $s$, $b \in \mathbb{R}$ and $T > 0$ be given. There exists a constant $C_{0} > 0$ such that:
\begin{itemize}
    \item [(i)] For any $w \in H^{s}(\mathbb{T})$, 
    \begin{equation*}
    \begin{split}
    \|S(t)^{\beta, \gamma} w \|_{X^{\beta , \gamma ,T}_{s,b}} \leq C_{0} \|w\|_{s};\\
 \|S(t)^{\beta, \gamma} w \|_{Z^{\beta, \gamma ,T}_{s,b}} \leq C_{0} \|w\|_{s};\\
    \end{split}
    \end{equation*}
    \item [(ii)] For any $f \in X^{\beta , \gamma}_{s, b-1}$,
    \begin{equation*}
       \displaystyle  \left\| \int_{0}^{t} S^{\beta , \gamma}(t - \tau) f(\tau) d\tau \right\|_{X^{\beta , \gamma,T}_{s,b}} \leq C_{0}\|f\|_{X^{\beta , \gamma ,T}_{s, b-1}}
    \end{equation*}
    provided that $b > \frac{1}{2}$;
    \item [(iii)] For any $f \in Z_{s, - \frac{1}{2}}^{\beta,T}$,
    \begin{equation*}
      \displaystyle  \left\| \int_{0}^{t} S^{\beta, \gamma} (t - \tau)f(\tau) d\tau \right\|_{Y_{s}^{\beta, \gamma , T}} \leq C_{0} \|f\|_{Z_{s}^{\beta, \gamma , T}}.
    \end{equation*}
\end{itemize}
\end{lemma}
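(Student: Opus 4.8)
The plan is to observe that all three estimates are \emph{insensitive to the precise form of the dispersion relation}: the only structural feature of $\phi^{\beta,\gamma}(k)=\beta k^{3}-\gamma k$ that enters is that it is a real-valued function of $k\in\mathbb Z$. Consequently each scalar flow $S^{\beta,\gamma}(t)$ attached to \eqref{linearsystem1a} behaves exactly as the free KdV group does in its own Bourgain space, and the arguments in the cited references \cite{collianderetal2003,Tao,Zhang} transfer verbatim once $\phi$ is replaced by $\phi^{\beta,\gamma}$. Thus I would prove the three items for a generic real symbol $\phi^{\beta,\gamma}$ and not exploit any cancellation specific to the cubic-plus-linear shape.

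For item (i), the first step is to introduce a smooth temporal cutoff $\psi\in\mathcal S(\mathbb R)$ with $\psi\equiv1$ on $[0,T]$ and to estimate $\psi(t)S^{\beta,\gamma}(t)w$, after which the restriction (quotient) norm on $\mathbb T\times[0,T]$ is dominated by the full-line norm. Since the space-time Fourier transform of $\psi S^{\beta,\gamma}(\cdot)w$ equals $\widehat\psi\bigl(\tau-\phi^{\beta,\gamma}(k)\bigr)\widehat w(k)$, the change of variables $\tau\mapsto\tau+\phi^{\beta,\gamma}(k)$ decouples the weight $\langle\tau-\phi^{\beta,\gamma}(k)\rangle^{b}$ from $k$, giving
\begin{equation*}
\|\psi S^{\beta,\gamma}(\cdot)w\|_{X_{s,b}^{\beta,\gamma}}^{2}=\Bigl(\int_{\mathbb R}\langle\tau\rangle^{2b}|\widehat\psi(\tau)|^{2}\,d\tau\Bigr)\sum_{k}\langle k\rangle^{2s}|\widehat w(k)|^{2}.
\end{equation*}
The $\tau$-integral is finite by the rapid decay of $\widehat\psi$, which yields the $X$-bound; the $Y$-component of the $Z$-norm is handled identically, replacing the $L^{2}$ integral by the finite quantity $\int_{\mathbb R}\langle\tau\rangle^{b-1/2}|\widehat\psi(\tau)|\,d\tau$, so that $Z_{s,b}^{\beta,\gamma,T}\subset C([0,T];H^{s})$ follows as claimed.

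For items (ii) and (iii) the device is to conjugate by the group: writing $u(t)=\int_{0}^{t}S^{\beta,\gamma}(t-\tau)f(\tau)\,d\tau$ and $g(\tau)=S^{\beta,\gamma}(-\tau)f(\tau)$, the $X_{s,b}^{\beta,\gamma}$-norm of $u$ is comparable to the mixed norm $H_{t}^{b}H_{x}^{s}$ of $\int_{0}^{t}g$, and $\|f\|_{X_{s,b-1}^{\beta,\gamma}}=\|g\|_{H_{t}^{b-1}H_{x}^{s}}$. This reduces (ii) to the purely temporal inequality $\|\psi(t)\int_{0}^{t}g\|_{H^{b}_t}\lesssim\|g\|_{H^{b-1}_t}$ valid for $\tfrac12<b\le1$, where the gain of one temporal derivative compensates the loss $b-1$. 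The restriction $b>\tfrac12$ is genuinely needed here: at the endpoint $b=\tfrac12$ the integration-against-the-characteristic-function structure of $\int_{0}^{t}$ is no longer bounded, and this failure is precisely what forces the introduction of the companion spaces. Item (iii) is therefore the delicate endpoint, and I expect it to be the main obstacle: one splits the Duhamel integrand by the size of the modulation $\langle\tau-\phi^{\beta,\gamma}(k)\rangle$, controlling the high-modulation piece with the $X_{s,-1/2}^{\beta,\gamma}$ part of the $Z$-norm and the resonant low-modulation piece with the $\ell^{2}(\mathbb Z)L^{1}(\mathbb R)$ correction term, exactly as in the scalar KdV endpoint analysis. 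Once this temporal endpoint lemma is in hand, undoing the group conjugation delivers the bound into $Y_{s}^{\beta,\gamma,T}$ and completes the proof.
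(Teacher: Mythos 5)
The paper does not actually prove this lemma --- it is quoted from \cite{collianderetal2003,Tao,Zhang} via the footnote --- and your argument is precisely the standard one those references supply: the time-cutoff and change-of-variables computation for (i), conjugation by the group reducing (ii) to the one-dimensional estimate $\bigl\|\psi\int_0^t g\bigr\|_{H^b_t}\lesssim\|g\|_{H^{b-1}_t}$ for $b>\tfrac12$, and the high/low modulation splitting with the $\ell^2(\mathbb Z)L^1(\mathbb R)$ companion norm at the endpoint for (iii). Your opening observation that only the real-valuedness of $\phi^{\beta,\gamma}$ enters is also exactly why the scalar KdV proofs transfer verbatim here; the cubic-plus-linear shape of the symbol only becomes relevant later, in Lemmas \ref{derivativeestimate} and \ref{bilinearestlemma}.
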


Observe that the Bourgain spaces associated to \eqref{linearsystem1a} will be $X_{s, b}^{1, \mu}$ and $X_{s, b}^{\alpha, \zeta}$ ($Z_{s}^{1, \mu}$ and $Z_{s}^{\alpha, \zeta}$, respectively). In our case, it is important to see that   $\sup_{k \in \mathbb{Z}}|\phi^{\mu} - \phi^{\alpha , \zeta}| = \infty$, which results that the norms  $\|\cdot\|_{X_{s, b}^{1, \mu}}$ and $\|\cdot\|_{X_{s, b}^{\alpha, \zeta}}$  never will be equivalent (see for instance, \cite[Remark 1.1]{axeltheses}). To overcome this difficulty we need appropriate lemmas introduced first by T.  Oh in \cite{Oh} and,  more recently,  by Yang and Zhang in \cite{Zhang}.  Consider $X_{s, b}^{\beta_{i}, \gamma_{i}}$ for $\beta_{i}$ and $\gamma_{i}$, $i=1$ and $2$.  So, we present a lemma proved in \cite[Lemma 3.10]{Zhang} for the case $b= \frac{1}{2}$, here, we are able to extend the result for $b\in(\frac{1}{3},\frac{1}{2}]$.
\begin{lemma}\label{derivativeestimate}
Let $\beta_{1} \neq \beta_{2}$, $s \in \mathbb{R}$, $ \frac{1}{3} < b \leq \frac{1}{2}$ and $0 < T < 1$. There exist constants $\epsilon = \epsilon(\beta_{1}, \beta_{2})$, $C_{1}= C_{1}(\beta_{1}, \beta_{2})$ and $\theta > 0$ such that for any $\gamma_{1}, \gamma_{2}$ with $|\gamma_{1}| + |\gamma_{2}| < \epsilon$
\begin{equation}\label{derivativenorm}
\left\| \partial_{x} w \right\|_{Z_{s, b-1}^{\beta_{2}, \gamma_{2} , T}} \leq C_{1} T^{\theta}\| w \|_{X_{s,b}^{ \beta_{1}, \gamma_{1}, T}}
\end{equation}
is verified for any $w \in X_{s,b}^{\beta_{1}, \gamma_{1}, T}$.
\end{lemma}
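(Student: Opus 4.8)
The plan is to reduce the claimed inequality to a pointwise estimate comparing the two dispersion relations, and then to recover the time gain $T^\theta$ from a standard localization argument. Write $\phi_i := \phi^{\beta_i,\gamma_i}$ and set $d(k) := \phi_1(k) - \phi_2(k) = (\beta_1-\beta_2)k^3 - (\gamma_1-\gamma_2)k$. Since $\beta_1 \neq \beta_2$, I would first choose $\epsilon = \epsilon(\beta_1,\beta_2) \in (0,\,|\beta_1-\beta_2|)$ so small that, whenever $|\gamma_1|+|\gamma_2| < \epsilon$, the quantity $(\beta_1-\beta_2)k^2 - (\gamma_1-\gamma_2)$ stays bounded away from zero for every nonzero integer $k$. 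This excludes any low-frequency cancellation between the cubic and linear parts of $d(k)$ and yields the uniform dispersion gap
\[
|d(k)| \gtrsim \langle k\rangle^3, \qquad k \in \mathbb{Z}\setminus\{0\}.
\]
At $k=0$ the factor $\partial_x$ annihilates the mode, so only $k\neq 0$ is relevant, and the spatial weight $\langle k\rangle^s$ is a spectator that commutes through the whole argument, which is why $s\in\mathbb{R}$ is arbitrary.

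With the gap in hand, the core step is the pointwise multiplier bound, for a $\delta>0$ to be fixed,
\[
\langle\tau-\phi_2(k)\rangle^{b-1}\,|k| \;\lesssim\; \langle\tau-\phi_1(k)\rangle^{\,b-\delta}, \qquad k\neq 0,
\]
which I would prove by splitting on the size of $|\tau-\phi_1(k)|$ relative to $|d(k)|$. If $|\tau-\phi_1(k)| \le \tfrac12|d(k)|$, then $|\tau-\phi_2(k)| \ge \tfrac12|d(k)| \gtrsim \langle k\rangle^3$, so (using $b-1<0$) the left side is $\lesssim \langle k\rangle^{3(b-1)}|k| = \langle k\rangle^{3b-2}$, which is bounded because $b\le\tfrac12$ forces $3b-2<0$. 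If instead $|\tau-\phi_1(k)| > \tfrac12|d(k)| \gtrsim \langle k\rangle^3$, then $\langle\tau-\phi_1(k)\rangle^{b-\delta} \gtrsim \langle k\rangle^{3(b-\delta)}$ while the left side is $\lesssim |k| \lesssim \langle k\rangle$, so the ratio is $\lesssim \langle k\rangle^{1-3b+3\delta}$, bounded provided $\delta \le b-\tfrac13$. Thus any $\delta\in(0,\,b-\tfrac13]$ is admissible, which is exactly where the hypothesis $b>\tfrac13$ enters. Inserting $\langle k\rangle^s$ and applying Plancherel gives the global (untruncated) estimate $\|\partial_x u\|_{X_{s,b-1}^{\beta_2,\gamma_2}} \lesssim \|u\|_{X_{s,b-\delta}^{\beta_1,\gamma_1}}$.

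To treat the $Y$-component of the $Z$-norm I would convert the $L^1_\tau$ norm to $L^2_\tau$ by Cauchy–Schwarz: since $b-\tfrac32 < -\tfrac12$, one has $\|g\|_{Y_{s,b-3/2}^{\beta_2,\gamma_2}} \lesssim \|g\|_{X_{s,b-1+\epsilon'}^{\beta_2,\gamma_2}}$ for small $\epsilon'>0$, and the same multiplier estimate (now with $b-1$ replaced by $b-1+\epsilon'$, which only forces a slight shrinking of $\delta$) controls the right-hand side. Finally, to produce the factor $T^\theta$ I would pass to the truncated spaces: fixing an extension of $w$ and a smooth temporal cutoff, the standard localization estimate $\|w\|_{X_{s,b-\delta}^{\beta_1,\gamma_1,T}} \lesssim T^{\delta}\|w\|_{X_{s,b}^{\beta_1,\gamma_1,T}}$, valid for $b-\delta<b\le\tfrac12$, delivers the claim with $\theta=\delta$; the endpoint $b=\tfrac12$ is precisely the case already covered in \cite{Zhang} and is handled by the usual $X_{s,1/2}$ variant of that localization estimate.

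The main obstacle is securing the uniform dispersion gap $|d(k)|\gtrsim\langle k\rangle^3$ for \emph{all} nonzero integers: for large $|k|$ it follows at once from $\beta_1\neq\beta_2$, but for small $|k|$ the linear term $(\gamma_1-\gamma_2)k$ could conspire to cancel the cubic term, and it is exactly to exclude this low-frequency resonance that the smallness threshold $\epsilon=\epsilon(\beta_1,\beta_2)$ on $|\gamma_1|+|\gamma_2|$ is imposed. Once that gap is in place the remaining work is bookkeeping, namely balancing the two exponent conditions $3b-2<0$ and $1-3b+3\delta\le0$, which together carve out the admissible range $\tfrac13<b\le\tfrac12$ and fix the size of the time gain $\theta=\delta$.
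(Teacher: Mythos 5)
Your proposal is correct and follows essentially the same route as the paper: the same smallness condition on $|\gamma_1|+|\gamma_2|$ to secure the gap $|\phi^{\beta_1,\gamma_1}(k)-\phi^{\beta_2,\gamma_2}(k)|\gtrsim\langle k\rangle^{3}$, the same multiplier bookkeeping where $b>\tfrac13$ and $b\le\tfrac12$ enter, the same Cauchy--Schwarz reduction of the $Y$-component to an $X$-norm, and the same recovery of $T^{\theta}$ from time localization after giving up $\delta$ in the modulation exponent. The only cosmetic difference is that you verify the pointwise multiplier bound by a two-case split and apply Plancherel directly, whereas the paper phrases it via duality and the product inequality $\langle\tau-\phi_1\rangle\langle\tau-\phi_2\rangle\ge|\phi_1-\phi_2|$; these are equivalent.
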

\begin{proof}
We must to prove that
\begin{equation}\label{norm1}
\left\| \partial_{x}w\right\|_{X_{s,b - 1}^{\beta_{2}, \gamma_{2}, T}} \leq C_{1} T^{\epsilon}\|w\|_{X_{s, b},}^{\beta_{1}, \gamma_{1}, T}
\  \ \mbox{and}  \ \ 
\left\| \partial_{x}w\right\|_{Y_{s,b-\frac{3}{2}}^{\beta_{2}, \gamma_{2} , T}} \leq C_{1} T^{\epsilon}\|w\|_{X_{s, b}^{\beta_{1}, \gamma_{1}, T} }.
\end{equation}
Thus, it is sufficient to show the following estimates
\begin{equation}\label{norm2}
\left\| \partial_{x}w\right\|_{X_{s,b-1}^{\beta_{2}, \gamma_{2}}} \leq C_{1} \|w\|_{X_{s, b^{-}}^{\beta_{1}, \gamma_{1}}}
\ \ \mbox{and}  \ \ 
\left\| \partial_{x}w\right\|_{Y_{s,b-\frac{3}{2}}^{\beta_{2}, \gamma_{2} , T}} \leq C_{1} \|w\|_{X_{s, b^{-} }^{\beta_{1}, \gamma_{1}} },
\end{equation}
here $b^{-}$ denote $b - \widetilde{\epsilon}$ for $\widetilde{\epsilon} \ll 1$.

We will start by showing that  the first inequality of \eqref{norm2} holds. Using duality approach and Plancherel theorem, we get that 
\begin{equation*}
\begin{split}
\|\partial_{x}w\|_{X_{s,  b-1}^{\beta_{2}, \gamma_{2}
}} = &\sup_{\|g\|_{X_{-s, 1- b}^{\beta_{2}, \gamma_{2}}} \leq 1}  \left|\sum_{k \in \mathbb{Z}} \int_{\mathbb{R}}ik  \widetilde{w}(k, \tau) \widetilde{g}(k, \tau) d\tau \right| 
=  \sup_{\|g\|_{X_{-s, 1- b}^{\beta_{2}, \gamma_{2}}} \leq 1} \left| \sum_{k \in \mathbb{Z}} \int_{\mathbb{R}} H(k, \tau ) \widetilde{W}(k, \tau ) \widetilde{G}(k, \tau)d \tau\right|,
\end{split}
\end{equation*}
where
$$
H(k, \tau ) = \frac{ik}{\left< \tau  - \phi^{\beta_{1}, \gamma_{1}}(k)\right>^{b^{-}} \left< \tau  - \phi^{\beta_{2}, \gamma_{2}}(k)\right>^{1- b}},
$$

$$
\widetilde{W}(k, \tau) = \left< k \right>^{s}\left< \tau  - \phi^{\beta_{1}, \gamma_{1}}(k)\right>^{b^{-}}\widetilde{w}(k, \tau)
$$
and
$$
\widetilde{G}(k, \tau) = \left< k \right>^{-s} \left< \tau -  \phi^{\beta_{2}, \gamma_{2}}(k)\right>^{1-b}\widetilde{g}(k, \tau).
$$
The following claim shows that the function $H(k, \tau  )$ is bounded.
\vspace{0.2cm}

\noindent\textbf{Claim:} For some constant $C_{1} > 0$, which depends only on $\beta_{1}, \beta_{2}$, we have that $$\sup_{(k , \tau) \in \mathbb{Z} \times \mathbb{R}}|H(k, \tau  )| \leq C_{1}.$$ 

\vspace{0.2cm}

In fact, if $|k| \leq 1$ is immediate. If $|k|>1$ note that
\begin{equation*}
\left< \tau  - \phi^{\beta_{1}, \gamma_{1}}(k)\right> \left< \tau  - \phi^{\beta_{2}, \gamma_{2}}(k)\right> \geq  \left| \phi^{\beta_{1}, \gamma_{1}}(k) - \phi_{\beta_{2}, \gamma_{2}(k)} \right|= | ( \beta_{1} - \beta_{2}) k^{3} - (\gamma_{1} - \gamma_{2}) k| .
\end{equation*}
Since $\beta_{1} \neq \beta_{2}$ we can choose $\epsilon \ll 1$ such that $|\gamma_{1}| +|\gamma_{2}| \leq  \epsilon $ and  consequently
$$|\gamma_{1} - \gamma_{2}||k| \leq \frac{1}{2}|\beta_{1} - \beta_{2}||k|^{3}.$$ So, using this previous inequality, yields that
$$ \left< \tau  - \phi^{\beta_{1}, \gamma_{1}}(k)\right> \left< \tau  - \phi^{\beta_{2}, \gamma_{2}}(k)\right>\geq  |\beta_{1} - \beta_{2}||k|^{3} - |\gamma_{1} - \gamma_{2}||k| \geq \frac{1}{2}|\beta_{1} - \beta_{2}||k|^{3}.
$$
Thus, we obtain
\begin{equation*}
\left| H(k, \tau) \right| \leq \frac{C_{1}(\beta_{1}, \beta_{2})|k|}{|k|^{3(b^{-})}\left< \tau  - \phi^{\beta_{2}, \gamma_{2}}(k)\right>^{(1-2b)^{+}}}\leq  \frac{C_{1}(\beta_{1}, \beta_{2})}{|k|^{3(b^{-})-1}} \leq  C_{1}(\beta_{1}, \beta_{2})
\end{equation*}
where we use the fact the $b\in(\frac{1}{3},\frac{1}{2}]$ in the second and  third inequality, respectively. This ends the proof of the claim.
\vspace{0.2cm}

With this in hand, we infer that 
\begin{equation*}
\begin{split}
\|\partial_{x}w\|_{X_{s, b-1}^{\beta_{2}, \gamma_{2}
}}  \leq & \ C_{1} \sup_{\|g\|_{X_{-s,1- b}^{\beta_{2}, \gamma_{2}}} \leq 1}  \sum_{k \in \mathbb{Z}} \int_{\mathbb{R}} \left| \widetilde{W}(k, \tau) \right|\left| \widetilde{G}(k, \tau)\right|d \tau\\ \leq  &\ C_{1} \sup_{\|g\|_{X_{-s, 1- b}^{\beta_{2}, \gamma_{2}}} \leq 1}    \left\|  \widetilde{W} \right\|_{\ell^{2}(\mathbb{Z})L^{2}(\mathbb{R})} \left\|\widetilde{G} \right\|_{\ell^{2}(\mathbb{Z})L^{2}(\mathbb{R})}\\ \leq &    \ C_{1} \left\|w \right\|_{X_{s, b^{-}}^{\beta_{1}, \gamma_{1}}}.
\end{split}
\end{equation*}
Consequently, for $ \frac{1}{3} < b  \leq \frac{1}{2}$, there exists $\theta > 0$ such that
$$
\|\partial_{x} w \|_{X_{s, b-1}^{\beta_{2}, \gamma_{2} ,T }} \leq C_{1} \|w\|_{X_{s,  b^{ -}}^{\beta_{1}, \gamma_{1} ,T }} \leq C_{1}T^{\theta}\|w\|_{X_{s, b }^{\beta_{1}, \gamma_{1} ,T }},
$$
reaching estimate \eqref{norm1}.

Now, to prove the second inequality in \eqref{norm2}, note that by duality we have
\begin{equation*}
\begin{split}
\|\partial_{x} w \|_{Y_{s, b - \frac{3}{2}}^{\beta_{2}, \gamma_{2}}} =& \sup_{\begin{array}{ccc}  \|a_{k}\|_{ \ell^{2}(\mathbb{Z})}  \\ a_{k} \neq 0 \end{array}}  \sum_{k \in \mathbb{Z}}  a_{k} \int_{\mathbb{R}} \frac{ik\left< k \right>^{s} | \widetilde{w}(k, \tau)|}{\left<  \tau - \phi_{\beta_{2}, \gamma_{2}}(k)\right>^{ \frac{3}{2} - b}} d\tau \\
 \leq &  \sup_{\begin{array}{ccc}  \|a_{k}\|_{ \ell^{2}(\mathbb{Z})}  \\ a_{k} \neq 0 \end{array}}   \sum_{k \in \mathbb{Z}} \int_{\mathbb{R}} H(k , \tau) \frac{a_{k}}{\left<  \tau - \phi^{\beta_{2}, \gamma_{2}}(k)\right>^{\frac{1}{2}^{+} }}  \widetilde{W}(k, \tau) d\tau \\
%\leq &  \sup_{\begin{array}{ccc}  \|a_{k}\|_{ \ell^{2}(\mathbb{Z})}  \\ a_{k} \neq 0 \end{array}}   \sum_{k \in \mathbb{Z}} \int_{\mathbb{R}} H(k , \tau)|a_{k}| \left| \widetilde{W}(k, \tau)\right|,
\end{split}
\end{equation*}
where
$$
H(k , \tau) = \frac{|k|}{\left<  \tau - \phi^{\beta_{1}, \gamma_{1}}(k)\right>^{b^{-}} \left<  \tau - \phi^{\beta_{2}, \gamma_{2}}(k)\right>^{(1-b)^{-}}}
$$
and
$$
\widetilde{W}(k, \tau) = \left< k \right>^{s} \left<  \tau - \phi^{\beta_{1}, \gamma_{1}}(k)\right>^{b^{-}}|\widetilde{w}(k, \tau)|.
$$
The claim proved before give us 
\begin{equation*}
\begin{split}
\|\partial_{x} w \|_{Y_{s, -b - \frac{1}{2}}^{\beta_{2}, \gamma_{2}}} \leq&  C_{1} \sup_{\|a_{k}\|_{\ell^{2}(\mathbb{Z})} \leq 1}   \|w\|_{X_{s,  b^{-} }^{\beta_{1}, \gamma_{1}}} \left\|  \frac{|a_{k}|}{\left<  \tau - \phi^{\beta_{2}, \gamma_{2}}(k)\right>^{\frac{1}{2}+}}\right \|_{\ell^{2}(\mathbb{Z})L^{2}(\mathbb{R})} \\
\leq &  C_{1} \sup_{\|a_{k}\|_{\ell^{2}(\mathbb{Z})} \leq 1}   \|w\|_{X_{s,  b^{-} }^{\beta_{1}, \gamma_{1}}}\|a_{k}\|_{\ell^{2}(\mathbb{Z})} \leq  C_{1} \|w\|_{X_{s,  b^{-} }^{\beta_{1}, \gamma_{1}}},
\end{split}
\end{equation*}
showing the second estimate of \eqref{norm2}, and consequently, Lemma \ref{derivativeestimate} is proved.
\end{proof}

The next lemma was borrowed from \cite[Lemmas 4.1 and 4.2]{Zhang} and concerns with the \textit{bilinear estimates} in Bourgain spaces for the term $\partial_{x}(uv)$ when the functions $u$ and $v$ belong in $X_{s, b}^{\beta_{i}, \gamma_{i}}$ for $\beta_{i}$ and $\gamma_{i}$, $i=1$ and $2$, distinct. In fact, the authors in \cite{Zhang} showed the result for general cases on domain $\mathbb{T}_{\lambda}\times \mathbb{R}$, for $\lambda\geq1$. Here, we will revisit the result proving in a simpler way the \textit{bilinear estimates} on $\mathbb{T} \times [0,T]$, which will be used for obtaining our future results.
 
\begin{lemma}\label{bilinearestlemma}
Let $s \geq 0$, $T \in (0,1)$ and $\beta_{1}, \beta_{2} \in \mathbb{R}^{*}$, with $\beta_{1} \neq \beta_{2}$\footnote{The cases $\beta_{1} = \beta_{2}$ and $\gamma_{1} =\gamma_{2}$ is known be true and can be seen in \cite[Proposition 5]{collianderetal2003}.}. Also consider that  $\frac{\beta_{1}}{\beta_{2}} < \frac{1}{4}$. Let $u$ and $v$ functions such that  with $[u]=[v]=0$. There exist constants $\theta > 0$, $\epsilon = \epsilon (\beta_{1}, \beta_{2})>0$ and $C_{2}= C_{2}(\beta_{1}, \beta_{2}) >0$, independent of $T$, $u$ and $v$, such that if $|\gamma_{1}| + |\gamma_{2}| < \epsilon $, we have:
\begin{itemize}
\item[a)] 
If  $u \in    X^{\beta_{1}, \gamma_{1},T}_{s, \frac{1}{2}}$ and  $v \in    X^{\beta_{2}, \gamma_{2},T}_{s, \frac{1}{2}} $, then 
\begin{eqnarray}\label{bilinearestimate}
\|\partial_{x}(uv)\|_{Z^{\beta_{2}, \gamma_{2},T}_{s, - \frac{1}{2}}} \leq C_{2}T^{\theta}\|u\|_{X^{\beta_{1}, \gamma_{1},T}_{s,  \frac{1}{2}}}\|v\|_{X^{\beta_{2} , \gamma_{2} ,T}_{s, \frac{1}{2}}}.
\end{eqnarray}
\item[b)] If $u, v \in X^{\beta_{2}, \gamma_{2},T}_{s, \frac{1}{2}}$ then 
\begin{eqnarray}\label{bilinearestimate2}
\|\partial_{x}(uv)\|_{Z^{\beta_{1}, \gamma_{1},T}_{s, - \frac{1}{2}}} \leq C_{2}T^{\theta}\|u\|_{X^{\beta_{2}, \gamma_{2},T}_{s,  \frac{1}{2}}}\|v\|_{X^{\beta_{2} , \gamma_{2} ,T}_{s, \frac{1}{2}}}.
\end{eqnarray}
\end{itemize}
\end{lemma}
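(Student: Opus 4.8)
The plan is to prove each of \eqref{bilinearestimate} and \eqref{bilinearestimate2} by reducing it, via duality and Plancherel, to a trilinear estimate on the space--time Fourier side, and then to extract all the required smoothing from the \emph{resonance function} attached to the two distinct dispersions. First I would reduce the target norms. Since $Z^{\beta,\gamma}_{s,-\frac12}=X^{\beta,\gamma}_{s,-\frac12}\cap Y^{\beta,\gamma}_{s,-1}$, it suffices to control the $X_{s,-\frac12}$ part and the $Y_{s,-1}$ part separately, exactly as in the two inequalities \eqref{norm2}. As in Lemma \ref{derivativeestimate}, I would replace the endpoint exponent $\frac12$ by $b^-=\frac12-\widetilde{\epsilon}$ and recover the factor $T^{\theta}$ from the time localization $0<T<1$; this disposes of the borderline issue and lets me work with global-in-time norms. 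Dualizing the $X$-part against $g$ with $\|g\|_{X^{\beta_2,\gamma_2}_{-s,1/2}}\le 1$, writing $k=k_1+k_2$, $\tau=\tau_1+\tau_2$, and using for $s\ge 0$ the elementary bound $\langle k\rangle^{s}\lesssim\langle k_1\rangle^{s}+\langle k_2\rangle^{s}$ to discard the $\langle k\rangle^{s}$ weight, everything reduces to the $s=0$ trilinear form
\[
\sum_{k=k_1+k_2}\iint \frac{|k|}{\langle\sigma_1\rangle^{b^-}\langle\sigma_2\rangle^{b^-}\langle\sigma\rangle^{1/2}}\,|U(k_1,\tau_1)|\,|V(k_2,\tau_2)|\,|G(k,\tau)|\,d\tau_1\,d\tau,
\]
where $\sigma_1,\sigma_2,\sigma$ are the modulations attached to the three dispersions and $\|U\|_{\ell^2L^2}=\|u\|_{X_{0,b^-}}$, $\|V\|_{\ell^2L^2}=\|v\|_{X_{0,b^-}}$, $\|G\|_{\ell^2L^2}\le 1$. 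The $Y$-part is handled identically, with $\langle\sigma\rangle^{1/2}$ replaced by the corresponding $\ell^{2}L^{1}$ weight.

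The structural heart is the resonance identity. In case (a) a direct computation gives
\[
\sigma-\sigma_1-\sigma_2=k_1\big[(\beta_1-\beta_2)k_1^{2}-3\beta_2 k_1k_2-3\beta_2 k_2^{2}+(\gamma_2-\gamma_1)\big],
\]
while in case (b) one finds
\[
\sigma-\sigma_1-\sigma_2=k\big[(\beta_2-\beta_1)(k_1^{2}+k_2^{2})-(\beta_2+2\beta_1)k_1k_2+(\gamma_1-\gamma_2)\big].
\]
In both cases the bracket is a quadratic form in $(k_1,k_2)$ with leading coefficient $\beta_1-\beta_2\neq 0$ and discriminant equal to $3\beta_2(4\beta_1-\beta_2)$, which is negative \emph{precisely} when $\beta_1/\beta_2<\frac14$ (this equivalence holds irrespective of the sign of $\beta_2$, hence applies simultaneously to the two relevant dispersions $(\beta_1,\beta_2)=(\alpha,1)$ and $(1,\alpha)$). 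A negative discriminant means the form has no real root, so it is sign-definite and $\gtrsim k_1^2+k_2^2$. For $|\gamma_1|+|\gamma_2|<\epsilon$ small and with $k_1,k_2\neq 0$ (the zero modes being excluded by $[u]=[v]=0$, which absorbs the lower-order correction $\gamma_2-\gamma_1$), this yields the lower bounds
\[
|\sigma-\sigma_1-\sigma_2|\gtrsim |k_1|(k_1^{2}+k_2^{2})\gtrsim |k_1|\,|k|^{2}\quad\text{(a)},\qquad |\sigma-\sigma_1-\sigma_2|\gtrsim |k|(k_1^{2}+k_2^{2})\gtrsim |k|^{3}\quad\text{(b)}.
\]
Since $\max(\langle\sigma\rangle,\langle\sigma_1\rangle,\langle\sigma_2\rangle)\gtrsim\langle\sigma-\sigma_1-\sigma_2\rangle$, this is exactly the smoothing that absorbs the derivative, because $|k|\big/|\sigma-\sigma_1-\sigma_2|^{1/2}\lesssim |k_1|^{-1/2}\le 1$ in case (a) and $\lesssim |k|^{-1/2}\le 1$ in case (b).

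To close the estimate I would split according to which of the three modulations realizes the maximum, assign a half-power of the largest modulation to cancel the factor $|k|$ via the bounds above, and keep the remaining two weights $\langle\cdot\rangle^{b^-}$ to run Cauchy--Schwarz in the modulation variables; the surviving frequency sum is then summable either by an elementary Schur-type argument, made available by the surplus smoothing from the strong resonance gain, or by invoking Bourgain's periodic $L^{4}_{t,x}$ Strichartz estimate (see \cite{collianderetal2003}), which is the reason the admissible range is $b>\frac13$. Reinserting the time localization produces the prefactor $T^{\theta}$ and yields \eqref{bilinearestimate}--\eqref{bilinearestimate2}. The main obstacle is not any single inequality but the bookkeeping at the borderline regularity: one must verify carefully that the low-frequency interactions and the linear correction $\gamma_2-\gamma_1$ never spoil the resonance lower bound, and one must handle the subcase in which the output modulation $\langle\sigma\rangle$ dominates, where a pure convolution bound is insufficient and the $L^{4}$ Strichartz input is genuinely needed.
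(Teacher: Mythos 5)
Your proposal follows essentially the same route as the paper's proof: duality and Plancherel reduce both estimates to a trilinear form, the derivative loss is absorbed by a lower bound on the resonance function (your discriminant computation $3\beta_{2}(4\beta_{1}-\beta_{2})<0\iff\beta_{1}/\beta_{2}<\frac{1}{4}$ is exactly the paper's condition that $h(x)=x^{2}+x+\frac{1}{3}(1-\beta_{1}/\beta_{2})$ has no real root, and your case (b) identity supplies the computation the paper omits as ``similar''), the remaining sum is closed with Bourgain's embedding $X_{0,\frac{1}{3}}\hookrightarrow\ell^{4}L^{4}$, and $T^{\theta}$ is recovered by trading a small amount of modulation weight. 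The only caveat is that in the $\ell^{2}_{k}L^{1}_{\tau}$ component, in the subcase where the output modulation dominates both input modulations, the paper closes the argument not with the $L^{4}$ Strichartz input but with a measure estimate on the admissible set $\Omega^{\delta}(k_{3})$ of output modulations (the characteristic-function trick borrowed from Yang--Zhang), which is precisely the step you correctly flag as the main remaining obstacle.
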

\begin{proof}
We will prove the estimate \eqref{bilinearestimate}. The proof of \eqref{bilinearestimate2} is shown similarly and we omit its demonstration. Let $u\in X_{s, \frac{1}{2}}^{\beta_{1}, \gamma_{1}}$ and $v\in X_{s, \frac{1}{2}}^{\beta_{2}, \gamma_{2}}$, with $[u]=[v]=0$. Necessarily, we must to prove 
\begin{equation}\label{bilinear3}
\|\partial_{x}(uv)\|_{X_{s, - \frac{1}{2}}^{\beta_{2}, \gamma_{2}}} \leq C_{2}T^{\theta} \|u\|_{X_{s, \frac{1}{2}}^{\beta_{1}, \gamma_{1}}}\|v\|_{X_{s, \frac{1}{2}}^{\beta_{2}, \gamma_{2}}}
\end{equation} 
and 
\begin{equation}\label{bilinear3a}
 \|\partial_{x}(uv)\|_{Y_{s, - 1}^{\beta_{2}, \gamma_{2}}}  \leq C_{2}T^{\theta} \|u\|_{X_{s, \frac{1}{2}}^{\beta_{1}, \gamma_{1}}}\|v\|_{X_{s, \frac{1}{2}}^{\beta_{2}, \gamma_{2}}},
\end{equation} 
for some $\theta >0$. 

Firstly, we will show \eqref{bilinear3}. For this end from Plancherel theorem, duality approach and convolution properties, yields that
\begin{equation}\label{bilinear4}
\begin{split}
\displaystyle \|\partial_{x}(uv)\|_{X_{s, -\frac{1}{2}}^{\beta_{2}, \gamma_{2}}}= & \displaystyle \sup_{\|g\|_{X_{-s, \frac{1}{2}}^{\beta_{2}, \gamma_{2}}} \leq 1}\sum_{ \Gamma} \int_{ \Lambda} \frac{|k_{3}| \left< k_{3}\right>^{s}}{\left< k_{1}\right>^{s}\left< k_{2}\right>^{s}}\prod_{i=1}^{3}\frac{|f_{i}(k_{i}, \tau_{i})|}{\left< L_{i}(k_{i}, \tau_{i})\right>^{\frac{1}{2}}} d\Lambda.
\end{split}
\end{equation}
Here $L_{i}(k_{i}, \tau_{i}) = \tau_{i} - \phi^{\beta_{i}, \gamma_{i}}(k_{i})$, for $i=1,2$, $L_{3}(k_{3}, \tau_{3}) = \tau_{3} - \phi^{\beta_{2}, \gamma_{2}}(k_{3})$, $\Gamma$ and $\Lambda$ given by
\begin{equation*}
\Gamma := \left\{ (k_{1}, k_{2}, k_{3}) \in \mathbb{Z}^{3} \ \ : \ \ \sum_{i=1}^{3} k_{i} = 0\right\} \ \ \mbox{and} \ \ \Lambda := \left\{ (k_{1}, k_{2}, k_{3}) \in \mathbb{R}^{3} \ \ : \ \ \sum_{i=1}^{3} \tau_{i} = 0\right\},
\end{equation*}
respectively, with
$$f_{1}(k_{1}, \tau_{1}) = \left< k_{1}\right>^{s} \left< \tau_{1} - \phi^{\beta_{1}, \gamma_{1}}(k_{1})\right>^{\frac{1}{2}} \widetilde{u}(k_{1}, \tau_{1}),$$ $$f_{2}(k_{2}, \tau_{2}) = \left< k_{2}\right>^{s} \left< \tau_{2} - \phi^{\beta_{2}, \gamma_{2}}(k_{2})\right>^{\frac{1}{2}} \widetilde{v}(k_{2}, \tau_{2}) $$ and $$f_{3}(k_{3}, \tau_{3}) = \left< k_{3}\right>^{-s} \left< \tau_{3} - \phi^{\beta_{2}, \gamma_{2}}(k_{3})\right>^{\frac{1}{2}} \widetilde{g}(k_{3}, \tau_{3}).$$
The condition $[u]=[v]=0$ together with the fact that $(k_{1}, k_{2}, k_{3}) \in \mathbb{Z}^{3}$ ensure us that we only need to consider the case $|k_{i}| \geq 1$ for $i=1,2,3$. In addition, as $(k_{1}, k_{2}, k_{3}) \in \Gamma$, we have $\frac{\left< k_{3}\right> }{\left<k_{1}\right>  \left< k_{2} \right> }\leq 1 $. Thus,
$$
 \frac{|k_{3}| \left< k_{3}\right>^{s}}{\left< k_{1}\right>^{s}\left< k_{2}\right>^{s}} \leq (|k_{1}||k_{2}||k_{3}|)^{\frac{1}{2}},
$$
for all $s \geq 0$. Define
\begin{equation}\label{functionH}
H(k_{1}, k_{2}, k_{3}) := \phi^{\beta_{1}, \gamma_{1}}(k_{1})+ \phi^{\beta_{2}, \gamma_{2}}(k_{2}) + \phi^{\beta_{2}, \gamma_{2}}(k_{3}).
\end{equation}

\vspace{0.2cm}

\noindent\textbf{Claim.}
Let $\frac{\beta_{1}}{\beta_{2}} < \frac{1}{4}$. There exist $\epsilon = \epsilon (\beta_{1}, \beta_{2})$ and $\delta > 0$ such that if $|\mu| + |\zeta| < \epsilon$ then the function $H$ defined in \eqref{functionH} is $\delta$-significant on $\mathbb{Z}$, i.e.,
\begin{equation*}
\left< H(k_{1}, k_{2}, k_{3}) \right> \geq \delta \prod_{i=1}^{3} |k_{i}| \ \ \mbox{for any} \ \ (k_{1}, k_{2}, k_{3}) \in \Gamma.
\end{equation*}

Assume that the claim holds true. Thus,
\begin{equation*}
\prod_{i=1}^{3} |k_{i}| \lesssim \left< H(k_{1}, k_{2}, k_{3} \right> = \left< \sum_{i=1}^{3} L_{i}(k_{i}, \tau_{i})\right>.
\end{equation*}
Using the last inequality we obtain
\begin{equation}\label{bilinear5}
\begin{split}
\sum_{ \Gamma} \int_{ \Lambda} \frac{|k_{3}| \left< k_{3}\right>^{s}}{\left< k_{1}\right>^{s}\left< k_{2}\right>^{s}}\prod_{i=1}^{3}\frac{|f_{i}(k_{i}, \tau_{i})|}{\left< L_{i}(k_{i}, \tau_{i})\right>^{\frac{1}{2}}} d\Lambda \lesssim &  \sum_{ \Gamma} \int_{ \Lambda} \left<  \sum_{j=1}^{3} L_{i}(k_{i}, \tau_{i}) \right>^{\frac{1}{2}}\prod_{i=1}^{3}\frac{f_{i}(k_{i}, \tau_{i})}{\left< L_{i}(k_{i}, \tau_{i})\right>^{\frac{1}{2}}} d\Lambda \\
\lesssim &  \sum_{j=1}^{3}\sum_{ \Gamma} \int_{ \Lambda}\frac{\left< L_{j}(k_{j}, \tau_{j}) \right>^{\frac{1}{2}}\prod_{i=1}^{3} f_{i}(k_{i}, \tau_{i})}{\left< L_{i}(k_{i}, \tau_{i})\right>^{\frac{1}{2}}} d\Lambda .
\end{split} 
\end{equation}
Let us estimate each term of the right hand side of \eqref{bilinear5}. For simplicity, we will present the estimate corresponding to $j=1$. The other terms will be estimated in the similar way. For this case, we have
\begin{equation*}
\begin{split}
\sum_{ \Gamma} \int_{ \Lambda} &|f_{1}(k_{1}, \tau_{1})| \frac{|f_{2}(k_{2}, \tau_{2})|}{\left< L_{2}(k_{2}, \tau_{2})\right>^{\frac{1}{2}}}\frac{|f_{3}(k_{3}, \tau_{2})|}{\left< L_{2}(k_{3}, \tau_{3})\right>^{\frac{1}{2}}} = \sum_{ \Gamma} \int_{ \Lambda} \widetilde{g_{1}}(k_{1}, \tau_{1}) \widetilde{g_{2}}(k_{2}, \tau_{2})  \widetilde{g_{3}}(k_{3}, \tau_{3})
\end{split}
\end{equation*}
with
\begin{equation*}
 \widetilde{g_{1}} = | f_{1}(k_{1}, \tau_{1})| \ \mbox{ and } \  \widetilde{g_{i}} =  \frac{|f_{i}(k_{i}, \tau_{i})|}{\left< L_{i}(k_{i}, \tau_{i})\right>^{\frac{1}{2}}} \  \mbox{ for } \ i=2,3.
\end{equation*}
Thus, 
\begin{equation*}
\begin{split}
\sum_{ \Gamma} \int_{ \Lambda} |f_{1}(k_{1}, \tau_{1})| \frac{|f_{2}(k_{2}, \tau_{2})|}{\left< L_{2}(k_{2}, \tau_{2})\right>^{\frac{1}{2}}}\frac{|f_{3}(k_{3}, \tau_{2})|}{\left< L_{2}(k_{3}, \tau_{3})\right>^{\frac{1}{2}}} \lesssim & \sum_{k_{1} \in \mathbb{Z}} \int_{\mathbb{R}}\widetilde{g_{1}}(-k_{1}, - \tau_{1}) \widetilde{g_{2}g_{3}}(k_{1}, \tau_{1} )d\tau_{1} \\
\lesssim & \|g_{1}\|_{\ell^{2}(\mathbb{Z})L^{2}(\mathbb{R})}\|g_{2}\|_{\ell^{4}(\mathbb{Z})L^{4}(\mathbb{R})}\|g_{3}\|_{\ell^{4}(\mathbb{Z})L^{4}(\mathbb{R})}\\
% \lesssim &  \|g_{1}\|_{\ell^{2}(\mathbb{Z})L^{2}(\mathbb{R})}\|g_{2}\|_{X_{0, \frac{1}{3}}^{\beta_{2}, \gamma_{2}}}\|g_{3}\|_{X_{0, \frac{1}{3}}^{\beta_{2}, \gamma_{2}}} \\
 \lesssim & \|u\|_{X_{s, \frac{1}{2}}^{\beta_{1}, \gamma_{1}}} \|v\|_{X_{s, \frac{1}{3}}^{\beta_{2}, \gamma_{2}}} \|g\|_{X_{-s, \frac{1}{3}}^{\beta_{2}, \gamma_{2}}},
\end{split}
\end{equation*}
where we have used that $X_{0,\frac{1}{3}}^{\beta, \gamma}$ is continuously imbedded in the space $\ell^{4}(\mathbb{Z})L^{4}(\mathbb{R})$\footnote{See \cite[Lemma 3.2]{laurent2010} or  \cite[Lemma 3.9]{Zhang}.}. Replacing the last inequality in \eqref{bilinear5} we conclude from \eqref{bilinear4} that
\begin{equation*}
\|\partial_{x}(uv)\|_{X_{s, -\frac{1}{2}}^{\beta_{2}, \gamma_{2}}} \lesssim  \left(\|u\|_{X_{s, \frac{1}{2}}^{\beta_{1}, \gamma_{1}}} \|v\|_{X_{s, \frac{1}{3}}^{\beta_{2}, \gamma_{2}}} + \|u\|_{X_{s, \frac{1}{3}}^{\beta_{1}, \gamma_{1}}} \|v\|_{X_{s, \frac{1}{2}}^{\beta_{2}, \gamma_{2}}} \right),
\end{equation*}
which implies that for any $T \in (0,1)$, there exists a positive constant $C_{2}$, independent of $T$, such that
\begin{equation*}
\|\partial_{x}(uv)\|_{X_{s, -\frac{1}{2}}^{\beta_{2}, \gamma_{2}, T}} \leq C_{2}T^{\frac{1}{6}}\|u\|_{X_{s, \frac{1}{2}}^{\beta_{1}, \gamma_{1}, T}} \|v\|_{X_{s, \frac{1}{2}}^{\beta_{2}, \gamma_{2}, T}},
\end{equation*}
showing \eqref{bilinear3}.

Before presenting the proof of \eqref{bilinear3a}, let us prove the claim. In fact, note that the function $H$ defined by \eqref{functionH} can be rewrite as
\begin{equation*}
H(k_{1}, k_{2}, k_{3}) = -3\beta_{2}k_{1}^{3}h\left(\frac{k_{2}}{k_{1}} \right) - (\gamma_{1} - \gamma_{2})k_{1},
\end{equation*}
where $h(x) = x^{2} + x + \frac{1}{3}(1 - \frac{\beta_{1}}{\beta_{2}})$. Since $\frac{\beta_{1}}{\beta_{2}} < \frac{1}{4}$, so $h$ does not have real roots. Thus, there exists $\delta_{1} >0$ such that $h(x) \geq  \delta_{1}(x^{2} + 1)$ for all $x \in \mathbb{R}$. In addition, we can take $\epsilon$ sufficiently enough such that $|\beta_{1} - \beta_{2}||k_{1}| \leq \frac{1}{2} + \delta_{1}|\beta_{2}||k_{1}|^{3}$ for any $k_{2} \in \mathbb{Z}^{*}$. Hence, for $k_{1}, k_{2}, k_{3} \in \mathbb{Z}^{*}$ satisfying $\sum_{i=1}^{3} k_{i} =0$ we have
\begin{equation*}
\begin{split}
\left< H(k_{1}, k_{2}, k_{3}) \right> \geq & 1+  3 |\beta_{2}||k_{1}|^{3}h\left(\frac{k_{2}}{k_{1}} \right) - |\gamma_{1} - \gamma_{2}||k_{1}| \geq \frac{1}{2} +  3 \delta_{1} |\beta_{2}||k_{1}||k_{1}|^{2}\left(\frac{k_{2}^{2}}{k_{1}^{2}} + 1 \right) - \delta_{1}|\beta_{2}||k_{1}|^{3} \\
\geq & \delta \left( 1 + |k_{1}| \sum_{i}^{3}|k_{i}|^{2}\right) \geq \delta \prod_{i=1}^{3}|k_{i}|,
\end{split}
\end{equation*}
where $\delta$ is a positive constant which depends on $\beta_{1}, \beta_{2}$ and the claim is verified.

\vspace{0.2cm}

To prove \eqref{bilinear3a} using Cauchy-Schwarz inequality and for arguments similar to the one used previously, follows that
\begin{equation}\label{bilinear6}
\|\partial_{x} (uv) \|_{Y_{s, -1}^{\beta_{2}, \gamma_{2}}} \leq I \times \sup_{\begin{array}{ccc}  \|a_{k}\|_{ \ell^{2}(\mathbb{Z})} \leq 1 \\ a_{k} \neq 0 \end{array}}  \left\|\frac{\chi_{\Omega(k_{3})}(L_{3}) a_{k_{3}}}{\left< L_{3} \right>^{1-a}} \right\|_{\ell^{2}_{k_{3}}(\mathbb{Z})L^{2}_{\tau_{3}}(\mathbb{R})} 
\end{equation}
with\begin{equation*}
I = \sup_{\|f_{3}\|_{\ell^{2}(\mathbb{Z})L^{2}(\mathbb{R})} \leq 1}\sum_{\Gamma}  \int_{\Lambda} \frac{|k_{3}|\left<k_{3}\right>^{s}}{\left<k_{2}\right>^{s} \left<k_{1}\right>^{s}} \frac{|f_{1}(k_{1}, \tau_{1})|}{\left< L_{1}(k_{1}, \tau_{1})\right>^{\frac{1}{2}}} \frac{|f_{2}(k_{2},  \tau_{2})|}{\left< L_{2}(k_{2},\tau_{2})\right>^{\frac{1}{2}}}\frac{|\widetilde{f_{3}}(k_{3}, \tau_{3})|}{\left< L_{3}(k_{3}, \tau_{3}) \right>^{a} }d\Lambda
\end{equation*}
and $f_{i}$, $L_{i}$, for $i=1,2,3$, defined as in \eqref{bilinear4}. Here the characteristic function $\chi_{\Omega(k_{3})}(L_{3})$ will be chosen so that 
\begin{equation*}
\left\| \frac{\chi_{\Omega(k_{3})}(L_{3}) }{\left< L_{3} \right>^{1-a}} \right\|_{L^{2}_{\tau_{3}}(\mathbb{R})} \lesssim 1 
\end{equation*}
uniformly in the parameter $k_{3}$,$|k_{i}| \geq 1$,  for $i=1,2,3$, and $a>0$ to be chosen conveniently. 

Now, define $ MAX:= \max \{\left< L_{1}(k_{1}, \tau_{1})\right>,\left< L_{2}(k_{2}, \tau_{2})\right>, \left< L_{3}(k_{3}, \tau_{3})\right>\}$. Since $H$ is $\delta$-significant, we have
\begin{equation*}
\prod_{i=1}^{3} |k_{i}| \lesssim H(k_{1}, k_{2}, k_{3}) = \sum_{i=1}^{3} L_{i}(k_{i}, \tau_{i}) \lesssim \sum_{i}^{3} \left<L_{i}(k_{i}, \tau_{i}) \right>  \lesssim  MAX = \left< L_{1}(k_{1}, \tau_{1})\right>.
\end{equation*}
The rest of the proof will be split in two cases. 

\vspace{0.2cm}

\noindent\textbf{Case 1:} $MAX = \left< L_{1}(k_{1}, \tau_{1})\right>$ or $MAX = \left< L_{2}(k_{2}, \tau_{2})\right>$.

\vspace{0.2cm}

Assume without loss of generality$MAX = \left< L_{1}(k_{1}, \tau_{1})\right>$.  Take $a = \frac{1}{2}^{-} = \frac{1}{2} - \epsilon'$ with $12 \epsilon' = \frac{1}{100}$. In this case, we can put $\Omega(k_{3}) = \mathbb{R}$. Thus, from \eqref{bilinear6} we have
\begin{equation*}
\|\partial_{x} (uv) \|_{Y_{s, -1}^{\beta_{2}, \gamma_{2}}} \lesssim  \sup_{\|f_{3}\|_{\ell^{2}(\mathbb{Z})L^{2}(\mathbb{R})} \leq 1}\sum_{\Gamma}  \int_{\Lambda} |f_{1}(k_{1}, \tau_{1})| \frac{|f_{2}(k_{2},  \tau_{2})|}{\left< L_{2}(k_{2},\tau_{2})\right>^{\frac{1}{2}}}\frac{|\widetilde{f_{3}}(k_{3}, \tau_{3})|}{\left< L_{3}(k_{3}, \tau_{3}) \right>^{\frac{1}{2}^{-}} }d\Lambda.
\end{equation*}
Similarly to $X_{s, -\frac{1}{2}}^{\beta_{2}, \gamma_{2}}$-norm, we obtain 
\begin{equation}\label{bilinear7}
\|\partial_{x} (uv) \|_{Y_{s, -1}^{\beta_{2}, \gamma_{2}}} \lesssim  \sup_{\|f_{3}\|_{\ell^{2}(\mathbb{Z})L^{2}(\mathbb{R})} \leq 1} \lesssim \|u\|_{X_{s, \frac{1}{2}}^{\beta_{1}, \gamma_{1}}}\|v\|_{X_{s, \frac{1}{3}}^{\beta_{2}, \gamma_{2}}}\|f_{3}\|_{X_{0, -\frac{1}{6}^{+}}^{\beta_{2}, \gamma_{2}}} \lesssim \|u\|_{X_{s, \frac{1}{2}}^{\beta_{1}, \gamma_{1}}}\|v\|_{X_{s, \frac{1}{3}}^{\beta_{2}, \gamma_{2}}}.
\end{equation}
\vspace{0.2cm}

\noindent\textbf{Case 2:}  $MAX = \left< L_{3}(k_{3}, \tau_{3})\right>$

\vspace{0.2cm}

This case will be divided in two parts. 

\vspace{0.2cm}

\noindent\textbf{Part I. } $\left<L_{3}(k_{3}, \tau_{3}) \right>^{\frac{1}{100}} \leq \delta \left<L_{1}(k_{1}, \tau_{1}) \right> \left<L_{2}(k_{2},\tau_{2}) \right> $.

\vspace{0.2cm}

Observe that 
\begin{equation}\label{bilinear8}
\left( \prod_{i=1}^{3} |k_{i}| \right)^{\frac{1}{2}} \lesssim \left< L_{3}(k_{3}, \tau_{3})\right>^{\frac{1}{2}} =  \left< L_{3}(k_{3}, \tau_{3})\right>^{\frac{1}{2}-\epsilon'} \left< L_{1}(k_{1}, \tau_{1})\right>^{\frac{1}{12}} \left< L_{2}(k_{2}, \tau_{2})\right>^{\frac{1}{12}}.
\end{equation}
Therefore, choosing $a$ and $\Omega(k_{3})$ as in the Case 1,  from \eqref{bilinear6} results
\begin{equation*}
\begin{split}
\|\partial_{x} (uv) \|_{Y_{s, -1}^{\beta_{2}, \gamma_{2}}} \lesssim & \sup_{\|f_{3}\|_{\ell^{2}(\mathbb{Z})L^{2}(\mathbb{R})} \leq 1}\sum_{\Gamma}  \int_{\Lambda}\frac{|f_{1}(k_{1}, \tau_{1})|}{\left< L_{1}(k_{1}, \tau_{1})\right>^{\frac{5}{12}}} \frac{|f_{2}(k_{2},  \tau_{2})|}{\left< L_{2}(k_{2},\tau_{2})\right>^{\frac{1}{12}}}|\widetilde{f_{3}}(k_{3}, \tau_{3})|d\Lambda\\
\lesssim & \|u\|_{X_{s, \frac{5}{12}}^{\beta_{1}, \gamma_{1}}} \|v\|_{X_{s, \frac{5}{12}}^{\beta_{2}, \gamma_{2}}}.
\end{split}
\end{equation*}

\vspace{0.2cm}

\noindent\textbf{Part II. }  $\left< L_{1}(k_{1}, \tau_{1})\right> \left<L_{2}(k_{2}, \tau_{2}) \right> \ll \delta \left< L_{3}(k_{3}, \tau_{3})\right>^{\frac{1}{100}}$

\vspace{0.2cm}

Note that
$$
\left< L_{3}(k_{3}, \tau_{3}) + H(k_{1}, k_{2}, k_{3}) \right> = \left< L_{1} + L_{2} \right> \ll \delta \left< L_{3} \right>^{\frac{1}{100}}.
$$
Thus, $|H(k_{1}, k_{2}, k_{3})| \sim |L_{3}(k_{3}, \tau_{3})|$ and
\begin{equation}\label{setomega2}
\left< L_{3}(k_{3}, \tau_{3}) + H(k_{1}, k_{2}, k_{3}) \right>  \ll \delta \left< H(k_{1}, k_{2}, k_{3}) \right>^{\frac{1}{100}}.
\end{equation} 
Define for any $k_{3} \in \mathbb{Z}^{*}$ the set
\begin{equation}\label{setomega}
\begin{split}
\Omega^{\delta}(k_{3}) := \left\{ \tau \in \mathbb{R} \ : \ \exists \, k_{1}, k_{2}, \in \mathbb{Z} \ \ \mbox{such that} \ \ \sum_{i=1}^{3} k_{i} = 0 \ \ \mbox{and} \right.\\
\left. \left< L_{3}(k_{3}, \tau_{3}) + H(k_{1}, k_{2}, k_{3}) \right>  \ll \delta \left< H(k_{1}, k_{2}, k_{3}) \right>^{\frac{1}{100}} \right\}.
\end{split}
\end{equation}
From \eqref{setomega2} follows that $L_{3}(k_{3}, \tau_{3}) \in \Omega^{\delta}(k_{3})$.  Taking $a= \frac{1}{2}$ and $\Omega(k_{3}) =  \Omega^{\delta}(k_{3})$ defined by \eqref{setomega} we have\footnote{This prove is extremely technical  and can be found in \cite[Lemma 6.2 case 5.2.2. and Lemma 6.3]{Zhang} which in turn was inspired by \cite[Lemma 7.4]{collianderetal2003}}
\begin{equation*}
\left\|\frac{\chi_{\Omega(k_{3})}(L_{3})}{\left< L_{3}\right>^{\frac{1}{2}}} \right\|_{L^{2}_{\tau_{3}(\mathbb{R})}} \lesssim 1
\end{equation*}
uniformly in $k_{3}$ implying 
\begin{equation*}
 \sup_{\begin{array}{ccc}  \|a_{k}\|_{ \ell^{2}(\mathbb{Z})} \leq 1 \\ a_{k} \neq 0 \end{array}}  \left\|\frac{a_{k_{3}}}{\left< L_{3} \right>^{\frac{1}{2}}} \right\|_{\ell^{2}_{k_{3}}(\mathbb{Z})L^{2}_{\tau_{3}}(\mathbb{R})}  \lesssim 1.
\end{equation*}
Hence, using the first inequality in \eqref{bilinear8} we obtain
\begin{equation*}
\begin{split}
\|\partial_{x} (uv) \|_{Y_{s, -1}^{\beta_{2}, \gamma_{2}}} \lesssim & \sup_{\|f_{3}\|_{\ell^{2}(\mathbb{Z})L^{2}(\mathbb{R})} \leq 1}\sum_{\Gamma}  \int_{\Lambda}  \frac{|f_{1}(k_{1}, \tau_{1})|}{\left< L_{1}(k_{1}, \tau_{1})\right>^{\frac{1}{2}}} \frac{|f_{2}(k_{2},  \tau_{2})|}{\left< L_{2}(k_{2},\tau_{2})\right>^{\frac{1}{2}}}|\widetilde{f_{3}}(k_{3}, \tau_{3})|
\lesssim  \|u\|_{X_{s, \frac{1}{3}}^{\beta_{1}, \gamma_{1}}}\|v\|_{X_{s, \frac{1}{3}}^{\beta_{2}, \gamma_{2}}}.
\end{split}
\end{equation*}
Then, in both situation we obtain
\begin{equation*}
\|\partial_{x}(uv)\|_{Y_{s, -1}^{\beta_{2}, \gamma_{2}}} \lesssim T^{\frac{1}{12}}\|u\|_{X_{s, \frac{1}{2}}^{\beta_{1}, \gamma_{1} , T }}\|v\|_{X_{s, \frac{1}{2}}^{\beta_{2}, \gamma_{2} , T }},
\end{equation*}
showing \eqref{bilinear3a} and, consequently, finishing the demonstration of the lemma. 
\end{proof}

Finally, to finish this section we will prove nonlinear estimates associated with the solutions of \eqref{systema} with $p=q=0$. To do it, we introduce the following notation $$\mathcal{Z}_i:=Z_{s, -\frac{1}{2}}^{\beta_{i}, \gamma_{i},T},$$ for $i=1,2$ and $$\mathcal{Z}:=Z_{s, -\frac{1}{2}}^{\beta_{1}, \gamma_{1},T} \times Z_{s, - \frac{1}{2}}^{\beta_{2}, \gamma_{2},T}.$$

\begin{lemma}\label{contralbourgainestimate2}
Let $(u,v)$ and $ (w,z)$ belong to $ \mathcal{Z}$ with $[u]=[v]=0$. Consider $s, \beta_{i}, \gamma_{i}$, $i=1,2$, as in Lemma \ref{bilinearestlemma} satisfying $\frac{\beta_{2}}{\beta_{1}}<0  $, $P $ and $Q$ defined by \eqref{nonlinearitya}. Then, there exist constants $\theta > 0$, $\epsilon= \epsilon(\beta_{1}, \beta_{2})>0$ and $C_{3}=C_{3}(\beta_{1}, \beta_{2})>0$, independent of $T$, $u$ and $v$, such that if $|\gamma_{1}| + |\gamma_{2}| < \epsilon$, the following estimates are satisfied
\begin{equation}\label{Znonlinearestimate}
\|\partial_{x}(P(u,v), Q(u,v))\|_{\mathcal{Z}} \leq C_{3}T^{\theta}\|(u,v)\|_{X_{s, \frac{1}{2}}^{\beta_{1} , \gamma_{1},T} \times X_{s, \frac{1}{2}}^{\beta_{2}, \gamma_{2},T}}^{2},
\end{equation}
\begin{equation}\label{Znonlinearestimate2}
\|\partial_{x}( P(u, v) - P(w, z))\|_{\mathcal{Z}_1} \leq   C_{3}T^{\theta}  \|(u,v) -(w,z)\|_{\mathcal{Z}}\left( \|(u,v)\|_{\mathcal{Z}} + \|(w,z)\|_{\mathcal{Z}} \right)
\end{equation}
and
\begin{equation}\label{Znonlinearestimate3}
\|\partial_{x}( Q(u, v) - Q(w, z))\|_{\mathcal{Z}_2} \leq  C_{3}T^{\theta}  \|(u,v) -(w,z)\|_{\mathcal{Z}} \left( \|(u,v)\|_{\mathcal{Z}} + \|(w,z)\|_{\mathcal{Z}} \right).
\end{equation}
\end{lemma}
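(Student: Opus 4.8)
The plan is to reduce \eqref{Znonlinearestimate}--\eqref{Znonlinearestimate3} entirely to the bilinear machinery of Lemma \ref{bilinearestlemma} supplemented by the classical single-dispersion estimate. Since $P$ and $Q$ in \eqref{nonlinearity_n} are homogeneous quadratic polynomials, each of $\partial_x P(u,v)$ and $\partial_x Q(u,v)$ is a fixed linear combination of the three bilinear monomials $\partial_x(u^2)$, $\partial_x(uv)$ and $\partial_x(v^2)$. Recalling that $u$ carries the dispersion $\beta_1$ and $v$ the dispersion $\beta_2$, and that $\mathcal{Z}_1=Z_{s,-\frac12}^{\beta_1,\gamma_1,T}$ while $\mathcal{Z}_2=Z_{s,-\frac12}^{\beta_2,\gamma_2,T}$, the estimate \eqref{Znonlinearestimate} separates into controlling $\partial_x P(u,v)$ in $\mathcal{Z}_1$ and $\partial_x Q(u,v)$ in $\mathcal{Z}_2$. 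First I would therefore record the six elementary tasks this produces: $\partial_x(u^2)$, $\partial_x(uv)$, $\partial_x(v^2)$ measured in $\mathcal{Z}_1$, and the same three monomials measured in $\mathcal{Z}_2$; each is to be bounded by $T^\theta$ times the product of the relevant $X_{s,\frac12}$-norms, after which summation yields \eqref{Znonlinearestimate}.

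The second step is to pair each monomial with the correct instance of the bilinear estimate. The crucial preliminary observation is that $\frac{\beta_2}{\beta_1}<0$ means $\beta_1$ and $\beta_2$ have opposite signs, whence both cross-ratios $\frac{\beta_1}{\beta_2}$ and $\frac{\beta_2}{\beta_1}$ are negative and in particular smaller than $\frac14$; thus Lemma \ref{bilinearestlemma} is applicable with $(\beta_1,\beta_2)$ in either order. With this understood, the monomials $\partial_x(u^2)\in\mathcal{Z}_1$ and $\partial_x(v^2)\in\mathcal{Z}_2$ share a single dispersion between both inputs and the target, so they are governed by the classical mean-zero periodic KdV bilinear estimate, which applies because $[u]=[v]=0$ (see the footnote to Lemma \ref{bilinearestlemma} and \cite[Proposition 5]{collianderetal2003}). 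The monomial $\partial_x(v^2)\in\mathcal{Z}_1$ is precisely case (b) of Lemma \ref{bilinearestlemma}, and $\partial_x(uv)\in\mathcal{Z}_2$ is precisely case (a). The two remaining monomials, $\partial_x(uv)\in\mathcal{Z}_1$ and $\partial_x(u^2)\in\mathcal{Z}_2$, follow from cases (a) and (b) respectively after interchanging the roles of $\beta_1$ and $\beta_2$, which is exactly what the opposite-sign hypothesis licenses. Summing the six bounds and absorbing the finitely many structural constants $A,B,C,D$ together with the $C_2$ of Lemma \ref{bilinearestlemma} into a single $C_3=C_3(\beta_1,\beta_2)$ gives \eqref{Znonlinearestimate}.

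For the Lipschitz-type estimates \eqref{Znonlinearestimate2}--\eqref{Znonlinearestimate3} I would exploit bilinearity directly. Writing
\[
P(u,v)-P(w,z)=A(u-w)(u+w)+B\big[(u-w)v+w(v-z)\big]+\frac{C}{2}(v-z)(v+z),
\]
and the analogous identity for $Q$, every summand is again one of the monomials handled above, but now with one factor a difference and the other a sum. Applying to each summand the same bilinear (or classical) estimate as in the second step, the difference factor reproduces $\|(u,v)-(w,z)\|_{\mathcal{Z}}$ while the sum factor is dominated by $\|(u,v)\|_{\mathcal{Z}}+\|(w,z)\|_{\mathcal{Z}}$, and collecting the pieces gives \eqref{Znonlinearestimate2} and \eqref{Znonlinearestimate3}. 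The gain $T^\theta$ is inherited from the time-localized estimates at each application, since $0<T<1$.

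I expect the only non-routine part to be the index bookkeeping of the second step: Lemma \ref{bilinearestlemma} as stated covers only particular configurations of input and output dispersions, so the terms $\partial_x(uv)\in\mathcal{Z}_1$ and $\partial_x(u^2)\in\mathcal{Z}_2$ cannot be read off verbatim and must be recovered by re-invoking the lemma with $\beta_1$ and $\beta_2$ exchanged. The key point to get right is that the hypothesis $\frac{\beta_2}{\beta_1}<0$---rather than the one-sided $\frac{\beta_1}{\beta_2}<\frac14$ of Lemma \ref{bilinearestlemma}---is precisely what makes both orderings admissible; once this is verified, the rest is a finite summation over monomials and a routine propagation of the $T^\theta$ factor.
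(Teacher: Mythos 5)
Your proposal is correct and follows essentially the same route as the paper: reduce \eqref{Znonlinearestimate} to the bilinear estimates of Lemma \ref{bilinearestlemma} (plus the classical single-dispersion estimate for the pure monomials $\partial_x(u^2)$ and $\partial_x(v^2)$ in their own spaces), and obtain \eqref{Znonlinearestimate2}--\eqref{Znonlinearestimate3} from the identity $P(u,v)-P(w,z)=A(u-w)(u+w)+B(u-w)v+B(v-z)w+\frac{C}{2}(v-z)(v+z)$, which is exactly the decomposition the paper uses. Your monomial-by-monomial bookkeeping of which case of Lemma \ref{bilinearestlemma} applies, and the remark that $\frac{\beta_2}{\beta_1}<0$ is what licenses invoking the lemma with $\beta_1$ and $\beta_2$ interchanged, is in fact more explicit than the paper's proof, which passes over these case distinctions silently.
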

\begin{proof}
First of all, \eqref{Znonlinearestimate} is a direct consequence of Lemma \ref{bilinearestlemma}. Just applying \eqref{bilinearestimate} for $P$ and $Q$ provides us the existence of positive constants $C_{3}$, $ \theta $ and $\epsilon$ such that 
\begin{equation*}
\begin{split}
\|\partial_{x}(P(u,v), Q(u,v))\|_{\mathcal{Z}} \leq &  \ C_{2} T^{\theta} \left( |A|\|u\|_{\mathcal{Z}_1} +|B| \|u\|_{\mathcal{Z}_1}\|v\|_{\mathcal{Z}_2}+ \frac{|C|}{2}\|v\|_{\mathcal{Z}_2} + |D|\|v\|_{\mathcal{Z}_2}\right. \\
& \left. + |C|\|u\|_{\mathcal{Z}_1}\|v\|_{\mathcal{Z}_2} + \frac{|B|}{2}\|u\|_{\mathcal{Z}_1} \right) \\
\leq & \ C_{3}T^{\theta}\|(u,v)\|_{X_{s, \frac{1}{2}}^{\beta_{1} , \gamma_{1},T} \times X_{s, \frac{1}{2}}^{\beta_{2}, \gamma_{2},T}},
\end{split}
\end{equation*}
whenever $|\gamma_{1}| + |\gamma_{2}| < \epsilon$, where $C_{3} = C_{2} \cdot 2 \max \{ |A|, |B|, |C|, |D| \}$.

Let us now prove \eqref{Znonlinearestimate2}. As the proof of \eqref{Znonlinearestimate3} is analogous we will omit it.  Note that we can write
\begin{equation*}
P(u,v) - P(w,z) = A (u-w)(u + w) + B (u-w)v + B(v- z)w + \frac{C}{2}(v - z)(v + z)
\end{equation*}
Thus, gain by \eqref{bilinearestimate}, we get that
\begin{equation*}
\begin{split}
\|\partial_{x}( P(u, v) - P(w, z))\|_{\mathcal{Z}_1} \leq &  C_{3}T^{\theta} \left( \|(u-w)\|_{\mathcal{Z}_1} \|u+w\|_{\mathcal{Z}_1} + \|u - w\|_{\mathcal{Z}_1}\|v\|_{\mathcal{Z}_2}\right.\\& + \|v-z\|_{\mathcal{Z}_2}\|w\|_{\mathcal{Z}_1}+ \left. \|v-z\|_{\mathcal{Z}_2} + \|v+z\|_{\mathcal{Z}_2} \right),
\end{split}
\end{equation*}
which implies
\begin{equation*}
\begin{split}
\|\partial_{x}( P(u, v) - P(w, z))\|_{\mathcal{Z}_1}  \leq  &  C_{3}T^{\theta}  \|(u,v) -(w-z)\|_{\mathcal{Z}}\left( \|(u,v)\|_{\mathcal{Z}} + \|(w,z)\|_{\mathcal{Z}} \right).
\end{split}
\end{equation*}
Therefore, \eqref{Znonlinearestimate2} and \eqref{Znonlinearestimate3} is verified and the proof of the lemma is complete.
\end{proof}

\subsection{Local well-posedness}  Throughout the article, from now on, we will consider the following notations
$$\mathcal{Z}_{s,b}:=Z^{1, \mu , T}_{s,b} \times Z^{\alpha, \zeta, T}_{s,b}, \quad \mathcal{Z}^1_{s,b}:=Z^{1, \mu , T}_{s,b}, \quad \mathcal{Z}^{\alpha}_{s,b}:= Z^{\alpha, \zeta, T}_{s,b},$$
and
$$\mathcal{X}_{s,b}:=X^{1, \mu , T}_{s,b} \times X^{\alpha, \zeta, T}_{s,b}, \quad \mathcal{X}^1_{s,b}:=X^{1, \mu , T}_{s,b}, \quad \mathcal{X}^{\alpha}_{s,b}:= X^{\alpha, \zeta, T}_{s,b}.$$
Additionally, when $b=\frac{1}{2}$, we will denote 
$$\mathcal{Z}_{s}:=Z^{1, \mu , T}_{s,\frac{1}{2}} \times Z^{\alpha, \zeta, T}_{s,\frac{1}{2}}, \quad \mathcal{Z}^1_{s}:=Z^{1, \mu , T}_{s,\frac{1}{2}}, \quad \mathcal{Z}^{\alpha}_{s}:= Z^{\alpha, \zeta, T}_{s,\frac{1}{2}},$$
and
$$\mathcal{X}_{s}:=X^{1, \mu , T}_{s,\frac{1}{2}} \times X^{\alpha, \zeta, T}_{s,\frac{1}{2}}, \quad \mathcal{X}^1_{s}:=X^{1, \mu , T}_{s,\frac{1}{2}}, \quad \mathcal{X}^{\alpha}_{s}:= X^{\alpha, \zeta, T}_{s,\frac{1}{2}}.$$

 Let us now consider the IVP \eqref{systemcontrollocal_ns_int}.  For given $\lambda>0,$ let us define
$$
L_{\beta, \gamma, \lambda} \phi=\int_{0}^{1} e^{-2 \lambda \tau} S^{\beta, \gamma}(-\tau) G G^{*} S^{*}(-\tau) \phi d \tau,
$$
for any $\phi \in H^{s}(\mathbb{T})$ and  $s\geq0$. Clearly, $L_{\lambda}$ is a bounded linear operator from $H_{0}^{s}(\mathbb{T})$ to $H_{0}^{s}(\mathbb{T})$. Moreover, $L_{\beta, \gamma, \lambda}$ is a self-adjoint positive operator on $L_{0}^{2}(\mathbb{T})$, and so is its inverse $L_{\beta, \gamma , \lambda}^{-1}$. Therefore $L_{\beta, \gamma, \lambda}$ is an isomorphism from $L_{0}^2(\mathbb{T})$ onto itself, and the same is true on $H_{0}^{s}(\mathbb{T})$, with  $s\geq0$ (see, for instance, \cite[Lemma 2.4]{laurent2010}).

With these information in hand, choose the two feedback controls $f=-G^*L_{\beta, \gamma, \lambda}^{-1}u$ and $h=-G^*L_{\beta, \gamma, \lambda}^{-1}v,$
in \eqref{systemcontrollocal_ns_int}, to transform this system in a resulting closed-loop system reads as follows
\begin{equation}\label{PVIsystem}
\begin{cases}
\partial_{t} u + \partial_{x}^{3} u + \mu \partial_{x} u + \eta \partial_{x} v +   \partial_{x}P(u,v) = -K_{1, \mu , \lambda}u,&  \ x \in \mathbb{T},\ t \in \mathbb{R},\\
\partial_{t} v + \alpha \partial_{x}^{3} v + \zeta \partial_{x} u + \eta \partial_{x} u +  \partial_{x} Q(u,v) = -K_{\alpha, \zeta, \lambda}v,&  \ x \in \mathbb{T},\ t \in \mathbb{R},\\
(u(x,0), v(x,0)) = (u_{0}(x), v_{0}(x)), &  \ x \in \mathbb{T},
\end{cases}
\end{equation}
with $K_{\beta, \gamma , \lambda}:=GG^*L_{\beta, \gamma, \lambda}^{-1}$ for $\beta = 1$ and $\gamma = \mu$, and for $\beta=\alpha$ and $\gamma= \eta$.  If $\lambda=0$, we have $K_0=GG^{*}$.

\vspace{0.1cm}

We will prove that the IVP \eqref{PVIsystem}  is well-posed in the spaces $H_{0}^s(\mathbb{T}) \times H_{0}^s(\mathbb{T}) $, for $s\geq0$. To prove it, we will borrow the following lemma shown in \cite[Lemma 4.2]{laurent2010} for the case $\beta = 1$ and $\gamma = \mu > 0$. The proof in the general case, presented below, is similar to the one made there and will be omitted.

\begin{lemma}\label{controlbourgainestimate}
For any $\widetilde{\epsilon} > 0$ and $\phi \in Z_{s, \frac{1}{2}}^{\beta, \gamma, T} $ there exists a positive constant $C(\widetilde{\epsilon})>0$ such that
\begin{equation*}
\left\| \int_{0}^{t} S^{\beta , \gamma}(t- \tau) (K_{\lambda} \phi) (\tau) d \tau \right\|_{Z_{s, \frac{1}{2}}^{\beta, \gamma, T}} \leq C(\widetilde{\epsilon}) T^{1 -\widetilde{ \epsilon}}\|\phi\|_{Z_{s, \frac{1}{2}}^{\beta, \gamma , T}}.
\end{equation*}
\end{lemma}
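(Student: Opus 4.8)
The plan is to split the estimate into two independent ingredients: an \emph{endpoint Duhamel estimate} coming from the linear theory, and a \emph{mapping property} of the damping operator $K_\lambda = GG^{*}L_{\beta,\gamma,\lambda}^{-1}$ that trades the difference of the time-regularity indices $b=\tfrac12$ and $b=-\tfrac12$ for powers of $T$. First I would reduce the bound on the Duhamel term to a forcing estimate. By the inhomogeneous linear estimates collected in Lemma \ref{bourgainkdv-estimates}(ii)--(iii) together with the very definition of the companion spaces $Z_{s,b}^{\beta,\gamma}$ (which are built precisely so that the Duhamel map is bounded at the critical index $b=\tfrac12$), one has
$$\left\| \int_{0}^{t} S^{\beta,\gamma}(t-\tau)(K_\lambda\phi)(\tau)\,d\tau \right\|_{Z_{s,\frac12}^{\beta,\gamma,T}} \leq C_0\, \|K_\lambda\phi\|_{Z_{s,-\frac12}^{\beta,\gamma,T}}.$$
Thus it suffices to prove $\|K_\lambda\phi\|_{Z_{s,-\frac12}^{\beta,\gamma,T}} \leq C(\widetilde\epsilon)\,T^{1-\widetilde\epsilon}\,\|\phi\|_{Z_{s,\frac12}^{\beta,\gamma,T}}$.

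The crucial structural observation is that $K_\lambda$ is \emph{time independent and acts only in the $x$-variable}, and, by Remark \ref{Gproperties} together with the isomorphism property of $L_{\beta,\gamma,\lambda}$ on $H_0^{s}(\mathbb{T})$ recalled just above, it is bounded on $H_0^{s}(\mathbb{T})$. Since the dispersive weight is trivial at $b=0$, the space $X_{s,0}^{\beta,\gamma,T}$ coincides with $L^{2}([0,T];H^{s}(\mathbb{T}))$, and therefore
$$\|K_\lambda\phi\|_{X_{s,0}^{\beta,\gamma,T}} = \|K_\lambda\phi\|_{L^{2}([0,T];H^{s}(\mathbb{T}))} \leq C\,\|\phi\|_{L^{2}([0,T];H^{s}(\mathbb{T}))} = C\,\|\phi\|_{X_{s,0}^{\beta,\gamma,T}}.$$
I route the whole argument through $b=0$ precisely because $K_\lambda$ is not a Fourier multiplier in $x$ and hence is not expected to be bounded on $X_{s,b}^{\beta,\gamma}$ for $b\neq 0$, where the weight $\langle\tau-\phi^{\beta,\gamma}(k)\rangle^{b}$ couples the spatial frequency to the modulation.

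Next I would invoke the standard time-localization estimate, valid for any fixed dispersion $\phi^{\beta,\gamma}$: for $-\tfrac12<b'\le b<\tfrac12$ and $0<T\le 1$ one has $\|w\|_{X_{s,b'}^{\beta,\gamma,T}}\le C\,T^{b-b'}\|w\|_{X_{s,b}^{\beta,\gamma,T}}$. Using monotonicity of the restriction norm in $b$ and applying this estimate with a small $\delta>0$ on each side of $b=0$, the gain is split into two halves and recombined:
$$\|K_\lambda\phi\|_{X_{s,-\frac12}^{\beta,\gamma,T}} \leq \|K_\lambda\phi\|_{X_{s,-\frac12+\delta}^{\beta,\gamma,T}} \leq C\,T^{\frac12-\delta}\|K_\lambda\phi\|_{X_{s,0}^{\beta,\gamma,T}} \leq C\,T^{\frac12-\delta}\|\phi\|_{X_{s,0}^{\beta,\gamma,T}} \leq C\,T^{1-2\delta}\|\phi\|_{X_{s,\frac12}^{\beta,\gamma,T}},$$
where the third inequality is the $b=0$ boundedness of $K_\lambda$ and the last is a second localization from $b=\tfrac12^{-}$ down to $0$. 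Setting $2\delta=\widetilde\epsilon$ treats the $X_{s,-\frac12}$ part of the $Z_{s,-\frac12}$-norm. The companion $Y_{s,-1}$ part is dominated by the $X_{s,-\frac12+\delta}$-norm via Cauchy--Schwarz in $\tau$, since $\langle\tau-\phi^{\beta,\gamma}(k)\rangle^{-\frac12-\delta}\in L^{2}_\tau$, so the identical chain of inequalities applies to it.

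The only delicate point, and the one I expect to require care, is the endpoint behavior at $b=\pm\tfrac12$: the localization lemma produces a positive power of $T$ only strictly inside the open interval $(-\tfrac12,\tfrac12)$, which is exactly what forces the loss $\widetilde\epsilon$ and yields the exponent $1-\widetilde\epsilon$ rather than $1$. Everything else is soft---the reduction to a forcing estimate is pure linear theory, and the mapping behavior of $K_\lambda$ collapses to its $H_0^{s}$-boundedness, already available. Consequently the main obstacle is bookkeeping: stating the localization lemma with the correct open range of indices and verifying that passing through $b=0$, where $K_\lambda$ is manifestly bounded, legitimately couples the two endpoint gains into the single factor $T^{1-\widetilde\epsilon}$.
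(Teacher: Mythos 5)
Your argument is correct and is essentially the standard proof: the paper itself omits the proof of Lemma \ref{controlbourgainestimate}, deferring to \cite[Lemma 4.2]{laurent2010}, and the argument given there is exactly your chain --- reduce via the $Z$-space Duhamel estimate to bounding $\|K_{\lambda}\phi\|_{Z_{s,-1/2}^{\beta,\gamma,T}}$, exploit that $K_{\lambda}$ is bounded on $X_{s,0}^{\beta,\gamma,T}=L^{2}([0,T];H^{s}_0(\mathbb{T}))$, and recover $T^{1-\widetilde\epsilon}$ from two applications of the time-localization estimate on either side of $b=0$, with the $Y_{s,-1}$ component absorbed by Cauchy--Schwarz in $\tau$. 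No gaps.
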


The next local well-posedness result is a consequence of Lemmas \ref{bourgainkdv-estimates},  \ref{contralbourgainestimate2} and \ref{controlbourgainestimate}, and its proof is classical, so we will omit it.

\begin{theorem}\label{lwptheorem}
Let $\lambda \geq 0$ and $s \geq 0$ be given. Then, there exists $\epsilon = \epsilon(\alpha)$ with $|\mu| + |\zeta| < \epsilon$ such that for $T > 0 $, small enough, and any $(u_{0}, v_{0}) \in H_{0}^{s}(\mathbb{T}) \times H_{0}^{s}(\mathbb{T})$  there exists a unique solution $(u,v)$ of \eqref{PVIsystem} in the class 
\begin{equation}\label{solutionclass}
(u,v)\in \mathcal{X} := \mathcal{Z}^{1}_{s} \cap  C([0,T]; L_{0}^{2}(\mathbb{T}))   \times \mathcal{Z}^{\alpha}_s\cap C([0,T]; L_{0}^{2}(\mathbb{T})).
\end{equation}
Furthermore, the following estimate holds
\begin{equation}\label{wpestimate}
\|(u,v)\|_{\mathcal{Z}_s} \leq a_{T,s} (\|(u_{0}, v_{0})\|)\|(u_{0}, v_{0})\|_{H^{s}(\mathbb{T}) \times H^{s}(\mathbb{T})},
\end{equation}
where $a_{s,T}\, :  \mathbb{R}^{+} \longrightarrow \mathbb{R}^{+}$ is a nondecreasing continuous function depending only of $T$, $s$ and constants $\alpha, \mu, \zeta$. 

In addition, for any $T_{0} \in (0,T)$ there exists a neighborhood $U_{0}$ of $(u_{0}, v_{0})$ such that the application $(u_{0}, v_{0})\longmapsto (u,v)$ from $U_{0}$ into $\mathcal{X}$ is Lipschitz.
\end{theorem}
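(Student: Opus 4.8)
The plan is to solve the closed-loop system \eqref{PVIsystem} by a contraction mapping argument in the Bourgain space $\mathcal{Z}_s = \mathcal{Z}^1_s \times \mathcal{Z}^\alpha_s$, which embeds continuously into $C([0,T];H^s_0(\mathbb{T}))\times C([0,T];H^s_0(\mathbb{T}))$ and hence will automatically yield the solution class \eqref{solutionclass}. Using the two groups $S^{1,\mu}(t)$ and $S^{\alpha,\zeta}(t)$ from \eqref{group_n}, the Duhamel formula recasts \eqref{PVIsystem} as the fixed-point problem $(u,v)=\Gamma(u,v)=(\Gamma_1(u,v),\Gamma_2(u,v))$, where
\begin{align*}
\Gamma_1(u,v) &= S^{1,\mu}(t)u_0 - \int_0^t S^{1,\mu}(t-\tau)\big(\eta\,\partial_x v + \partial_x P(u,v) + K_{1,\mu,\lambda}u\big)(\tau)\,d\tau,\\
\Gamma_2(u,v) &= S^{\alpha,\zeta}(t)v_0 - \int_0^t S^{\alpha,\zeta}(t-\tau)\big(\eta\,\partial_x u + \partial_x Q(u,v) + K_{\alpha,\zeta,\lambda}v\big)(\tau)\,d\tau.
\end{align*}
I would work on a closed ball $B_R\subset\mathcal{Z}_s$ of radius $R$ to be chosen and estimate each term produced by $\Gamma$ in the norm of $\mathcal{Z}_s$.

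Each contribution is handled by a result already established in this section. The free evolutions $S^{1,\mu}(t)u_0$ and $S^{\alpha,\zeta}(t)v_0$ are bounded by $C_0\|(u_0,v_0)\|_s$ via Lemma \ref{bourgainkdv-estimates}(i), and the Duhamel integrals are controlled in $\mathcal{Z}_s$ by the $Z_{s,-\frac12}$-norm of their forcing terms through Lemma \ref{bourgainkdv-estimates}(ii)--(iii). The linear coupling terms $\eta\,\partial_x v$ and $\eta\,\partial_x u$ are precisely of cross-dispersion type: since $\alpha<0$ forces $\alpha\neq1$, Lemma \ref{derivativeestimate} gives, for $|\mu|+|\zeta|<\epsilon$, a bound such as $\|\partial_x v\|_{Z^{1,\mu,T}_{s,-\frac12}}\leq C_1 T^\theta\|v\|_{X^{\alpha,\zeta,T}_{s,\frac12}}$ with a gain $T^\theta$. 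The quadratic nonlinearities are estimated by Lemma \ref{contralbourgainestimate2}, yielding $\|\partial_x(P(u,v),Q(u,v))\|\lesssim C_3 T^\theta\|(u,v)\|^2_{\mathcal{Z}_s}$ together with the corresponding Lipschitz estimates \eqref{Znonlinearestimate2}--\eqref{Znonlinearestimate3} for differences. Finally, the damping terms $K_{1,\mu,\lambda}u$ and $K_{\alpha,\zeta,\lambda}v$ are absorbed using Lemma \ref{controlbourgainestimate}, which provides a factor $T^{1-\widetilde\epsilon}$. Collecting these, one obtains $\|\Gamma(u,v)\|_{\mathcal{Z}_s}\leq C_0\|(u_0,v_0)\|_s + C\,T^{\theta}\big(R+R^2\big)$ and a difference bound $\|\Gamma(u,v)-\Gamma(w,z)\|_{\mathcal{Z}_s}\leq C\,T^{\theta}(1+R)\|(u,v)-(w,z)\|_{\mathcal{Z}_s}$.

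To close the argument I would set $R=2C_0\|(u_0,v_0)\|_s$ and then choose $T>0$ small enough, depending on $R$, so that $C\,T^{\theta}(R+R^2)\leq C_0\|(u_0,v_0)\|_s$ and $C\,T^{\theta}(1+R)<\tfrac12$; then $\Gamma$ maps $B_R$ into itself and is a strict contraction, producing the unique fixed point $(u,v)$. The Lipschitz dependence on the data claimed at the end follows by applying the same difference estimate to two fixed points with data $(u_0,v_0)$ and $(\widetilde u_0,\widetilde v_0)$. To obtain the sharper estimate \eqref{wpestimate}, in which the growth factor $a_{T,s}$ depends only on the lower-order norm $\|(u_0,v_0)\|$, I would first run the whole scheme at $s=0$ to fix a lifespan $T$ governed solely by $\|(u_0,v_0)\|_{L^2\times L^2}$, and then, on that fixed interval, reinsert the already-controlled $L^2$ solution into the bilinear estimates to propagate regularity and derive a linear-in-$\|(u_0,v_0)\|_s$ a priori bound.

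The main obstacle, genuinely new compared with the single KdV theory, is that $u$ and $v$ are measured in Bourgain spaces attached to two \emph{distinct} dispersion relations $\phi^{1,\mu}$ and $\phi^{\alpha,\zeta}$, so that both the linear coupling and the quadratic coupling terms mix the two dispersions and cannot be treated by the classical $X_{s,b}$ estimates for a single equation. Overcoming this rests entirely on the cross-dispersion derivative estimate of Lemma \ref{derivativeestimate} and the cross-dispersion bilinear estimates of Lemma \ref{contralbourgainestimate2}, whose hypotheses impose the smallness $|\mu|+|\zeta|<\epsilon$ and the dispersion-ratio conditions; the decisive point is to verify that the standing assumption $\alpha<0$ places the system squarely inside the range of validity of those two lemmas, after which the $T^\theta$ gains make the contraction routine.
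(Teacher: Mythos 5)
Your proposal is correct and follows essentially the same route as the paper: Duhamel reformulation, a contraction mapping in $\mathcal{Z}_s$ using Lemma \ref{bourgainkdv-estimates} for the free and Duhamel terms, Lemma \ref{derivativeestimate} for the cross-dispersion linear coupling, Lemma \ref{contralbourgainestimate2} for the quadratic nonlinearities, and Lemma \ref{controlbourgainestimate} for the feedback terms, followed by the embedding $\mathcal{Z}_s\subset C([0,T];H_0^s(\mathbb{T}))\times C([0,T];H_0^s(\mathbb{T}))$ to recover the continuous-in-time class and the estimate \eqref{wpestimate}. Your closing remarks on running the scheme first at $s=0$ to make the lifespan depend only on the $L^2$ norm anticipate exactly what the paper does in the subsequent global well-posedness argument.
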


\subsection{Global well-posedness} We check that the system \eqref{PVIsystem} is globally well-posed in the space $H^s(\mathbb{T})$, for any $s\geq0$. Precisely, the result can be read as follows.
\begin{theorem}\label{gwptheorem}
Let  $(u_{0}, v_{0}) \in H_{0}^{s}(\mathbb{T}) \times H_{0}^{s}(\mathbb{T})$, for any $s \geq 0$. Then the solution $(u,v)\in \mathcal{X} $ given in Theorem \ref{lwptheorem} can be extended for any $T > 0$ and still satisfies \eqref{wpestimate}.
\end{theorem}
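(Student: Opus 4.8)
The plan is to globalize the local solution of Theorem~\ref{lwptheorem} by the classical mechanism of a controlled low-regularity norm combined with a local existence time governed only by that norm. Since \eqref{PVIsystem} is posed at the $L^2$ level and the local bound \eqref{wpestimate} has constant $a_{T,s}$ depending only on the $L^2$-size $\|(u_0,v_0)\|$ of the data, it suffices to produce an a priori bound on $\|(u(t),v(t))\|$ valid on an arbitrary time interval and then to iterate the local theory with a uniform time step.

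First I would establish the $L^2$ energy estimate. Testing the first equation of \eqref{PVIsystem} against $u$, the second against $v$, and integrating over $\mathbb{T}$, every linear term drops out: the dispersive and transport contributions vanish by periodicity, and the coupling terms cancel through $\eta\int_{\mathbb{T}}(u\,\partial_x v + v\,\partial_x u)\,dx = \eta\int_{\mathbb{T}}\partial_x(uv)\,dx = 0$. A direct computation using the special form \eqref{nonlinearity_n} of $P$ and $Q$ shows that the nonlinear contribution $\int_{\mathbb{T}}\bigl(u\,\partial_x P(u,v) + v\,\partial_x Q(u,v)\bigr)\,dx$ also vanishes: the genuinely cubic pieces integrate to zero and the mixed $B$- and $C$-terms cancel against each other, which is exactly the energy-conserving arrangement of the coefficients in \eqref{nonlinearity_n}. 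Hence only the feedback terms survive, giving
\begin{equation*}
\tfrac{1}{2}\tfrac{d}{dt}\bigl(\|u\|^2 + \|v\|^2\bigr) = -\langle K_{1,\mu,\lambda}u, u\rangle - \langle K_{\alpha,\zeta,\lambda}v, v\rangle.
\end{equation*}
Because $G$, $G^{*}$ and $L_{\beta,\gamma,\lambda}^{-1}$ are bounded on $L^2_0(\mathbb{T})$, the operators $K_{\beta,\gamma,\lambda}$ are bounded, so the right-hand side is dominated by $C(\|u\|^2+\|v\|^2)$; Gronwall then yields $\|(u(t),v(t))\| \leq \|(u_0,v_0)\|\,e^{Ct}$ on the interval of existence (the feedback is in fact designed to be dissipative, so this norm is even non-increasing, but boundedness of $K$ is all that is needed). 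The identity is first derived for smooth data, then extended to $L^2_0$ data by density together with the Lipschitz dependence supplied by Theorem~\ref{lwptheorem}.

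Next I would run the continuation. Fix $T>0$. The estimate above gives $M := \sup_{[0,T]}\|(u(t),v(t))\| \leq \|(u_0,v_0)\|\,e^{CT} < \infty$, and the local theory provides an existence time $\delta = \delta(M)>0$ depending only on this $L^2$-size. Re-solving on $[0,\delta],[\delta,2\delta],\dots$ with $(u(k\delta),v(k\delta))$—of $L^2$ norm $\le M$—as new data extends the solution to $[0,T]$ in $N=\lceil T/\delta\rceil$ steps, proving global existence for $s=0$. The decisive feature of the Bourgain framework, encoded in the gain $T^{\theta}$ of Lemmas~\ref{bilinearestlemma} and \ref{contralbourgainestimate2}, is that the local existence time at regularity $s$ still depends only on the $L^2$ norm, while \eqref{wpestimate} is linear in the $H^s$ norm. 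Thus on each subinterval $\|(u,v)\|_{\mathcal{Z}_s} \leq a_{\delta,s}(M)\,\|(u(k\delta),v(k\delta))\|_s$ with the \emph{same} constant $c:=a_{\delta,s}(M)$, so $\|(u(k\delta),v(k\delta))\|_s \leq c^{k}\|(u_0,v_0)\|_s$ remains finite over the finitely many steps; summing the per-interval bounds yields a global solution in $\mathcal{X}$ satisfying \eqref{wpestimate} with $a_{T,s}$ now a function of $\|(u_0,v_0)\|$ through $M$ and of $T$.

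The main obstacle is the energy estimate of the second step: verifying that the quadratic structure of $P$ and $Q$ in \eqref{nonlinearity_n} is precisely the one for which the $L^2$ energy is conserved, so that the a priori bound closes, and justifying the resulting identity at $L^2$ regularity rather than only for smooth solutions. Once the uniform-in-time $L^2$ control is secured, the continuation and the persistence of $H^s$-regularity follow routinely from the subcritical local theory, whose existence time is dictated solely by the $L^2$ norm.
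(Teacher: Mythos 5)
Your $s=0$ argument coincides with the paper's: the same cancellation $\int_{\mathbb{T}}\bigl(u\,\partial_xP(u,v)+v\,\partial_xQ(u,v)\bigr)dx=0$ (the paper's identity \eqref{identity4}), the vanishing of the $\eta$-coupling terms, boundedness of $K_{\beta,\gamma,\lambda}$, and Gr\"onwall, yielding \eqref{bounded2}. That part is fine.

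For $s>0$, however, your argument rests on the assertion that ``the local existence time at regularity $s$ still depends only on the $L^2$ norm, while \eqref{wpestimate} is linear in the $H^s$ norm,'' and this is not a consequence of the lemmas available. Lemma \ref{bilinearestlemma} gives $\|\partial_x(uv)\|_{Z^{\beta_2,\gamma_2,T}_{s,-1/2}}\le C_2T^{\theta}\|u\|_{X^{\beta_1,\gamma_1,T}_{s,1/2}}\|v\|_{X^{\beta_2,\gamma_2,T}_{s,1/2}}$, i.e.\ it is quadratic in the $H^s$-level norms; to close the contraction in a ball of radius $R_s\sim\|(u_0,v_0)\|_s$ one needs $T^{\theta}R_s$ small, so the time step a priori shrinks with the $H^s$ norm. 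To make the step depend only on the $L^2$ norm you would need a \emph{tame} bilinear estimate of the form $\|\partial_x(uv)\|_{Z_{s,-1/2}}\lesssim T^{\theta}\bigl(\|u\|_{X_s}\|v\|_{X_0}+\|u\|_{X_0}\|v\|_{X_s}\bigr)$, which the paper does not prove. Without it your iteration is circular: you need the $H^s$ norm bounded over $[0,T]$ to guarantee finitely many steps, but the bound on the $H^s$ norm is what the iteration is supposed to produce. The paper circumvents this differently: it treats $s=3$ by differentiating the system in time, noting that $(\widetilde u,\widetilde v)=(\partial_tu,\partial_tv)$ solves a system whose nonlinearity $(\widetilde P,\widetilde Q)$ is \emph{linear} in $(\widetilde u,\widetilde v)$, so the estimate $\|\partial_x(\widetilde P,\widetilde Q)\|_{\mathcal{Z}_s}\le 2C_3T^{\theta}\|(u,v)\|_{\mathcal{Z}_s}\|(\widetilde u,\widetilde v)\|_{\mathcal{Z}_s}$ closes with a time step governed by $\|(u,v)\|_{\mathcal{Z}_0}\le 2C_0\|(u_0,v_0)\|$ alone; it then recovers $\|\partial_x^3u\|$ algebraically from the equation via Gagliardo--Nirenberg, and passes to general $s\ge0$ by nonlinear interpolation as in \cite{BonaSunZhang2001}. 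You should either prove the tame estimate you are implicitly using, or adopt the time-differentiation-plus-interpolation route.
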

\begin{proof}
Assume first $s =0$. Multiplying the first equation of \eqref{PVIsystem} by $u$ and the second one by $v$, integrating on $\mathbb{T}\times(0,t)$, for $t\geq0$, we have
\begin{equation*}
\begin{split}
\|(u(\cdot, t), v(\cdot , t))\|^{2} \leq &\  2\|G\|^{2}\|\left[ L_{1, \mu, \lambda}^{-1}\|+ \|L_{\alpha, \zeta, \lambda}^{-1}\|\right] \int_{0}^{t} \|(u,v)(\cdot, \tau)\|^{2},
\end{split}
\end{equation*}
since $G$ and $L_{\beta , \gamma , \lambda}^{-1}$ are continuous in $L^{2}(\mathbb{T})$ and
\begin{equation}\label{identity4}
\begin{split}
\int_{\mathbb{T}}\partial_{x}P(u,v)u + \partial_{x}Q(u,v)v =  \frac{2}{3}  \int_{\mathbb{T}}\frac{d}{dx} \left[ (Au^{3} + Bv^{3}) + B u^{2}v + Cu v^{2} \right]=0.
\end{split}
\end{equation}
Using Gr\"onwall's inequality holds that
\begin{equation}\label{bounded2}
\|(u(\cdot, t), v(\cdot , t))\|^{2}   \leq \|(u_{0}, v_{0})\|^{2}e^{C_{5}t},
\end{equation}
with $C_{5} =  2\|G\|^{2}\left[\|L_{1, \mu , \lambda}^{-1}\| + \|L_{\alpha, \zeta , \lambda}^{-1}\|\right] $. In particular, for $\lambda =0$, from the energy identity, we get
\begin{equation}\label{nonincreasing}
\frac{1}{2}\frac{d}{dt}\|(u(\cdot, t), v(\cdot , t))\|^{2} = - \|(Gu, Gv)\|^{2} \leq 0
\end{equation}
and 
\begin{equation*}
\|(u(\cdot, t), v(\cdot , t))\|^{2} \leq \|(u_{0}, v_{0})\|^{2},
\end{equation*}
which ensures that \eqref{PVIsystem} is globally well-posed in $L_{0}^{2}(\mathbb{T}) \times L_{0}^{2}(\mathbb{T})$. 

Next, we show that \eqref{PVIsystem}  is globally well-posed in the space $H_{0}^{3}(\mathbb{T}) \times H_{0}^{3}(\mathbb{T})$. For a smooth solution $(u,v)$ of \eqref{PVIsystem}, let $(\widetilde{u}, \widetilde{v}) =  (\partial_{t}u, \partial_{t}v)$. Then
\begin{equation*}
\begin{cases}
\partial_{t} \widetilde{u} + \partial_{x}^{3} \widetilde u + \mu \partial_{x}\widetilde{u} + \eta \widetilde{v} + \partial_{x} \widetilde{P}(\widetilde{u}, \widetilde{v}) = - K_{\lambda}\widetilde{u},&  \ x \in \mathbb{T},\ t \in \mathbb{R},\\
\partial_{t} \widetilde{v} + \alpha \partial_{x}^{3} \widetilde v + \zeta \partial_{x}\widetilde{v} + \eta \widetilde{u} + \partial_{x} \widetilde{Q}(\widetilde{u}, \widetilde{v}) = - K_{\lambda}\widetilde{v},&  \ x \in \mathbb{T},\ t \in \mathbb{R},\\
(\widetilde{u}(x,0), \widetilde{v}(x,0)) = (\widetilde{u}_{0}, \widetilde{v}_{0}),&  \ x \in \mathbb{T},
\end{cases}
\end{equation*}
where
\begin{equation*}
\begin{split}
\widetilde{P}(\widetilde{u}, \widetilde{v}) &= 2A u \widetilde{u} + B \widetilde{u}v + B u \widetilde{v} + C v \widetilde{v},\\
\widetilde{Q}(\widetilde{u}, \widetilde{v}) &= 2D v \widetilde{v} + C \widetilde{v}u + C v \widetilde{u} + B u \widetilde{u},\\
\widetilde{u}_{0} &= - K_{\lambda}u_{0} - u_{0}''' - \mu u_{0}' - \eta v_{0}' - P'(u_{0}, v_{0}),\\
\widetilde{v}_{0} &= - K_{\lambda}v_{0} - v_{0}''' - \zeta v_{0}' - \eta u_{0}' - Q'(u_{0}, v_{0}),
\end{split}
\end{equation*}
with $" \ ' \ "$ denoting here the derivative with respect to variable $x$. Observe that
\begin{equation*}
\begin{split}
\|P'(u_{0}, v_{0}), Q'(u_{0}, v_{0}
 )\|  \leq & \frac{2C_{3}}{C_{2}}\|(u_{0}, v_{0})\|\|(u_{0}', v_{0}')\|_{L^{\infty}(\mathbb{T}) \times L^{\infty}(\mathbb{T})} \\
 \leq & \frac{2C_{3}}{C_{2}}\|(u_{0}, v_{0})\|C_{5}\|(u_{0}, v_{0})\|^{\frac{1}{2}}\|(\partial_{x}^{3}u_{0}, \partial_{x}^{3}v_{0})\|^{\frac{1}{2}}\\
 \leq & \frac{2C_{3}C_{5}}{C_{2}}\|(u_{0}, v_{0})\|^{\frac{3}{2}}\|(u_{0}, v_{0})\|_{3}^{\frac{1}{2}},
\end{split}
\end{equation*}
where $C_{5}$ is due to the Gagliardo-Nirenberg inequality. So, we have
\begin{equation}\label{boundedinitialdata}
\begin{split}
 \|(\widetilde{u}_{0}, \widetilde{v}_{0})\| \leq & \|(K_{\lambda}u_{0}, K_{\lambda}v_{0})\| +C_{6} \|(u_{0},v_{0})\|_{3} + 2( |\mu| + |\eta| + |\zeta|)\|(u_{0}, v_{0})\|_{1}\\&\ + \|P'(u_{0}, v_{0}), Q'(u_{0}, v_{0}
 )\|\\
  \leq & \|(u_{0}, v_{0})\| +  2( |\mu| + |\eta| + |\zeta|)\|(u_{0}, v_{0})\|_{1} + (1+C_{6})\|(u_{0}, v_{0})\|_{3} \\ & \ +  \left( \frac{C_{3}C_{5}}{C_{2}}\right)^{2} \|(u_{0}, v_{0})\|^{3}.
\end{split}
\end{equation}
where $C_{6} =  \max\{1, |\alpha|\}$. Since, 
\begin{equation*}
\begin{split}
 \|\partial_{x}( \widetilde{P}(\widetilde{u}, \widetilde{v}), \widetilde{Q}(\widetilde{u}, \widetilde{v}))\|_{\mathcal{Z}_s}   \leq & 2 C_{3}T^{\theta}\|(u,v)\|_{\mathcal{Z}_s}\|(\widetilde{u}, \widetilde{v})\|_{\mathcal{Z}_s},
\end{split}
\end{equation*}
with $|\mu| + |\zeta| < \epsilon $, for some $\epsilon$ that depends only $\alpha$, we obtain from \eqref{boundedinitialdata} that
\begin{equation*}
\begin{split}
\|(\widetilde{u}, \widetilde{v})\|_{\mathcal{Z}_0} \leq &    \ C_{0}\|(\widetilde{u}_{0}, \widetilde{v}_{0})\| + (C(\widetilde{\epsilon})T^{1-\widetilde{\epsilon}} + C_{1}T^{\theta})\|(\widetilde{u},\widetilde{v})\|_{\mathcal{Z}_0}  + 2C_{0} C_{3}T^{\theta} \|(u,v)\|_{\mathcal{Z}_0} \|(\widetilde{u},\widetilde{v})\|_{\mathcal{Z}_0}.
 \end{split}
\end{equation*}
Choosing $T$ as
\begin{equation*}
    C(\epsilon) T^{1- \widetilde{\epsilon}}+ C_{1}T^{\theta} + 2C_{0}C_{3} T^{\theta} d < \frac{1}{2},
\end{equation*} 
we have
\begin{equation*}
\|(u,v)\|_{\mathcal{Z}_0} \leq d = 2C_{0}\|(u_{0}, v_{0})\| 
\end{equation*}
and consequently
\begin{equation*}
\begin{split}
\|(\widetilde{u}, \widetilde{v})\|_{\mathcal{Z}_0} \leq     C_{0} \|(\widetilde{u}_{0}, \widetilde{v}_{0})\| + (C(\epsilon)T^{1-\epsilon} + C_{1}T^{\theta} + 4 C_{0}^{2}C_{3}T^{ \theta}  \|(u_{0},v_{0})\|) \|(\widetilde{u},\widetilde{v})\|_{\mathcal{Z}_0}. 
\end{split}
\end{equation*}
Hence, for $T_{1}$ satisfying
\begin{equation*}
C(\epsilon)T_{1}^{1-\epsilon} + C_{1}T^{\theta} + 4C_{0}^{2}C_{3}T_{1}^{\theta}  \|(u_{0},v_{0})\| < \frac{1}{2},
\end{equation*}
we obtain
\begin{equation*}
\|(\widetilde{u}, \widetilde{v})\|_{\mathcal{Z}_0} \leq  2C_{0} \|(\widetilde{u}_{0}, \widetilde{v}_{0})\|.
\end{equation*}
Therefore, for $T_{0} = \min \{ T, T_{1}\}$, we see that
\begin{equation*}
\|(\widetilde{u}, \widetilde{v})\|_{L^{\infty} (0,T_{0}; L^{2}(\mathbb{T}) \times L^{\infty} (0,T_{0}; L^{2}(\mathbb{T})} \leq C_{7} \|(\widetilde{u}, \widetilde{v})\|_{\mathcal{Z}_0} \leq 2C_{0}C_{7}\|(\widetilde{u}_{0}, \widetilde{v}_{0})\| .
\end{equation*}

From the following equations
\begin{equation*}
\begin{cases}
\partial_{x}^{3}u = - \widetilde{u} - \mu \partial_{x}u - \eta \partial_{x} v - \partial_{x}P(u,v) - K_{\lambda}u, \\
\partial_{x}^{3}v = - \widetilde{v} - \zeta \partial_{x}v - \eta \partial_{x} u - \partial_{x}Q(u,v) - K_{\lambda}v,
\end{cases}
\end{equation*}
we infer that
\begin{equation*}
\begin{split}
\|(\partial_{x}^{3} u, \partial_{x}^{3}v)\| \leq & \|(\widetilde{u}, \widetilde{v})\| +  \|(u,v)\| + \left( C_{7}\sqrt{2\pi} + \frac{2C_{3}}{C_{2}}\|(u,v)\| \right)   \|(\partial_{x}u, \partial_{x}v)\|_{L^{\infty}(\mathbb{T}) \times L^{\infty}(\mathbb{T})} \\
\leq &  \|(\widetilde{u}, \widetilde{v})\| +  \|(u,v)\| + C_{6}\left( C_{7}\sqrt{2\pi} + \frac{2C_{3}}{C_{2}}\|(u,v)\| \right)\|(u,v)\|^{\frac{1}{2}}\|(\partial_{x}^{3}u, \partial_{x}^{3}v)\|^{\frac{1}{2}}\\
 \leq &  \|(\widetilde{u}, \widetilde{v})\| + \frac{1}{2}\|(\partial_{x}^{3}u, \partial_{x}^{3}v)\| + \left[ 1 + \frac{C_{6}^{2}}{2} \left( C_{7}\sqrt{2\pi} + \frac{2C_{3}}{C_{2}}\|(u,v)\| \right)^{2}\right]\|(u,v)\|,
\end{split}
\end{equation*}
for $0 < t <  T_{0}$ and $C_{7} =2 (|\mu| + |\eta| + |\zeta|) $. Consequently, since $(\widetilde{u}_{0}, \widetilde{v}_{0})$ satisfies \eqref{boundedinitialdata} , we get that
\begin{equation*}
\|(u,v)\|_{L^{\infty}(0,T;H_{0}^{3}(\mathbb{T})) \times L^{\infty}(0,T;H_{0}^{3}(\mathbb{T})) } \leq a_{T,3}(\|(u_{0},v_{0})\|) \|(u_{0}, v_{0})\|_{3}.
\end{equation*}
Combining to \eqref{bounded2},  this shows that $(u,v) \in C(\mathbb{R}^{+}; H_{0}^{3}(\mathbb{T})) \times C(\mathbb{R}^{+}; H_{0}^{3}(\mathbb{T})) $ and \eqref{wpestimate} holds true for $s=3$. A similar result can be obtained for any $s \in 3 \mathbb{N}^{*}$. Note that for other values of $s$, the global well-posedness follows by nonlinear interpolation as done in \cite{BonaSunZhang2001}. This achieves the result.
\end{proof}

\section{Control results: Nonlinear problems}\label{Sec4} 
\subsection{A local result: Stabilization result} Our first result is local in the sense that the initial data need to be in a small ball in the energy space to ensure that the solution of the system  goes to zero exponentially, for $t$ sufficiently large. The result is the following one.
\begin{theorem}\label{localstability}
Let $0 < \lambda ' < \lambda$ and $s \geq 0$ be given. There exists $\delta > 0$ such that for any $(u_{0}, v_{0}) \in H_{0}^{s}(\mathbb{T}) \times  H_{0}^{s}(\mathbb{T})$ and $\|(u_{0}, v_{0})\|_{s} \leq \delta$, the corresponding solution $(u,v)$ of \eqref{PVIsystem} satisfies
\begin{equation*}
\|(u(\cdot , t), v(\cdot , t))\|_{s} \leq  C e^{- \lambda ' t} \|(u_{0}, v_{0})\|_{s}, \quad \forall t \geq 0,
\end{equation*}
where $C > 0$ is a constant independent of $(u_{0}, v_{0})$.
\end{theorem}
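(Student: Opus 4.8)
The plan is to combine the exponential decay of the \emph{linearized} closed-loop semigroup with a perturbative iteration driven by the bilinear estimates of Lemma~\ref{contralbourgainestimate2} and the local theory of Theorem~\ref{lwptheorem}. Write $W_\lambda(t)$ for the $C_0$-semigroup generated by the closed-loop linear operator $L-K$, where $L$ is the operator \eqref{operatorL} and $K=\mathrm{diag}(K_{1,\mu,\lambda},K_{\alpha,\zeta,\lambda})$. By Duhamel's principle, every solution $(u,v)$ of \eqref{PVIsystem} satisfies
\begin{equation*}
(u,v)(t)=W_\lambda(t)(u_0,v_0)-\int_0^t W_\lambda(t-\tau)\big(\partial_x P(u,v),\partial_x Q(u,v)\big)(\tau)\,d\tau,
\end{equation*}
where the first term carries the dissipation and the integral term is quadratic, hence of higher order for small data. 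The whole argument then reduces to a one-step contraction on a well-chosen interval $[0,T]$, followed by iteration.

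The first ingredient is the linear decay $\|W_\lambda(t)(u_0,v_0)\|_s\le C_\lambda e^{-\lambda t}\|(u_0,v_0)\|_s$ for all $s\ge0$, $t\ge0$. I would establish it through a Lyapunov functional built from the Gramian $L_{\beta,\gamma,\lambda}$: setting $V(t)=\langle L_{1,\mu,\lambda}^{-1}u,u\rangle+\langle L_{\alpha,\zeta,\lambda}^{-1}v,v\rangle$ along the linear flow, the operator identity obtained by integrating $\tfrac{d}{d\tau}\big(e^{-2\lambda\tau}S^{\beta,\gamma}(-\tau)GG^{*}S^{\beta,\gamma}(-\tau)^{*}\big)$ over $[0,1]$ yields the Lyapunov inequality $\dot V\le-2\lambda V$, whence $V(t)\le e^{-2\lambda t}V(0)$; since $L_{\beta,\gamma,\lambda}$ is a self-adjoint isomorphism of $H_0^s(\mathbb{T})$ (as recorded just before Theorem~\ref{lwptheorem}), $V$ is equivalent to the squared norm and the decay follows, the $H^s$ version being analogous. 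This is the step I expect to be the main obstacle, precisely because it is where the observability of the linear system — encoded in the exact controllability of Theorem~\ref{main_A} and in the propagation results of the Appendix — must be turned into coercivity of the feedback operator; here the genuinely coupled spectral structure (two families of eigenvalues) and the standing restrictions $\alpha<0$, $|\mu|+|\zeta|\ll1$ are what make the two feedbacks comparable.

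With the linear decay available, fix $\lambda'<\lambda$ and run the iteration. On $[0,T]$ the local estimate \eqref{wpestimate} gives $\|(u,v)\|_{\mathcal{Z}_s}\le a_{T,s}(\delta)\|(u_0,v_0)\|_s$ whenever $\|(u_0,v_0)\|_s\le\delta$, while the same estimates used in Theorem~\ref{lwptheorem} (Lemmas~\ref{bourgainkdv-estimates}, \ref{contralbourgainestimate2} and \ref{controlbourgainestimate}, the last controlling the $K_\lambda$-part of $W_\lambda$ in the Bourgain norm) bound the $\mathcal{Z}_s$-norm, hence the $C([0,T];H_0^s)$-norm, of the nonlinear Duhamel term by $C T^{\theta}\|(u,v)\|_{\mathcal{Z}_s}^2\le CT^\theta (a_{T,s}(\delta))^2\,\delta\,\|(u_0,v_0)\|_s$. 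Combining with the linear decay,
\begin{equation*}
\|(u,v)(T)\|_s\le\big(C_\lambda e^{-\lambda T}+CT^\theta (a_{T,s}(\delta))^2\delta\big)\|(u_0,v_0)\|_s.
\end{equation*}
Choose first $T$ so large that $C_\lambda e^{-\lambda T}\le\tfrac12 e^{-\lambda'T}$ (possible since $e^{-(\lambda-\lambda')T}\to0$), then $\delta$ so small that $CT^\theta (a_{T,s}(\delta))^2\delta\le\tfrac12 e^{-\lambda'T}$ (possible by continuity of $a_{T,s}$), giving $\|(u,v)(T)\|_s\le e^{-\lambda'T}\|(u_0,v_0)\|_s\le\delta$, so the hypotheses reproduce at time $T$.

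Finally, iterating on the intervals $[nT,(n+1)T]$ and using the time-translation invariance of \eqref{PVIsystem} yields $\|(u,v)(nT)\|_s\le e^{-\lambda'nT}\|(u_0,v_0)\|_s$ for every $n\in\mathbb{N}$. For arbitrary $t$, writing $t=nT+r$ with $0\le r<T$ and bounding the partial interval by the local estimate $\|(u,v)(t)\|_s\le a_{T,s}(\delta)\|(u,v)(nT)\|_s$, one obtains $\|(u,v)(t)\|_s\le Ce^{-\lambda't}\|(u_0,v_0)\|_s$ with $C=a_{T,s}(\delta)e^{\lambda'T}$ independent of $(u_0,v_0)$, which is exactly the claimed estimate.
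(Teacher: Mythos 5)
Your overall strategy --- exponential decay of the closed-loop linear semigroup, a Duhamel perturbation argument on a window $[0,T]$ with $T$ chosen large before $\delta$ is chosen small, and iteration over $[nT,(n+1)T]$ --- is exactly the paper's strategy, and the second half of your argument (the contraction, the choice of constants, the induction and the use of the semigroup property to fill in between the times $nT$) matches the paper's proof step for step. The gap is in the linear ingredient. You take $W_\lambda(t)$ to be the semigroup generated by the \emph{coupled} operator $L-K$ from \eqref{operatorL}, and you propose to prove $\|W_\lambda(t)\|\le C_\lambda e^{-\lambda t}$ via the Lyapunov functional $V=\langle L_{1,\mu,\lambda}^{-1}u,u\rangle+\langle L_{\alpha,\zeta,\lambda}^{-1}v,v\rangle$. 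The Slemrod identity you invoke --- integrating $\tfrac{d}{d\tau}\bigl(e^{-2\lambda\tau}S^{\beta,\gamma}(-\tau)GG^{*}S^{\beta,\gamma}(-\tau)^{*}\bigr)$ over $[0,1]$ --- is built from the \emph{scalar} groups $S^{1,\mu}$ and $S^{\alpha,\zeta}$, so it yields $\dot V\le-2\lambda V$ only along the decoupled flows. Along the coupled flow, differentiating $V$ produces the cross terms $-\eta\bigl(\langle L_{1,\mu,\lambda}^{-1}\partial_x v,u\rangle+\langle L_{1,\mu,\lambda}^{-1}u,\partial_x v\rangle\bigr)$ and their counterparts in the second component; these are not sign-definite, do not cancel (the Gramians do not commute with $\partial_x$ and are different for the two components), and moreover lose a derivative, so the inequality $\dot V\le-2\lambda V$ does not follow. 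This is precisely the difficulty the paper avoids: in \eqref{integral_form} the linear groups are the \emph{decoupled} closed-loop groups $S_\lambda^{\beta,\gamma}(t)=e^{-t(\beta\partial_x^3+\gamma\partial_x+K_\lambda)}$, for which the cited Slemrod/Laurent--Rosier--Zhang decay applies componentwise, while the coupling terms $\eta\partial_x v$ and $\eta\partial_x u$ are pushed into the Duhamel integral and estimated in the Bourgain norms via \eqref{integralequationstabilization21}--\eqref{integralequationstabilization21a} (which rest on Lemma~\ref{derivativeestimate} and hence on the standing hypotheses $\alpha<0$ and $|\mu|+|\zeta|$ small).

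A secondary consequence: if you repair the argument by moving the coupling into the Duhamel term (as the paper does), that term is no longer purely quadratic --- it contains a contribution \emph{linear} in $(u,v)$ --- so it cannot be absorbed merely by shrinking $\delta$; you must use the smallness of the coupling coefficients together with the Bourgain-space gain to close the fixed point in the ball $B_R(0)$, which is the role of Lemmas~\ref{derivativeestimate} and \ref{contralbourgainestimate2} in the paper's version. With that adjustment your perturbation-and-iteration scheme goes through as written.
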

\begin{proof}
Since $K_{\lambda}$ is a bounded operator, the solution of \eqref{PVIsystem} can be rewritten in its integral form
\begin{equation}\label{integral_form}
\begin{split}
u(t) &= S_{\lambda}^{1, \mu}(t)u_{0}  - \eta \int_{0}^{t}S_{\lambda}^{1, \mu}(t - \tau)\partial_{x}v(\tau) d\tau - \int_{0}^{t} S_{\lambda}^{1, \mu}(t - \tau)\partial_{x}P(u,v)(\tau) d\tau\\
v(t) &= S_{\lambda}^{\alpha, \zeta}(t)v_{0}  - \eta \int_{0}^{t}S_{\lambda}^{\alpha, \zeta}(t - \tau)\partial_{x}u(\tau) d\tau - \int_{0}^{t} S_{\lambda}^{\alpha, \zeta}(t - \tau)\partial_{x}Q(u,v)(\tau) d\tau,
\end{split}
\end{equation}
where $S_{\lambda}^{\beta, \gamma}(t) = e^{-t (\beta \partial_{x}^{3} + \gamma \partial_{x} + K_{\lambda})}$ is the group to the linear system associated to \eqref{PVIsystem}. 

Now, let us consider $\alpha, \mu, \zeta$  satisfying the hypothesis of the Lemma \ref{contralbourgainestimate2} and  \ref{controlbourgainestimate}.  Next, using Lemma \ref{bourgainkdv-estimates} twice, Lemmas  \ref{contralbourgainestimate2} and  \ref{controlbourgainestimate} and finally, the fact that $L_{\beta, \gamma,  \lambda}$ is a bounded linear operator from $H_{0}^{s}(\mathbb{T})$ to $H_{0}^{s}(\mathbb{T})$, for all $s\geq 0$, we can guarantee the existence  of a constant $c>0$ such that  the following inequalities are verified
\begin{equation}\label{integralequationstabilization1}
\|S^{\beta, \gamma}_{ \lambda}(t)\phi\|_{Z_{s, \frac{1}{2}}^{\beta, \gamma , T}} \leq   c \|\phi\|_{s},
\end{equation}
for any $ \phi \in H_{0}^{s}(\mathbb{T})$,
\begin{equation}\label{integralequationstabilization21}
 \left\| \int_{0}^{t} S^{1 , \mu}_{ \lambda}(t- \tau) \partial_{x}v(\tau)d \tau \right\|_{\mathcal{Z}^1_{s}}\leq c \|v\|_{\mathcal{Z}^{\alpha}_s},
  \end{equation}
  \begin{equation}\label{integralequationstabilization21a}
 \left\| \int_{0}^{t} S^{\alpha, \zeta}_{\lambda}(t- \tau) \partial_{x}u(\tau)(\tau)d \tau \right\|_{\mathcal{Z}^{\alpha}_s} \leq c \|u\|_{\mathcal{Z}^1_s}
 \end{equation}
for any $ (u, v ) \in \mathcal{Z}_s$,
\begin{equation}\label{integralequationstabilization2}
 \left\| \int_{0}^{t} S^{1 , \mu}_{ \lambda}(t- \tau) \partial_{x}P(u,v)(\tau)d \tau \right\|_{\mathcal{Z}^1_s} \leq c \|(u, v)\|_{\mathcal{Z}_s}^{2},
 \end{equation}
and
\begin{equation}\label{integralequationstabilization3}
 \left\| \int_{0}^{t} S_{\lambda}(t- \tau) \partial_{x}Q(u,v)(\tau)d \tau \right\|_{\mathcal{Z}^{\alpha}_s} \leq c \|(u, v)\|_{\mathcal{Z}_s}^{2},
 \end{equation}
for any  $ (u, v ) \in \mathcal{Z}_s$ with $[u] = [v]=0$.

Now, thanks to the ideas introduced by  \cite[Theorem 2.1]{Slemrod} and \cite[Proposition 2.5]{laurent2010}, for given $s \geq 0$, there exists some  positive constant $M_{s}$ such that
\begin{equation*}
\|(S^{1, \mu}_{\lambda}(t)u_{0}, S^{\alpha, \zeta}_{\lambda}(t)v_{0})\|_{s} \leq M_{s}e^{- \lambda t}\|(u_{0}, v_{0})\|_{s}, \ \forall \ t \ge 0,
\end{equation*}
where $S^{\beta, \gamma}_{\lambda}$ with $\beta = 1$ and $\gamma = \mu$, and for $\beta=\alpha$ and $\gamma= \eta$ are the groups associated to the linear system \eqref{PVIsystem}. Pick $T>0$ such that $2M_{s}e^{-\lambda T} \leq e^{-\lambda ' T}.$ 
We seek a solution $(u,v)$ to the integral equations \eqref{integral_form}, as a fixed point of the following map
\begin{equation*}
\Gamma_{\lambda} (w,z) = (\Gamma_{\lambda}^{1} (w,z), \Gamma_{\lambda}^{2}(w,z)),
\end{equation*}
defined by 
\begin{equation*}
\Gamma_{\lambda}^{1} (w,z) = S^{1, \mu}_{\lambda}(t)u_{0} - \int_{0}^{t} S_{\lambda}^{1, \mu}(t- \tau)\partial_{x}v(\tau)d\tau - \int_{0}^{t}S^{1, \mu}_{\lambda}(t - \tau) (\partial_{x}P(w,z))(\tau) d \tau
\end{equation*}
and
\begin{equation*}
\Gamma_{\lambda}^{2} (w,z) = S^{\alpha, \zeta}_{ \lambda}(t)v_{0} - \int_{0}^{t} S_{\lambda}^{\alpha, \zeta}(t- \tau)\partial_{x}u(\tau)d\tau- \int_{0}^{t}S^{\alpha , \zeta}_{\lambda}(t - \tau) (\partial_{x}Q(w,z))(\tau) d \tau,
\end{equation*}
in some closed ball $$B_{R}(0)\subset  Z_{s}^{1} \cap L^{2}(0,T; L_{0}^{2}(\mathbb{T})) \times  Z_{s}^{\alpha} \cap L^{2}(0,T; L_{0}^{2}(\mathbb{T}))$$ for the $\|(w,z)\|_{\mathcal{Z}_s}$-norm. This will be done provided that $\|(u_{0}, v_{0})\|_{s} \leq \delta,$ where $\delta$ is a small number to be determined. Furthermore, to ensure the exponential stability with the claimed decay rate, the numbers $\delta$ and $R$ will be chosen in such a way that
\begin{equation*}
\|(u(T), v(T))\|_{s} \leq e^{-\lambda ' T}\|(u_{0}, v_{0})\|_{s}.
\end{equation*}

Since $\Gamma_{\lambda}$ is a contraction in $B_{R}(0)$, by a fixed point argument, its unique fixed point $(u,v) \in B_{R}(0)$ fulfills
\begin{equation*}
\|(u(T), v(T))\| = \|\Gamma_{\lambda}(u,v)\|_{s} \leq e^{-\lambda ' T} \delta.
\end{equation*}
Finally, assume that $\|(u_{0},v_{0})\|_{s} < \delta$. Changing $\delta$ into $\delta' : =  \|(u_{0}, v_{0}) \|_{s}$ and $R$ into $R'= \left( \frac{\delta'}{\delta} \right)^{\frac{1}{2}}R$, we infer that $$\|(u(T), v(T))\|_{s} \leq e^{-\lambda' T}\|(u_{0}, v_{0})\|_{s}$$ and by induction yields $$\|(u(nT), v(nT))\|_{s} \leq e^{-\lambda' n T}\|(u_{0}, v_{0})\|_{s},$$ for any $n \geq 0$. As $\mathcal{Z}_{s}  \cap L^{2}(0,T; L_{0}^{2}(\mathbb{T})) \subset C([0,T]; H_{0}^{s}(\mathbb{T}))$, we infer by the semigroup property that there exists some constant $C'>0$ such that
\begin{equation*}
\|(u(t), v(t))\|_{s} \leq C' e^{-\lambda' t}\|(u_{0}, v_{0})\|_{s},
\end{equation*}
provided that $\|(u_{0}, v_{0})\|_{s} \leq \delta$ and the proof of Theorem \ref{localstability} is completed.
\end{proof}

\subsection{A global result: Stabilization result} As mentioned  the stability result presented in Theorem \ref{localstability} is purely local. Now we are in position to extend it to a global stability.  It is well known  \cite{CaCa,laurent2010,Zhang1} in Control Theory that Theorem \ref{main} is a direct consequence of the following observability inequality.

\vspace{0.2cm}

\noindent\textit{Let $T>0$ and $R_{0}>0$ be given. There exists a constant
$ \rho >0$ such that for any $(u_{0},v_0)\in L_{0}^{2}\left(  \mathbb{T}\right) \times L_{0}^{2}\left(  \mathbb{T}\right)$
satisfying $\left\Vert (u_{0},v_0)\right\Vert\leq R_{0}$,  the corresponding solution $(u,v)$ of \eqref{PVIsystem} satisfies
\begin{equation}\label{obsineq}
\|(u_{0}, v_{0})\|^{2} \leq \rho \int_{0}^{T} \|(Gu, Gv)\|^{2}(t) dt.
\end{equation}}

So,  our next steps is to show the observability inequality.

\begin{proof}[Proof of \eqref{obsineq}] Suppose that \eqref{obsineq} does not occur. Thus,  for any $n \in \mathbb{N}$, there exists $(u_{n,0}, v_{n,0}) := (u_{n}(0), v_{n}(0)) \in L_{0}^{2}(\mathbb{T}) \times L_{0}^{2}(\mathbb{T})$ such that the solution $(u_n,v_n) \in X$ of IVP \eqref{PVIsystem}, given by Theorem \ref{gwptheorem}, satisfies
\begin{equation}\label{databounded}
\|(u_{n,0}, v_{n,0})\| \leq R_{0}
\end{equation}
and
  \begin{equation}\label{contradictioncondition}
\int_{0}^{T} \|(Gu_n, Gv_n)\|^{2}(t) dt < \frac{1}{n} \|(u_{n,0}, v_{n,0})\|^{2}.
\end{equation} 
Since $a_{n}:= \|(u_{n,0}, v_{n,0})\|$ is a bounded sequence in $\mathbb{R}$, we can choose a subsequence of $\{a_{n}\}$, still denoted by $\{a_{n}\}$, such that $\lim_{n \rightarrow \infty} a_{n} = a.$ So, there are two possibilities for the limit, which will be divided in the following cases: $a > 0$ and $a =0.$

\vspace{0.2cm}

\noindent\textit{i. Case  $a > 0$.}

\vspace{0.2cm}
Note that the sequence $(u_{n}, v_{n})$ is bounded in $  L^{\infty}(0,T; L^{2}(\mathbb{T}))   \times L^{\infty}(0,T; L^{2}(\mathbb{T}))$ and, also, in $ \mathcal{X}_0$. Thus, applying Lemma \ref{bilinearestlemma} in each term of $P$ and $Q$ we have that $\partial_{x}P(u_{n},v_{n})$ and $\partial_{x}Q(u_{n},v_{n})$ are bounded in $\mathcal{Z}^{1}_{0, -\frac{1}{2}}$ and $\mathcal{Z}^{\alpha}_{0, -\frac{1}{2}}$, respectively,  with $|\zeta| + |\mu| < \epsilon$, for some $\epsilon \ll 1$ and $\alpha < 0$. In particular, is bounded in $\mathcal{X}^{1}_{0, -\frac{1}{2}}$ and $\mathcal{X}^{\alpha}_{0, -\frac{1}{2}}$, respectively. Additionally,  Bourgain spaces are reflexive and have the following compact embedding $\mathcal{X}_0 \hookrightarrow \mathcal{X}_{0, -1}.$ Therefore, we can extract a subsequence of $\{(u_{n}, v_{n})\}$, still denoted by $\{(u_{n}, v_{n})\}$, such that
\begin{equation*}
(u_{n}, v_{n}) \rightharpoonup (u,v) \; \mbox{in} \; \mathcal{X}_0, \quad
(u_{n}, v_{n}) \longrightarrow (u,v) \; \mbox{in} \; \mathcal{X}_{0, -1}
\end{equation*}
and
\begin{equation*}
(\partial_{x}P(u_{n},v_{n}), \partial_{x}Q(u_{n},v_{n})) \rightharpoonup (f,g) \; \mbox{in} \;\mathcal{X}_{0, -\frac{1}{2}},
\end{equation*}
where $(u,v) \in \mathcal{X}_{0}$ and $(f, g) \in \mathcal{X}_{0, -\frac{1}{2}}$. Moreover, since $\mathcal{X}_0$ is continuously  embedded in $L^{4}((0,T)\times \mathbb{T})$, we have
\begin{equation*}
\|u_{n} v_{n}\|_{L^{2}((0,T)\times \mathbb{T})} \leq \|u_{n}\|_{L^{4}((0,T)\times \mathbb{T}) }^{2} \|v_{n}\|_{L^{4}((0,T)\times \mathbb{T})}^{2} \lesssim \|(u_{n}, v_{n})\|_{\mathcal{X}_0}^{4},
\end{equation*}
which implies that $(P(u_{n}, v_{n}), Q(u_{n}, v_{n}))$ is bounded in $L^{2}( (0,T)\times \mathbb{T} ) \times L^{2}((0,T)\times \mathbb{T} )$. Hence, it follows that $\partial_{x}(P(u_{n}, v_{n}), Q(u_{n}, v_{n}))$ is bounded in $L^{2}(0,T;H^{-1}(\mathbb{T})) \times L^{2}(0,T;H^{-1}(\mathbb{T}))  = \mathcal{X}_{-1,0}. $ Interpolating the spaces $\mathcal{X}_{0, - \frac{1}{2}}$ and $\mathcal{X}_{-1, 0}$, we obtain that  $\partial_{x}(P(u_{n}, v_{n}), Q(u_{n}, v_{n}))$ is bounded in $\mathcal{X}_{-\theta, -\frac{1}{2}(1- \theta) }$ for any $\theta \in [0,1]$. As $0 < \theta < 1$, it follows that $\mathcal{X}_{-\theta, -\frac{1}{2}(1- \theta) }$ is compactly embedded in $\mathcal{X}_{-1 , -\frac{1}{2}}$. Thus, we can extract a subsequence of $\{(u_{n}, v_{n})\}$, still denoted by  $\{(u_{n}, v_{n})\}$, such that
\begin{equation}\label{convergence4}
\partial_{x}(P(u_{n}, v_{n}), Q(u_{n}, v_{n})) \longrightarrow  (f,g) \ \mbox{in} \ \mathcal{X}_{-1 , -\frac{1}{2}}.
\end{equation}
Thanks to \eqref{contradictioncondition} and the continuity of G, we ensures that
\begin{equation}\label{convergence1}
\int_{0}^{T} \|(Gu_{n}, Gv_{n})\|^{2} dt \longrightarrow \int_{0}^{T} \|(Gu, Gv)\|^{2} dt  = 0.
\end{equation}
This convergence  means that $(Gu, Gv)= (0,0)$. Besides, since $g$ is positive on $\omega\subset\mathbb{T}$, we have from definition  \eqref{defG}  that 
\begin{equation*}
u(x,t) =\int_{\mathbb{T}} g(y) u(y,t) dy  = c_{1}(t)\ \ \text{and} \ \ v(x,t) = \int_{\mathbb{T}}  g(y) v(y,t) dy= c_{2}(t)\ \ \text{on }\omega \times (0,T).
\end{equation*}
Thus, letting $n \longrightarrow \infty$, we obtain from \eqref{PVIsystem} that
\begin{equation*}
\begin{cases}
\partial_{t} u + \partial_{x}^{3}u + \mu \partial_{x} u + \eta \partial_{x}v = f,&\ \text{on}\  \mathbb{T}\times(0,T),\\
\partial_{t} v + \alpha \partial_{x}^{3}v + \zeta \partial_{x}v + \eta \partial_{x}u = g , &\ \text{on}\   \mathbb{T}\times(0,T),\\
u = c_{1}(t), \quad v = c_{2}(t),&\ \text{on}\  \omega \times (0,T).
\end{cases}
\end{equation*}

Consider  $w_{n} = u_{n} -u$, $z_{n} = v_{n} - v$, $P_{n} = - \partial_{x}P(u_{n}, v_{n}) + f - K_{0}u_{n}$ and $Q_{n} = - \partial_{x} Q(u_{n}, v_{n}) + g - K_{0}v_{n}$. Thus, $$(w_{n}, z_{n}) \rightharpoonup (0,0)\quad \text{in} \quad \mathcal{X}_{0}$$ and satisfies
\begin{equation*}
\begin{cases}
\partial_{t}w_{n} + \partial_{x}^{3}w_{n} + \mu \partial_{x} w_{n} + \eta \partial_{x} z_{n} = P_{n},\\
\partial_{t}z_{n} + \alpha \partial_{x}^{3}z_{n} + \zeta \partial_{x} z_{n} + \eta \partial_{x} w_{n} = Q_{n}.
\end{cases}
\end{equation*}
Now, note that using the linearity of $G$ we can rewrite
\begin{equation*}
\begin{split}
\int_{0}^{T} \|(Gw_{n}, Gz_{n})\|^{2} dt =& \int_{0}^{T} \|(Gu_{n}, Gv_{n})\|^{2} dt + \int_{0}^{T} \|(Gu, Gv)\|^{2} dt - 2 \int_{0}^{T} \left< Gu_{n}, Gu \right> dt\\
& - 2 \int_{0}^{T} \left< Gv_{n}, Gv \right> dt\\
%= &  \int_{0}^{T} \|(Gu_{n}, Gv_{n})\|^{2} dt +  \int_{0}^{T} \|(Gu, Gv)\|^{2} dt\\
%&  - 2\int_{0}^{T} \left< Gw_{n}, Gu \right> dt -2 \int_{0}^{T} \|G{u}\|^{2} dt \\
% & - 2\int_{0}^{T} \left< Gz_{n}, Gv \right> dt -2 \int_{0}^{T} \|G{v}\|^{2} dt \\
%  = & \int_{0}^{T} \|(Gu_{n}, Gv_{n})\|^{2} dt -  \int_{0}^{T} \|(Gu, Gv)\|^{2} dt\\.
%  &  - 2\int_{0}^{T} \left< Gw_{n}, Gu \right> dt - 2\int_{0}^{T} \left< Gz_{n}, Gv \right> dt.
\end{split}
\end{equation*}
From \eqref{convergence1}, we obtain
\begin{equation}\label{convergence3}
\int_{0}^{T} \|(Gw_{n}, Gz_{n})\|^{2} dt \longrightarrow 0.
\end{equation}
On the other hand, 
\begin{equation}\label{identity1}
 \int_{0}^{T} \|(Gw_{n}, Gz_{n})\|^{2} dt  =I + II + III,
\end{equation}
where, 
\begin{equation*}
I = \int_{0}^{T} \int_{\mathbb{T}} g^{2}(x)\left[w_{n}^{2}(x,t) + z_{n}^{2}(x,t)\right] dx dt,
\end{equation*}
\begin{equation*}
 II =  \int_{0}^{T} \left( \int_{\mathbb{T}} g^{2}(x) dx\right) \left( \int_{\mathbb{T}} g(y) w_{n}(y,t) dy \right)^{2} dt  +  \int_{0}^{T} \left( \int_{\mathbb{T}} g^{2}(x) dx\right) \left( \int_{\mathbb{T}} g(y) z_{n}(y,t) dy\right)^{2}  dt 
 \end{equation*}
 and
 \begin{equation*}
 \begin{split}
III =& -2 \int_{0}^{T} \left( \int_{\mathbb{T}} g^{2}(x) w_{n}(x, t) dx \right) \left( \int_{\mathbb{T}}g(y)w_{n}(y,t) dy\right) dt\\
 &-2 \int_{0}^{T} \left( \int_{\mathbb{T}} g^{2}(x) z_{n}(x, t) dx \right) \left( \int_{\mathbb{T}}g(y)z_{n}(y,t) dy\right) dt.
 \end{split}
 \end{equation*} 
Let us prove that each previous term tends to zero as $n \longrightarrow \infty$. First, a direct application of Lemma \ref{convergence2prop} implies that $II \longrightarrow 0$.  Now, we can estimate $III$ as follows
\begin{equation*}
\begin{split}
%\int_{0}^{T} \left( \int_{\mathbb{T}} g^{2}(x) w_{n}(x, t) dx \right) \left( \int_{\mathbb{T}}g(y)w_{n}(y,t) dy\right) dt + \int_{0}^{T} \left( \int_{\mathbb{T}} g^{2}(x) z_{n}(x, t) dx \right) \left( \int_{\mathbb{T}}g(y)z_{n}(y,t) dy\right) dt \leq\
|III| \leq &\ \|b_{n}\|_{L^{2}(0,T)}\left\|\int_{\mathbb{T}} g^{2}w_{n}(x,t) dx\right\|_{L^{2}(0,T)} + \|c_{n}\|_{L^{2}(0,T)}\left\|\int_{\mathbb{T}} g^{2}z_{n}(x,t) dx\right\|_{L^{2}(0,T)} \\
% \leq\/ &\ |b_{n}\|_{L^{2}(0,T)}\|g\|_{L^{4}(\mathbb{T})}^{2}\|w_{n}\|_{L^{2}(\mathbb{T} \times (0,T))} + \|c_{n}\|_{L^{2}(0,T)}\|g\|_{L^{4}(\mathbb{T})}^{2}\|z_{n}\|_{L^{2}(\mathbb{T} \times (0,T))}  \\
 \leq \/ &\ \|g\|_{L^{4}(\mathbb{T})}^{2}\|(b_{n}, c_{n})\|_{L^{2}(0,T) \times L^{2}(0,T)}\|(w_{n}, z_{n})\|_{\mathcal{X}_{0}} 
 \\
 \leq\/ &  \ C \|(b_{n}, c_{n})\|_{L^{2}(0,T) \times L^{2}(0,T)}, 
\end{split}
\end{equation*}
where $C$ is a positive constant. So, it follows that $III \longrightarrow 0$, when $n \longrightarrow \infty$, again thanks to  the Lemma \ref{convergence2prop}. Lastly, combining the last two convergences with \eqref{convergence3}, we infer by \eqref{identity1}  that $I \longrightarrow 0$. 

We claim that 
\begin{equation}\label{convergence4a}
\|(w_{n}, z_{n})\|_{L^{2}( 0,T; L^{2}( \widetilde{\omega} ) ) \times L^{2}(0,T; L^{2}( \widetilde{\omega} ))  } \longrightarrow 0, \quad \text{as} \ n\to\infty.
\end{equation}
In fact, it is sufficient to observe that
\begin{equation*}
\begin{split}
\|(w_{n}, z_{n})\|_{L^{2}( 0,T; L^{2}( \widetilde{\omega}) ) \times L^{2}(0, T; L^{2}(  \widetilde{\omega})) } & = \int_{0}^{T} \int_{\widetilde{\omega}}  |g(x)|^{-2}  \left[ |g(x)|^{2} w_{n}^{2}(x,t)+   |g(x)|^{2} z_{n}^{2}(x,t)  \right] dxdt\\
& \leq \frac{4}{\|g\|_{L^{\infty}(\mathbb{T})}^{2}} \cdot  \int_{0}^{T} \int_{\mathbb{T}} g^{2}(x)\left[w_{n}^{2}(x,t) + z_{n}^{2}(x,t)\right] dx dt,
\end{split}
\end{equation*}
where $\widetilde{\omega}:= \left\{ g(x) > \frac{\|g\|_{L^{\infty}(\mathbb{T})}}{2} \right\}$ and using this previous inequality \eqref{convergence4a} follows. Additionally, note that
\begin{equation*}
\begin{split}
\|(P_{n}, Q_{n})\|_{\mathcal{X}_{-1, - \frac{1}{2}}} \leq &  \|GG^{*}(u_{n}, v_{n})\|_{\mathcal{X}_{-1, - \frac{1}{2}}} +  \|(\partial_{x}P(u_{n}, v_{n}) - f, \partial_{x}Q(u_{n}, v_{n}) - g)\|_{\mathcal{X}_{-1, - \frac{1}{2}}}\\
   \leq & C \|(Gu_{n}, Gv_{n})\|_{\mathcal{X}_{0,0}} +  \|(\partial_{x}P(u_{n}, v_{n}) - f, \partial_{x}Q(u_{n}, v_{n}) - g)\|_{\mathcal{X}_{-1, - \frac{1}{2}}}.
\end{split}
\end{equation*}
From \eqref{convergence4}, \eqref{convergence1} and the previous inequality we obtain
\begin{equation*}
\|(P_{n}, Q_{n})\|_{\mathcal{X}_{-1, - \frac{1}{2}}} \longrightarrow 0,  \ \ \mbox{as} \ n \longrightarrow \infty.
\end{equation*}

Applying Proposition \ref{propagationcompactness} with $b'= 0$ and $b= \frac{1}{2}$ yields that
\begin{equation}\label{convergence5}
(w_{n}, z_{n}) \longrightarrow 0 \ \mbox{in} \ L^{2}_{loc}(0,T; L^{2}( \mathbb{T})  ) \times  L^{2}_{loc}(0,T; L^{2}( \mathbb{T}) ).
\end{equation}
Consequently, 
\begin{equation}\label{convergence6}
(P(u_{n}, v_{n}), Q(u_{n}, v_{n})) \longrightarrow (P(u,v), Q(u,v))\text{ in } L^{1}_{loc}(0,T; L^{1}(\mathbb{T}))  \times L^{1}_{loc}(0,T; L^{1}(\mathbb{T}) )
\end{equation}
and
$$(\partial_{x} P(u_{n}, v_{n}), \partial_{x} Q(u_{n}, v_{n}))\longrightarrow(\partial_{x}P(u,v), \partial_{x}Q(u,v)),$$ in the distributional sense.  Therefore, $(f,g) = (\partial_{x}P(u,v), \partial_{x}Q(u,v))$ and $(u,v) \in \mathcal{X}_{0}$ satisfies
 \begin{equation*}
\begin{cases}
\partial_{t}u + \partial_{x}^{3}u + \mu \partial_{x} u + \eta \partial_{x}v + \partial_{x}P(u,v) =0,&\ \text{on}\  \mathbb{T}\times(0,T),\\
\partial_{t}v + \alpha \partial_{x}^{3}v + \zeta \partial_{x} v + \eta \partial_{x} u + \partial_{x}Q(u,v) =0,&\ \text{on}\  \mathbb{T}\times(0,T),\\
(u, v) = (c_{1}(t), c_{2}(t)),&\ \text{on}\  \omega \times (0,T).
\end{cases}
 \end{equation*}
 From Corollary \ref{uniquecontinuation}, we infer that $(u,v) = (0,0)$ on $\mathbb{T} \times (0,T)$, which combined with \eqref{convergence5} yields that $(u_{n}, v_{n})\to(0,0)$ in $L^{2}_{loc}((0,T); L^{2}(\mathbb{T})) \times L^{2}_{loc}((0,T); L^{2}(\mathbb{T}))$. We can pick some time $t_{0} \in [0,T]$ such that $(u_{n}(t_{0}), v_{n}(t_{0})) \longrightarrow (0,0)$ in $ L^{2}(\mathbb{T}) \times L^{2}(\mathbb{T}) $. Since
 \begin{equation*}
    \|(u_{n}(0), v_{n}(0))\|^{2} = \|(u_{n}(t_{0}), v_{n}(t_{0}))\|^{2} + \int_{0}^{t_{0}}\|(Gu_{n}, Gv_{n})\|^{2} dt,
 \end{equation*}
 it is inferred that $a_{n} = \|(u_{n}(0), v_{n}(0))\| \longrightarrow 0$ which is a contradiction to the assumption $a > 0$.
\vspace{0.2cm}

\noindent\textit{ii.  Case $a = 0$.}

\vspace{0.2cm}
 
 Note first that $a_{n} >0$ for all $n$. Let $(w_{n}, z_{n}) = \left( \frac{u_{n}}{a_{n}}, \frac{v_{n}}{a_{n}}\right)$ for all $n \geq 1$. Then
 \begin{equation*}
 \begin{cases}
 \partial_{t}w_{n} + \partial_{x}^{3}w_{n} + \mu \partial_{x}w_{n} + \eta \partial_{x}z_{n}  + K_{0}w_{n} +  a_n \partial_{x}P(w_{n}, z_{n})=0,\\
  \partial_{t}z_{n} + \alpha \partial_{x}^{3}z_{n} + \zeta \partial_{x}z_{n} + \eta \partial_{x}w_{n} + K_{0}z_{n} + a_n \partial_{x}Q(w_{n}, z_{n})=0,
 \end{cases}
 \end{equation*}
 \begin{equation}\label{convergencecasezero}
 \int_{0}^{T} \|(Gw_{n},Gz_{n})\|^{2} dt < \frac{1}{n}
 \end{equation}
and
\begin{equation}\label{normcasezero}
\|(w_{ n}(0),z_{ n}(0))\| = \frac{\|u_{n}(0)\|}{\|(u_{0,n}, v_{0,n})\|}+  \frac{\|v_{n}(0)\|}{\|(u_{0,n}, v_{0,n})\|} =1.
\end{equation}
So, we obtain that the sequence $\{(w_{n}, z_{n})\}$ which are bounded in both spaces $L^{\infty}(0,T; L^{2}(\mathbb{T})) \times L^{\infty}(0,T; L^{2}(\mathbb{T}))$ and $\mathcal{X}_{0}$. Indeed, $\|(w_{n}(t), z_{n}(t))\|$ is a nonincreasing function of $t$ and since $a_{n}$ is bounded, we have 
 \begin{equation*}
 \begin{split}
 \|(w_{n}, z_{n})\|_{\mathcal{X}_{0}} \leq& \ C_{0} + \frac{(C (\widetilde{\epsilon})T^{1- \widetilde{\epsilon}} + C_{1}T^{\theta})}{a_{n}} \|(u_{n}, v_{n})\|_{\mathcal{X}_{0}} + \frac{C_{0}C_{3} T^{\theta}}{a_{n}^{2}}\|(u_{n}, v_{n})\|_{\mathcal{X}_{0}}.
 \end{split}
 \end{equation*}
We can extract a subsequence of $\{(w_{n}, z_{n})\}$, still denoted by $\{(w_{n}, z_{n})\}$, such that $$(w_{n}, z_{n}) \rightharpoonup  (w,z) \text{ in } \mathcal{X}_{0}, \quad (w_{n}, z_{n})\rightarrow (w,z) \text{ in } \mathcal{X}_{-1, -\frac{1}{2}}\quad \text{and} \quad (w_{n}, z_{n})\rightarrow (w,z) \text{ in } \mathcal{X}_{-1, 0},$$
as $n\to\infty$. Moreover, the sequence $\{(\partial_{x}P(w_{n}, z_{n}), \partial_{x}Q(w_{n}, z_{n}))\}$ is bounded in the space $\mathcal{X}_{0, -\frac{1}{2}}$, and therefore $$a_{n}(\partial_{x}P(w_{n}, z_{n}), \partial_{x}Q(w_{n}, z_{n}))\longrightarrow 0 \text{ in }\mathcal{X}_{0, -\frac{1}{2}},$$
when $n\to\infty.$ Thus, $(w,z)$ is solution of 
\begin{equation*}
\begin{cases}
\partial_{t}w + \partial_{x}^{3}w + \mu \partial_{x}w + \eta\partial_{x} z =0,&\ \text{on}\  \mathbb{T}\times(0,T),\\
\partial_{t}z + \alpha \partial_{x}^{3}z + \zeta \partial_{x}z +\eta \partial_{x} w =0,&\ \text{on}\  \mathbb{T}\times(0,T),\\
w=c_1(t), \quad z= c_{2}(t),&\ \text{on}\  \omega \times (0,T).
\end{cases}
\end{equation*}
Using Holmgren's uniqueness theorem, we can deduce that  $w(x,t)=c_1(t)=c_1$ and $z(x,t)= c_{2}(t)= c_2$. However, as  $[w]=[z]=0$, we infer that $c_1=c_2=0$. 

According to \eqref{convergencecasezero}
\begin{equation*}
 \int_{0}^{T} \|(Gw_{n},Gz_{n})\|^{2} dt  \longrightarrow 0, \quad \text{ as } n\to\infty,
\end{equation*}
and consequently $(K_{0}w_{n}, K_{0}z_{n})$ converges strongly to $(0,0)$ in $\mathcal{X}_{-1, -\frac{1}{2}}$.  Now, arguing as in the case $a>0$,  that is, applying again Proposition \ref{propagationcompactness}, it follows that $$(w_{n}, z_{n}) \longrightarrow (0,0) \text{ in }L^{2}_{loc}(0,T; L^{2}(\mathbb{T})) \times L^{2}_{loc}(0,T; L^{2}(\mathbb{T})), $$ and  consequently $\|(w_{n}(0), z_{n}(0))\| \rightarrow 0 $ which contradicts  \eqref{normcasezero} and \eqref{obsineq} is shown. 
\end{proof}

\subsection{Controllability result: Nonlinear problem}
We now consider the controllability properties to the nonlinear open loop control system \eqref{systemcontrollocal_ns_int}.  The following result is local and classical.

\begin{theorem}\label{cont_local}
Let $T > 0$ and $s \geq 0$ be given. Then there exists a $\delta > 0$ such that for any $(u_{0}, v_{0})$, $(u_{1}, v_{1}) \in H_{0}^{s}(\mathbb{T}) \times H_{0}^{s}(\mathbb{T})$ and $
\|(u_{0}, v_{0})\|_{s} \leq \delta, \ \ \|(u_{1}, v_{1})\|_{s} \leq \delta,$
one can find two control inputs $(f,h) \in L^{2}([0,T]; H_{0}^{s}(\mathbb{T})) \times  L^{2}([0,T]; H_{0}^{s}(\mathbb{T}))$ such that equation \eqref{systemcontrollocal_ns_int} has a solution $$(u,v) \in C([0,T]; H_{0}^{s}(\mathbb{T})) \times  C([0,T]; H_{0}^{s}(\mathbb{T}))$$ satisfying $(u(x,0), v(x,0))= (u_{0}(x), v_{0}(x)) \ \ \mbox{and} \ \ (u(x, T), v(x,T)) = (u_{1}(x), v_{1}(x)).$
\end{theorem}

\begin{proof} The proof is analogous as done in \cite{CaCa, laurent2010,Zhang1}, precisely the result is consequence of Corollary \ref{corcontrollocal} and Remark \ref{controolsistemdiagonal} taking into account the following maps 
\begin{equation*}
\begin{split}
u(t) = S^{1, \mu} (t)u_{0} + \int_{0}^{t} &S^{1, \mu}(t - \tau)(Gf)(\tau) - \eta \int_{0}^{t}S^{1, \mu}(t - \tau)\partial_{x}v(\tau) d \tau \\&- \int_{0}^{t}S^{1, \mu}(t - \tau)\partial_{x}( P(u,v))(\tau)d\tau
\end{split}
\end{equation*}
and 
\begin{equation*}
\begin{split}
v(t) = S^{\alpha, \zeta}(t)v_{0} + \int_{0}^{t} &S^{\alpha, \zeta}(t - \tau)(Gh)(\tau) - \eta \int_{0}^{t}S^{\alpha, \zeta}(t- \tau) \partial_{x}u(\tau) d \tau\\&- \int_{0}^{t}S^{\alpha, \zeta}(t - \tau)\partial_{x}( Q(u,v))(\tau)d\tau.
\end{split}
\end{equation*}
\end{proof}

\section{Further comments and open problems}\label{Sec6}

This article deals by the first time with the global aspect of control problems for the long waves in dispersive media, precisely systems like \eqref{system} with the coupled nonlinearity \eqref{nonlinearitya}. From the perspective of considering the functions with null mean, the presence of the terms involving the constants $B$ and $C$ in the nonlinearities make the system studied coupled in the linear part (as well as in the nonlinear part) as can be seen in \eqref{system1}, that is, precisely the system with the terms $\eta \partial_{x} v $ and $ \eta \partial_{x} u $, in the first and second equations of the system of \eqref{systema}. Unless that $\eta$ can be considered $0$, when $ [u] = [v] = \beta $ and $ B = -C $, the matrix of the operator $ L $ defined by \eqref{operatorL} is not a diagonal matrix. For this reason, the arguments used in general for a singular dispersive equation (e.g. as the KdV case \cite{laurent2010,Russel96}) in the study of the existence of solutions, controllability and stabilization can not be directly applied here.

Note that the previous works are concentrated in a single KdV equation (or, as in the case of \cite{micu2} and \cite{capistranokp2020}, linear results or local results for coupled KdV systems). It is important to point out that in \cite{micu2} the authors left some open problems with respect to the global controllability. In this spirit, our work is dedicated to cover this lack of results, that is, when presenting global results the article intends to give the first step to understand global control problems in periodic domains for systems like \eqref{systema} with quadratic nonlinearities.

\begin{remark}In what concerns our main results the following remarks are
worth mentioning:
\begin{itemize}
\item  The system \eqref{system}, differently from what happens for the KdV  or for a pair of KdV which is decoupled only by the linear part, admits two families of eigenvalues associated with two different families of eigenfunctions. In this way, we proceed carefully to guarantee some spectral properties essential for obtaining a gap condition that satisfies an Ingham type theorem (see \cite[Theorem 4.6]{KomLor2005}). 
\item Another new and important fact, and itself interesting, is that the orthonormal basis for the space $L^{2}(\mathbb{T})\times L^{2}(\mathbb {T})$ formed by the eigenfunctions is not a pair compound for two identical copies of the basis $\{\frac{1}{\sqrt{2\pi}}e^{ikx}\}_{k\in\mathbb{Z}}$, as usual in this kind of problem, which makes the controls obtained for each equations different but comparable to each other.  
\item The global results presented in Theorems \ref{main} and \ref{main1}  are truly nonlinear and, which is more important, are global properties, which means that the initial and final data are controlled in a ball with no size restrictions. 
\item The novelty is that for the first time \textit{Fourier restriction spaces} introduced by Bourgain \cite{Bourgain} are used in two different dispersions to ensure the  global control results. In fact,  T. Oh \cite{Oh} noticed, for the first time,  the possibility of the occurrence of some resonance when dealing with systems of KdV type equations in the periodic setting. This fact might obstruct the gain of extra regularity. Also, more recently,  Yang and Zhang \cite{Zhang} proved new estimates related to this kind of system (see Section \ref{Sec3}). 
\item It is important to mention that the propagation results has been successfully applied in control theory in several systems represented by single equations, such as wave equation \cite{DeGeLe}, after that  for the Schrödinger equation \cite{laurent20101}, for the Benjamin-Ono equation \cite{linaresrosier2015} , KdV equation \cite{laurent2010}, the Kawahara equation \cite{Zhang1}, biharmonic Schrödinger equation \cite{CaCa}, for the Benney-Luke equation \cite{Quintero} and, finally, for the Benjamin equation \cite{PanVielma}.
\end{itemize}
\end{remark}

So, this work opens a series of situations that can be studied to understand the well-posedness theory and global controllability problems for long waves in dispersive media. We will now detail below the novelties of this work and open issues that seem interesting from a mathematical point of view.

\subsection{Control problems}
The problems in this work were solved requiring some conditions over the constants of the system, namely $\alpha$, $\mu$ and $\zeta $. Under the conditions $\alpha < 0$ and $\zeta - \mu > 0$, we find eigenfunctions associated to the operator $ L $ that define an orthonormal basis in $L^{2}(\mathbb{T}) \times L^{2}(\mathbb{T})$, so through a spectral analysis we were able to show, using the \textit{moment method} \cite{Russell}, that linear system \eqref{openloopsystem_int} is exactly controllable.

In addition, the global control problems are also verified thanks to the smoothing properties of the Bourgain spaces. This is the main novelty in this work, since it is not to our knowledge that systems like \eqref{systema} have global control properties (see \cite{capistranokp2020,micu2}  for local results). In fact, the main tool used here is Bourgain spaces in different dispersions. With the smoothing properties of the Bourgain spaces in hand, the propagation of singularities showed in \cite{laurent2010}, for the single KdV equation, can be extended to the coupled KdV system defined by the operator $ L $ and together with the \textit{bilinear estimates} and a \textit{unique continuation property} we can achieve the global control results. 

It important to point out that even Bourgain spaces are important to establish the results for data in $L^2_0(\mathbb{T})\times L^2_0(\mathbb{T})$, the propagation of compactness property is fundamental to obtain our global results. Indeed, in \cite{Russel96} Bourgain spaces were used however the results were local and the arguments there were ``basically" small perturbations of linear results.

Finally,  there is a drawback in our method,  we are able to solve these global control problems only with two controls input. The use of a control in one of the equations is still an open issue.

\subsection{Well-posedness theory}
As mentioned before, the Bourgain spaces are the key point in showing global results in this article. These spaces have been applied with success in the literature for global control results in less regular spaces (see, for instance, \cite{CaCa,laurent20101,laurent2015,laurent2010,PanVielma}).

The Bourgain spaces related to the linear system associated to \eqref{openloopsystem_int}, with $f=g=0$, can be defined \textit{via} the norm $$ \|W(-t)u\|_{H_{x}^{s}H_{t}^{b}} =: \|u\|_{X_{s,b}} $$ where $W(t) := e^{-tL}$ is the strongly continuous group generated by the operator $ L $. However, in the aspect of the nonlinear problem, it is necessary to make bilinear estimates and there are no studies on such estimates with Bourgain spaces of this nature. To get around this situation, we use two Bourgain spaces associated with each dispersion present in \eqref{openloopsystem_int}. This strategy was used  in \cite{Oh, Zhang} for the study of the well-posedness of  KdV-KdV type systems. Thus, the terms $ - \eta \partial_{x} v $ and $ - \eta \partial_{x} u $ are treated as part of nonlinearity in both studies: the well-posedness theory and in the control problem that concerns the asymptotic behavior of solutions to the problem \eqref{PVIsystem}.

Since such spaces do not have equivalent norms, a natural question that arises is whether we can estimate the components of the first equation with Bourgain's norm regarding the dispersion of the second and the converse. In this sense, lemmas \ref{derivativeestimate} and \ref{bilinearestlemma} provide answers to this problem and are essential for this work. 

Lemma \ref{derivativeestimate} is an extension of Lemma 3.10 presented in \cite{Zhang}. In this work we verify that the result is still valid in the range $\frac{1}{3} < b < \frac{1}{3}$. Since $\beta_{1}, \beta_{2}, \gamma_{1}$ and $\gamma_{2}$ are fixed constants, the lemma  is still true if we change the hypothesis $|\gamma_{1}| + |\gamma_{2}| < \epsilon$ for $\nu := |  \beta_{2} - \beta_{1}| -| \gamma_{1} - \gamma_{2}| \geq \widetilde{\delta} >0$, but the condition $\beta_{2} \neq \beta_{1}$  is still required.

In turn, the key point of Lemma \ref{bilinearestlemma} is that the function $H$ defined by \eqref{functionH} is $\delta$-significant, and for it the condition $|\eta| + |\mu| < \epsilon$ is required. Furthermore, due to the nature of the nonlinearity given in \eqref{nonlinearitya} it is also essential that $\alpha$ is strictly negative. To extend the results presented here to a larger class of constants $\alpha,\mu,\zeta,\eta \in \mathbb {R}^{*} $ it is necessary to find conditions for which the function $ H $ is $\delta $-significant. So, the well-posedness theory, global controllability and global stabilization properties for the system \eqref{system1} where $\zeta $ and $\mu $ are not small enough is still an open problem, as well as the case when $ \alpha> 0$.

\appendix
\section{Auxiliary results}\label{Apendice}
In this appendix we will give some results which were used throughout the paper.  The first result gives us properties of the two integral quantities. 
\begin{lemma}\label{convergence2prop}
Let $(w_{n}, z_{n} )\in \mathcal{X}_s$ be bounded sequences.
Define
\begin{equation*}
b_{n} := \int_{\mathbb{T}} g(y) w_{n}(y,t) dy \ \ \mbox{and} \ \ c_{n}:= \int_{\mathbb{T}}g(y)z_{n}(y,t) dy.
\end{equation*}
If $(w_{n}, z_{n}) \rightharpoonup  (0,0)$ in $\mathcal{X}_s$ then
\begin{equation}\label{convergence2}
b_{n}, c_{n} \longrightarrow 0 \ \mbox{in} \ L^{2}(0,T).
\end{equation}
\end{lemma}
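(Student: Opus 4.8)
The plan is to read the statement as a compactness assertion about the linear ``averaging against $g$'' operator $\mathcal G\colon w\mapsto b$, $b(t)=\int_{\mathbb T}g(y)w(y,t)\,dy$. If I can show that $\mathcal G$ is \emph{compact} from $X^{1,\mu,T}_{s,\frac12}$ into $L^2(0,T)$, then, since compact operators send weakly convergent sequences to strongly convergent ones, the hypothesis $w_n\rightharpoonup 0$ immediately gives $b_n=\mathcal Gw_n\to 0$ in $L^2(0,T)$. The second component $z\mapsto c$ is handled identically, with $\phi^{\alpha,\zeta}$ replacing $\phi^{1,\mu}$, so I would only treat $b_n$. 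It suffices to work at $s=0$, since $\langle k\rangle^{s}\ge1$ yields $X^{1,\mu,T}_{s,\frac12}\hookrightarrow X^{1,\mu,T}_{0,\frac12}$ for $s\ge0$.

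To produce compactness I would factor $\mathcal G$ through a space of positive time regularity and then invoke Rellich. Writing $\widehat g$ for the spatial Fourier coefficients of $g$ and $\widetilde w(k,\tau)$ for the space--time Fourier transform of $w$, expanding in spatial Fourier series and taking the time Fourier transform gives $\widehat b(\tau)=2\pi\sum_k\widehat g(-k)\,\widetilde w(k,\tau)$. The heart of the matter is to show that $\mathcal G$ gains a fractional time derivative, namely that for some $\theta\in(0,\tfrac12)$ one has $\|b\|_{H^\theta(0,T)}\le C\,\|w\|_{X^{1,\mu,T}_{0,\frac12}}$. Using Cauchy--Schwarz in $k$ against the dispersive weight $\langle\tau-\phi^{1,\mu}(k)\rangle^{1/2}$ built into the Bourgain norm,
\[
|\widehat b(\tau)|^2\lesssim\Big(\sum_k\frac{|\widehat g(-k)|^2}{\langle\tau-\phi^{1,\mu}(k)\rangle}\Big)\Big(\sum_k\langle\tau-\phi^{1,\mu}(k)\rangle\,|\widetilde w(k,\tau)|^2\Big),
\]
so, multiplying by $\langle\tau\rangle^{2\theta}$ and integrating in $\tau$, the claimed bound reduces to the uniform multiplier estimate
\[
M_\theta:=\sup_{\tau\in\mathbb R}\,\langle\tau\rangle^{2\theta}\sum_{k\in\mathbb Z}\frac{|\widehat g(-k)|^2}{\langle\tau-\phi^{1,\mu}(k)\rangle}<\infty .
\]

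I would prove $M_\theta<\infty$ using only the smoothness of $g$, i.e. the rapid decay $|\widehat g(k)|\le C_N\langle k\rangle^{-N}$ for every $N$. For $|\tau|$ large, split the sum at $|k|\sim\tfrac12|\tau|^{1/3}$: when $|k|\le\tfrac12|\tau|^{1/3}$ one has $|\phi^{1,\mu}(k)|\le\tfrac14|\tau|$, hence $\langle\tau-\phi^{1,\mu}(k)\rangle\gtrsim\langle\tau\rangle$ and that part is $\lesssim\langle\tau\rangle^{-1}\|g\|_{L^2}^2$; when $|k|>\tfrac12|\tau|^{1/3}$ the tail $\sum_{|k|>\frac12|\tau|^{1/3}}|\widehat g(-k)|^2\lesssim_N|\tau|^{-(2N-1)/3}$ beats any power of $\langle\tau\rangle$ once $N$ is taken large. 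Thus $\sum_k|\widehat g(-k)|^2\langle\tau-\phi^{1,\mu}(k)\rangle^{-1}\lesssim\langle\tau\rangle^{-1}$ and $M_\theta<\infty$ for every $\theta\le\tfrac12$. (One performs this on a fixed extension of $w$ to $\mathbb T\times\mathbb R$ of comparable norm and then restricts to $[0,T]$, which is harmless for $\theta<\tfrac12$.) The identical computation with $\phi^{\alpha,\zeta}(k)=\alpha k^3-\zeta k$ yields the bound for $c_n$, because $\alpha\neq0$ preserves the cubic growth.

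With the gain of regularity in hand, I would conclude by composing with the compact Sobolev embedding $H^\theta(0,T)\hookrightarrow\hookrightarrow L^2(0,T)$ (Rellich on a bounded interval, valid for $\theta>0$): $\mathcal G$ is the composition of a bounded map into $H^\theta(0,T)$ with a compact inclusion, hence compact, so $w_n\rightharpoonup0$ in $X^{1,\mu,T}_{s,\frac12}$ forces $b_n\to0$ in $L^2(0,T)$, and likewise $c_n\to0$, which is \eqref{convergence2}. The only genuinely delicate point is the multiplier estimate $M_\theta<\infty$: it quantifies the smoothing produced by averaging against the smooth window $g$ against the dispersive weight $\langle\tau-\phi^{\beta,\gamma}(k)\rangle^{1/2}$, and once it is established the compactness and the weak-to-strong upgrade are automatic.
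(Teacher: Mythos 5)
Your proposal is correct, but it takes a genuinely different — and in fact more careful — route than the paper. The paper's proof is two lines: Cauchy--Schwarz in $y$ gives $\|(b_n,c_n)\|_{L^2(0,T)\times L^2(0,T)}^2\lesssim \|g\|_{L^2(\mathbb{T})}^2\|(w_n,z_n)\|_{\mathcal{X}_{0,0}}$, and then the authors invoke a claimed compact embedding $\mathcal{X}_{0}\hookrightarrow\hookrightarrow\mathcal{X}_{0,0}$ to upgrade weak convergence to strong convergence in $L^2_{t,x}$. That embedding is delicate: it lowers only the modulation index $b$ (from $\tfrac12$ to $0$) with no gain in the spatial index $s$, and a sequence such as suitably normalized $e^{inx}e^{i\phi(n)t}\chi(t)$ is bounded in $X^{1,\mu,T}_{0,\frac12}$, converges weakly to zero, yet does not converge in $L^2_{t,x}$ — so the compactness the paper uses cannot come for free and really must exploit the pairing against the fixed smooth function $g$. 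Your argument supplies exactly this missing ingredient: by proving the multiplier bound $M_\theta<\infty$ (which uses the rapid decay of $\widehat g$ to kill the high spatial frequencies and the cubic growth of $\phi^{\beta,\gamma}$ to handle the low ones), you show the averaging operator $w\mapsto b$ is bounded from $X^{\beta,\gamma,T}_{0,\frac12}$ into $H^{\theta}(0,T)$ for some $\theta>0$, and compactness then follows from Rellich on the bounded interval; the weak-to-strong upgrade is automatic for a compact operator. What the paper's route buys is brevity and the reduction to a single ``standard'' embedding; what yours buys is a self-contained, verifiable proof in which the role of the smoothness of $g$ and of the dispersion is explicit, and which localizes the only real analytic content of the lemma in the estimate $\sup_\tau\langle\tau\rangle^{2\theta}\sum_k|\widehat g(-k)|^2\langle\tau-\phi^{\beta,\gamma}(k)\rangle^{-1}<\infty$. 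The reduction to $s=0$, the treatment of the restriction norm by choosing a near-optimal extension, and the identical handling of $c_n$ via $\phi^{\alpha,\zeta}$ are all fine.
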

\begin{proof}
By Cauchy-Schwarz inequality  we have
\begin{equation*}
\begin{split}
\|(b_{n}, c_{n})\|_{L^{2}_{T} \times L^{2}_{T} }^{2} \leq & \int_{0}^{T}\|g\|_{L^{2}(\mathbb{T})}^{2} \|(w_{n}(\cdot, t), z_{n} (\cdot , t) )\|_{L^{2}(\mathbb{T})}^{2} dt\\
\leq & \ C \|g\|_{L^{2}(\mathbb{T})}^{2} \|(w_{n}, z_{n})\|_{\mathcal{X}_{0, 0}},
\end{split}
\end{equation*}
for some constant $C>0$. By hypothesis  $$(w_{n}, z_{n}) \rightharpoonup  (0,0) \text{ in }\mathcal{X}_0,$$ since $\mathcal{X}_{0}$ is compactly embedded in $\mathcal{X}_{0,0}$ the result is proved.
\end{proof}

The next result of this appendix shows that we can propagate, due to the smoothing effect of the Bourgain spaces, the compactness of a $ \omega \subset \mathbb{T} $ for the entire space $ \mathbb{T}$. 

\begin{proposition}[Propagation of compactness]\label{propagationcompactness}
Let $T > 0$ and $0 \leq  b' <  b  \leq 1$ be given.  Suppose that $(u_{n}, v_{n}) \in \mathcal{X}_{0, b}$ and $(f_{n}, g_{n}) \in \mathcal{X}_{-2 + 2b, -b}$ satisfies
\begin{equation*}
    \begin{cases}
\partial_{t}u_{n} + \partial_{x}^{3}u_{n}  + \mu \partial_{x} u_{n} + \eta \partial_{x}v_{n} = f_{n},&\ \text{on}\  \mathbb{T}\times(0,T),\\
\partial_{t}v_{n} + \alpha \partial_{x}^{3}v_{n} + \zeta \partial_{x} v_{n} + \eta \partial_{x}  u_{n} \ = g_{n},&\ \text{on}\  \mathbb{T}\times(0,T),
    \end{cases}
\end{equation*}
for $n \in \mathbb{N}$. Assume that there exists a constant $C >0$ such that
\begin{equation}\label{hipcompctness1}
    \|(u_{n}, v_{n})\|_{\mathcal{X}_{0, b}} \leq C, \quad \forall n \geq 1,
\end{equation}
and that
\begin{equation}\label{hipcompactness2}
\begin{split}
\|(u_{n}, v_{n})\|_{\mathcal{X}_{-2 + 2b, -b}} \longrightarrow 0 , &\ \ \mbox{as} \  n \longrightarrow \infty , \quad \quad
\|(f_{n}, g_{n})\|_{\mathcal{X}_{-2 + 2b, -b}} \longrightarrow 0 ,  \ \ \mbox{as} \ n \longrightarrow \infty, \\
&\|(u_{n}, v_{n})\|_{\mathcal{X}_{-1 + 2b, -b}} \longrightarrow 0, \ \ \mbox{as} \ n \longrightarrow \infty .
\end{split}
\end{equation}
In addition, assume that for some nonempty open set $\omega \subset  \mathbb{T}$ it holds
\begin{equation*}
(u_{n}, v_{n}) \longrightarrow (0,0) \ \mbox{in} \ L^{2}(0,T;L^2(\omega)) \times  L^{2}(0,T;L^2(\omega)).
\end{equation*}
Then,
\begin{equation*}
(u_{n}, v_{n}) \longrightarrow (0,0) \ \mbox{in} \  L^{2}_{loc}(0,T;L^2(\mathbb{T}))\times  L^{2}_{loc}(0,T;L^2(\mathbb{T})).
\end{equation*}
\end{proposition}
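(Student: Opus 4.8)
The plan is to adapt the positive-commutator (propagation of compactness) argument of Dehman--Gérard--Lebeau, in the Bourgain-space form used by Laurent \emph{et al.} \cite{laurent2010} for the single KdV equation, to the coupled operator $L$. The decisive structural feature is that the principal (third-order) part of $L$ is diagonal, with entries $\partial_x^3$ and $\alpha\partial_x^3$, while the coupling enters only through the first-order terms $\eta\partial_x v_n$ and $\eta\partial_x u_n$. Accordingly, I would attach to each component its own microlocal energy and regard every first-order term---both the genuinely lower-order pieces $\mu\partial_x u_n,\ \zeta\partial_x v_n$ and the coupling $\eta\partial_x u_n,\ \eta\partial_x v_n$---as a perturbation to be absorbed by the weak convergences \eqref{hipcompactness2}.

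Concretely, fix a real symbol $a\in C^\infty(\mathbb{T})$ and introduce the self-adjoint pseudodifferential operators $A_1=a(x)D^{2b-1}$ and $A_2=\operatorname{sgn}(\alpha)\,a(x)D^{2b-1}$, of order $2b-1$ (the same order as in the scalar case), where $D=(1+|\partial_x|^2)^{1/2}$ and the sign is inserted so that, since $\alpha<0$, both commutators $[\partial_x^3,A_1]$ and $[\alpha\partial_x^3,A_2]$ have positive principal symbol, of the form $c\,a'(x)\,\xi^2|\xi|^{2b-1}$ with $c>0$. Differentiating the microlocal energy $\langle A_1u_n,u_n\rangle+\langle A_2 v_n,v_n\rangle$ in time and substituting the system, the dispersive part produces exactly these positive commutators; integrating against a time cutoff $\chi\in C_c^\infty(0,T)$ and integrating by parts in $t$ yields a lower bound of the shape $\int_0^T\!\int_{\mathbb{T}}\chi(t)\,a'(x)\big(|u_n|^2+|v_n|^2\big)\,dx\,dt$ modulo a collection of remainder terms.

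The whole point is that every remainder vanishes as $n\to\infty$: the time-boundary contributions $\langle A_1u_n,u_n\rangle\big|_0^T$ and $\langle A_2v_n,v_n\rangle\big|_0^T$, the first-order and coupling terms, the forcings $\langle A_1 f_n,u_n\rangle,\ \langle A_2 g_n,v_n\rangle$, and the symbolic error terms of the pseudodifferential calculus. This is precisely where the negative indices in \eqref{hipcompactness2} are consumed: each such term carries one factor $A_j$ together with at most one spatial derivative, so after the $L^2$-duality $\big(\mathcal{X}_{s,b}\big)'=\mathcal{X}_{-s,-b}$ one factor lands in $\mathcal{X}_{0,b}$ (bounded by \eqref{hipcompctness1}) and its partner lands in a space along which $(u_n,v_n)$ or $(f_n,g_n)$ converges strongly to zero---namely $\mathcal{X}_{-2+2b,-b}$ or $\mathcal{X}_{-1+2b,-b}$, whose indices are exactly tuned to the order of $A_j$. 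I would check these pairings one by one using the boundedness of $A_j$ between the relevant Bourgain spaces, bounding each remainder by $C\,\|(u_n,v_n)\|_{\mathcal{X}_{0,b}}\cdot o(1)\to0$.

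Passing to the limit gives $\int_0^T\!\int_{\mathbb{T}}\chi\,a'(x)\big(|u_n|^2+|v_n|^2\big)\to0$. Choosing $a$ increasing across an arc slightly larger than $\omega$ (so $a'\ge0$, with $a'$ bounded below off $\omega$) and using the hypothesis $(u_n,v_n)\to(0,0)$ in $L^2(0,T;L^2(\omega))$ to control the sign-indefinite part supported over $\omega$, one upgrades this to $L^2$-convergence on a strictly larger set; a finite covering and connectedness argument on $\mathbb{T}$ then propagates convergence from $\omega$ to every compact subinterval, which is the assertion. The step I expect to be the main obstacle is handling the two distinct dispersions and the coupling simultaneously: one must build the two microlocal energies (with the sign correction forced by $\alpha<0$) so that the off-diagonal terms $\eta\partial_x u_n,\ \eta\partial_x v_n$ remain genuinely lower order relative to the positive commutators and are killed by \eqref{hipcompactness2}, rather than contaminating the leading term; keeping the symbolic-calculus errors under control at the low order $2b-1$ (with $b$ possibly below $\tfrac12$) is the accompanying technical point.
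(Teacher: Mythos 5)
Your overall strategy---a commutator/multiplier identity in Bourgain spaces, with each dispersion carrying its own microlocal energy and the coupling $\eta\partial_x$ treated as a perturbation absorbed by \eqref{hipcompactness2}---is the same as the paper's (which follows Laurent--Rosier--Zhang). However, your concrete choice of multiplier is wrong, and the error is not cosmetic. You take $A_j=a(x)D^{2b-1}$, of order $2b-1$; the paper takes $\Phi=\phi(x)D^{-2}$, of order $-2$, and that order is forced by the hypotheses. Two things break with order $2b-1$. First, the principal symbol of $[\partial_x^3,\,a(x)D^{2b-1}]$ is $\sim a'(x)\,\xi^{2}|\xi|^{2b-1}$, an operator of order $2b+1>0$, so the ``main term'' of your identity is a weighted $H_x^{(2b+1)/2}$ density, not the claimed $\int\chi\,a'\,(|u_n|^2+|v_n|^2)$; it is not even controlled by the bound \eqref{hipcompctness1}. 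Only a multiplier of order $-2$ makes the commutator with $\partial_x^3$ come out at order $0$ (principal part $-3\phi'\partial_x^2D^{-2}\sim 3\phi'$ on nonzero modes), which is what produces the $L^2$ density one propagates. Second, the forcing remainder $\langle A_1 f_n,\psi u_n\rangle=\langle f_n,\psi A_1^*u_n\rangle$ requires $A_1^*u_n$ to lie in $\mathcal{X}_{2-2b,b}$, the dual of the space in which $f_n\to0$; with order $2b-1$ this needs $u_n$ bounded in $\mathcal{X}_{1,b}$, which you do not have, whereas with order $-2$ it needs only $u_n$ bounded in $\mathcal{X}_{-2b,b}\supset\mathcal{X}_{0,b}$. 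Equivalently, $\|A_1f_n\|_{\mathcal{X}_{0,-b}}\lesssim\|f_n\|_{\mathcal{X}_{2b-1,-b}}$ and $2b-1>2b-2$, so the assumed convergence $\|f_n\|_{\mathcal{X}_{-2+2b,-b}}\to0$ does not reach this term. You appear to have imported the order-$(2s-2)$ multiplier from the propagation-of-\emph{regularity} result (Proposition \ref{propagationofregularity}); for propagation of \emph{compactness} at the $L^2$ level the correct order is $-2$.

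Once the order is corrected, the remainder of your outline is essentially the paper's proof: the multiplier is $\Psi_\epsilon=\psi(t)\phi(x)D^{-2}e^{\epsilon\partial_x^2}$ (the regularization you omit is needed to justify the formal computation), the subprincipal and lower-order terms are killed by the convergences in $\mathcal{X}_{-2+2b,-b}$ and $\mathcal{X}_{-1+2b,-b}$, and the genuinely new cross terms coming from $\eta\partial_x$ (the paper's term $IV$, e.g.\ $\langle\eta\psi\,\partial_x\phi\,D^{-2}v_n,u_n\rangle$) are handled by the interpolation $\|u_n\|_{\mathcal{X}^{1}_{(2b'-1)/2,(b-b')/2}}\le\|u_n\|_{\mathcal{X}^1_{0,b}}\|u_n\|_{\mathcal{X}^1_{-1+2b',-b'}}\to0$. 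The conclusion then follows by letting $\phi'$ range over mean-zero test functions and using the assumed convergence on $\omega$, as you describe. Your $\mathrm{sgn}(\alpha)$ normalization of the second energy is a sensible way to fix the relative sign of the two leading terms (a point the paper leaves implicit), but it does not repair the order mismatch above.
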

\begin{proof}
Pick $\phi \in C^{\infty}(\mathbb{T})$ and $\psi \in C_{0}^{\infty}((0,T))$ real valued and set 
\begin{equation*}
\Phi = \phi(x)D^{-2} \ \ \mbox{and} \ \ \Psi = \psi(t)B,
\end{equation*}
where $D$ is defined by 
\begin{equation}\label{k10}
\widehat{D^{r}u}\left(  k\right)  =\left\{
\begin{array}
[c]{ll}%
\left\vert k\right\vert ^{r}\hat{u}\left(  k\right)  & \text{if }%
k\neq0\text{,}\\
\hat{u}\left(  0\right)  & \text{if }k=0\text{.}%
\end{array}
\right.  
\end{equation}
 Since
\begin{equation*}
\begin{split}
\int_{0}^{T} \int_{\mathbb{T}} \Psi u(x,t) v(x,t) dx dt 
= & \int_{0}^{T} \int_{\mathbb{T}} u(x,t) \phi(t)D^{-2} (\phi(x)v(x,t)) dx dt.
\end{split}
\end{equation*}
we have  $\Psi^{*} = \psi(t)D^{-2}\phi(x).$
For any $\epsilon >0$, let $\Psi_{\epsilon} = \Phi e^{\epsilon \partial_{x}^{2}} = \psi(t)\Phi_{\epsilon}$ be regularization of $\Psi$. We define
\begin{equation*}
\begin{split}
\alpha_{ \epsilon}^{1}(u_{n}, v_{n}):=  &\left<[\Psi_{\epsilon} , \mathcal{L}_{1} ]u_{n}, u_{n} \right>_{L^{2}(\mathbb{T} \times(0,T))}, \quad \quad 
\alpha_{\epsilon}^{2}(v_{n}, v_{n}): =  \left<[\Psi_{\epsilon} , \mathcal{L}_{2} ]v_{n}, v_{n} \right>_{L^{2}(\mathbb{T} \times(0,T))},\\
&\alpha_{ \epsilon}^{3} (u_{n}, v_{n}) :=   \left<[\Psi_{\epsilon} , \mathcal{L}_{3} ]u_{n}, v_{n} \right>_{L^{2}(\mathbb{T} \times(0,T))},\\
%\alpha_{n, \epsilon}^{4} := & \left< [\Phi_{\epsilon}, \mathcal{L}_{3}]v_{n} , u_{n} \right>_{L^{2}(\mathbb{T} \times(0,T))},
\end{split}
\end{equation*}
where
\begin{equation*}
\begin{split}
\mathcal{L}_{1}=  \partial_{t} + \partial_{x}^{3} + \mu \partial_{x},\ \
\mathcal{L}_{2} =  \partial_{t} + \alpha \partial_{x}^{3} + \zeta \partial_{x} \ \ \ \text{and} \ \ \
\mathcal{L}_{3} =  \eta \partial_{x}.
\end{split}
\end{equation*}
Note that 
\begin{equation*}
\begin{split}
[\Psi_{\epsilon}, \partial_{t}] w_{n}(x,t) =  - \psi'(t) \phi(x) D^{-2}w_{n}(x,t).
\end{split}
\end{equation*}
Denoting  $\alpha_{n, \epsilon} := \alpha_{\epsilon}^{1}(u_{n}, u_{n}) + \alpha_{\epsilon}^{3}(v_{n}, u_{n}) +\alpha_{\epsilon}^{2}(v_{n}, v_{n}) + \alpha_{\epsilon}^{3}(u_{n}, v_{n}),$
we have
\begin{equation*}
\begin{split}
\alpha_{n, \epsilon}=&  \left< [\Psi_{\epsilon}, \partial_{x}^{3} + \mu \partial_{x}]u_{n} , u_{n} \right> - \left< \psi'(t) \Phi_{\epsilon} u_{n}, u_{n} \right> + \left< [\Psi_{\epsilon}, \alpha \partial_{x}^{3} + \zeta \partial_{x}]v_{n} , v_{n} \right> \\
& - \left< \psi'(t) \Phi_{\epsilon} v_{n}, v_{n} \right>
 + \left< [\Psi_{\epsilon},  \eta \partial_{x}]u_{n} , v_{n} \right> + \left< [\Psi_{\epsilon},  \eta \partial_{x}]v_{n} , u_{n} \right>.
\end{split}
\end{equation*}
On the other hand,
\begin{equation*}
\begin{split}
\alpha_{\epsilon}^{1}(u_{n}, u_{n}) + \alpha_{\epsilon}^{3}(v_{n}, u_{n}) = & \left< \Psi_{\epsilon} (\mathcal{L}_{1} u_{n}), u_{n})\right> + \left< \Psi_{\epsilon}(\mathcal{L}_{3}v_{n}), u_{n} \right> - \left< \mathcal{L}_{1}(\Psi_{\epsilon} u_{n}) , u_{n} \right>  - \left< \mathcal{L}_{3}(\Psi_{\epsilon}v_{n}) , u_{n} \right>\\
= & \left< f_{n} , \Psi_{\epsilon}^{*} u_{n} \right> + \left<\Psi_{\epsilon} u_{n}  , \mathcal{L}_{1}u_{n} \right> + \left< \Psi_{\epsilon}v_{n} ,  \mathcal{L}_{3}u_{n} \right>,
\end{split}
\end{equation*}
since $\mathcal{L}_{1}u_{n} + \mathcal{L}_{3} v_{n} = f_{n}$, $\mathcal{L}_{1}^{*} = - \mathcal{L}_{1}$ and $\mathcal{L}_{3}^{*} = - \mathcal{L}_{3}$. Similarly
\begin{equation*}
\alpha_{\epsilon}^{2}(v_{n}, v_{n}) + \alpha_{\epsilon}^{3}(u_{n}, v_{n}) =  \left< g_{n} , \Psi_{\epsilon}^{*} v_{n} \right> + \left< \Psi_{\epsilon} v_{n}  , \mathcal{L}_{2}v_{n} \right> + \left< \Psi_{\epsilon}u_{n} ,  \mathcal{L}_{3}v_{n} \right>.
\end{equation*}
Thus,
\begin{equation*}
\begin{split}
\alpha_{n, \epsilon} =   \left< f_{n} , \Psi_{\epsilon}^{*} u_{n} \right> +  \left< g_{n} , \Psi_{\epsilon}^{*} v_{n} \right> + \left< \Psi_{\epsilon}u_{n} , f_{n} \right> +  \left< \Psi_{\epsilon}v_{n} , g_{n} \right>.
\end{split}
\end{equation*}
Now, following the ideas almost as done in \cite[Proposition 3.5]{laurent2010} the result is so achieved. 
\end{proof}

The following result concerns the propagation of regularity. Precisely, the result guarantees that if we have gain of derivatives in the spatial space in a subset $ \omega$ of $\mathbb{T}$, then this is also valid in the whole space $\mathbb{T}$.

\begin{proposition}[Propagation of regularity]\label{propagationofregularity}
Let $T>0$, $0 \leq b < 1$, $r \in \mathbb{R}$ and $(p,q) \in \mathcal{X}_{r, -b}$. Let $(u,v) \in \mathcal{X}_{r,b}$ be a solution of
\begin{equation*}
\begin{cases}
\partial_{t} u + \partial_{x}^{3} u +  \mu \partial_{x}u + \eta \partial_{x} v  = p,&\ \text{on}\  \mathbb{T}\times(0,T),\\
\partial_{t} v + \alpha \partial_{x}^{3} v + \zeta \partial_{x}v  + \eta \partial_{x} u = q,&\ \text{on}\  \mathbb{T}\times(0,T).\\
\end{cases}
\end{equation*}
If there exists a nonempty open set $\omega$ of $\mathbb{T}$ such that $$(u,v) \in L^{2}_{loc}(0,T; H^{r + \rho}(\omega)) \times   L^{2}_{loc}(0,T; H^{r + \rho}(\omega)),$$ for some $\rho$ satisfying
\begin{equation*}
0 < \rho \leq \min \left\{1-b , \frac{1}{2} \right\},
\end{equation*}
then  $(u,v) \in L^{2}_{loc}(0,T; H^{r + \rho}(\mathbb{T})) \times   L^{2}_{loc}(0,T; H^{r + \rho}(\mathbb{T}))$.
\end{proposition}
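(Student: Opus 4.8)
The plan is to follow the pseudodifferential commutator scheme of \cite{laurent2010}, adapted to the coupled operator and to the two distinct dispersion coefficients, using the computation already carried out in the proof of Proposition \ref{propagationcompactness} as a template. First I would reduce the global conclusion to a local propagation step: it suffices to show that if the $H^{r+\rho}$ regularity (in $L^2_t$) holds on some open interval $J \subset \mathbb{T}$, then it holds on a strictly larger interval. Since the principal symbol of $\partial_x^3$ is $ik^3$, whose Hamilton flow has velocity $3k^2 \geq 0$ of a single sign, the regularity of $u$ propagates in one fixed direction around the circle; likewise, as $\alpha<0$, the velocity $3\alpha k^2 \leq 0$ propagates the regularity of $v$ in the opposite direction. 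In both cases, because $\mathbb{T}$ is compact and $\omega$ is nonempty and open, finitely many such local steps sweep the whole circle, giving $(u,v) \in L^2_{loc}(0,T; H^{r+\rho}(\mathbb{T})) \times L^2_{loc}(0,T; H^{r+\rho}(\mathbb{T}))$.

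For the local step I would introduce, exactly as in Proposition \ref{propagationcompactness}, a regularized operator $\Psi_\epsilon = \psi(t)\,\chi(x)\,D^{2(r+\rho)-2}\,e^{\epsilon \partial_x^2}$, where $\psi \in C_0^\infty((0,T))$, $D$ is given by \eqref{k10}, and $\chi \in C^\infty(\mathbb{T})$ is a cutoff transitioning monotonically from the region where regularity is known into the target region, so that $\chi'$ has the favorable sign dictated by the direction of propagation. Testing the two equations against $\Psi_\epsilon u$ and $\Psi_\epsilon v$ and forming the commutator quantity $\alpha_{\epsilon} := \langle [\Psi_\epsilon,\mathcal{L}_1]u,u\rangle + \langle[\Psi_\epsilon,\mathcal{L}_2]v,v\rangle + \langle[\Psi_\epsilon,\mathcal{L}_3]u,v\rangle + \langle[\Psi_\epsilon,\mathcal{L}_3]v,u\rangle$, I would isolate the leading contributions of $[\partial_x^3,\chi D^{2(r+\rho)-2}]$ and $[\alpha\partial_x^3,\chi D^{2(r+\rho)-2}]$, whose principal parts are $3\chi'(x)D^{2(r+\rho)}$ and $3\alpha\chi'(x)D^{2(r+\rho)}$. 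These yield, modulo lower order terms, the coercive quantities $\int \psi\chi' |D^{r+\rho}u|^2$ and $|\alpha|\int \psi\chi'|D^{r+\rho}v|^2$, which control the $H^{r+\rho}$ norm on the target region in terms of the same norm on the source region (known by hypothesis) plus the pairings $\langle \Psi_\epsilon u, p\rangle$, $\langle \Psi_\epsilon v, q\rangle$ and their symmetric counterparts.

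The remaining terms must be shown to be harmless uniformly in $\epsilon$. The pairings against $(p,q) \in \mathcal{X}_{r,-b}$ are handled by the duality between $\mathcal{X}_{r+\rho-1,\,b}$ and $\mathcal{X}_{-(r+\rho-1),\,-b}$ together with the Bourgain-space mapping properties of $\Psi_\epsilon$; this is precisely where the restriction $\rho \leq \min\{1-b,\tfrac{1}{2}\}$ enters, the bound $1-b$ ensuring compatibility with the dual regularity $-b$ of the source, and the bound $\tfrac12$ matching the two-derivative gain afforded by the third-order commutator in the energy balance. The coupling contributions $\langle[\Psi_\epsilon,\eta\partial_x]u,v\rangle$ and $\langle[\Psi_\epsilon,\eta\partial_x]v,u\rangle$, where $[\Psi_\epsilon,\eta\partial_x]$ has order only $2(r+\rho)-2$, lie two full derivatives below the leading term and are therefore absorbed by the a priori regularity $(u,v)\in\mathcal{X}_{r,b}\subset L^2_{loc}(0,T;H^r(\mathbb{T}))\times L^2_{loc}(0,T;H^r(\mathbb{T}))$, exactly as the term $IV$ was treated at the end of the proof of Proposition \ref{propagationcompactness}.

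The main obstacle I anticipate is twofold. First, one must pass to the limit $\epsilon \to 0$ with bounds independent of the regularization, so that the coercive lower bound survives; this requires care because $\Psi_\epsilon$ is of positive order here (in contrast to the order $-2$ operator used for compactness), so the commutator remainders are genuinely of top order minus one and must be estimated, not merely seen to vanish. Second, the two opposite propagation directions for $u$ and $v$ force the cutoff $\chi$ to be chosen with opposite monotonicity in the two coercive terms; I would therefore run the argument separately for each component, treating the coupling $\eta\partial_x$ as a forcing term of strictly lower order, and only then combine the two conclusions. Once the local gain is established in both components, iteration around $\mathbb{T}$ completes the proof.
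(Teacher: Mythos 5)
Your proposal is essentially the paper's argument: the paper also regularizes (via $u_n=e^{\frac{1}{n}\partial_x^2}u$), forms the commutator of $\mathcal{L}_1,\mathcal{L}_2,\mathcal{L}_3$ with $\Psi=\psi(t)D^{2s-2}\phi(x)$, $s=r+\rho$, extracts the coercive terms $3\int\psi\phi'|D^{s}u|^2$ and $3\alpha\int\psi\phi'|D^{s}v|^2$, and controls the remainders exactly as in \cite[Proposition 3.6]{laurent2010}. Two remarks on where you deviate. First, the ``one-directional sweep around $\mathbb{T}$'' is a red herring imported from hyperbolic propagation of singularities: in this multiplier identity the sign of the commutator coefficient, not a group velocity, decides which regions are gained, so a single global choice $\phi'=\psi_0-\psi_1$ with $\psi_0\ge 0$ an arbitrary bump at the target point and $\psi_1\ge 0$ supported in $\omega$ (mean-zero on $\mathbb{T}$) settles every target location in one step; no iteration or adjacency to $\omega$ is needed. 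Second, your observation that the two coercive terms carry coefficients of opposite sign (since $\alpha<0$) and that one must therefore run the identity separately for each component, treating $\eta\partial_x v$ (resp.\ $\eta\partial_x u$) as a source of order low enough to be absorbed when $\rho\le\frac12$, is a genuine point that the paper's sketch glosses over entirely; this is the right fix, and it is where the cross-dispersion estimates of Lemma \ref{derivativeestimate} (or a direct $H^{s-\frac12}\times H^{s-\frac12}$ duality bound) enter. With those two adjustments your outline matches the intended proof.
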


\begin{proof}
Set $s = r + \rho$ and for $n \in \mathbb{N}$ consider
\begin{equation*}
\begin{cases}
u_{n} = e^{\frac{1}{n} \partial_{x}^{2}}u =: \Xi_{n}u, \ \ p_{n} = \Xi_{n}p = \mathcal{L}_{1}u_{n} + \mathcal{L}_{3}v_{n},\\
v_{n} = e^{\frac{1}{n} \partial_{x}^{2}}v =: \Xi_{n}v, \ \ q_{n} = \Xi_{n}q = \mathcal{L}_{2}v_{n} + \mathcal{L}_{3}u_{n}.
\end{cases}
\end{equation*}
There exists a constant $C > 0$ such that
\begin{equation*}
\|(u_{n}, v_{n})\|_{\mathcal{X}_{r,b}} \leq C \quad \text{and} \quad \|(p_{n}, q_{n})\|_{\mathcal{X}_{r, -b}} \leq C, \ \ \forall \ n \in \mathbb{N}.
\end{equation*}
Pick $\phi \in C^{\infty}(\mathbb{T})$ and $\psi \in C_{0}^{\infty} ((0,T))$ as in the proof of Proposition \ref{propagationcompactness}, and set  $\Phi = D^{2s-2}\phi(x)$ and $\Psi= \psi(t) B$,  where $D$ is defined by \eqref{k10}. We have 
\begin{equation*}
\begin{split}
\left< \mathcal{L}_{1}u_{n}  \right. & \left. + \mathcal{L}_{3}v_{n}, \Psi^{*} u_{n}, \right>  + \left< \Psi u_{n} ,   \mathcal{L}_{1}u_{n} + \mathcal{L}_{3}v_{n}\right> \\
%= & \left< \Psi ( \partial_{t} u_{n}), u_{n} \right>  + \left< \Psi (\partial_{x}^{3}  +\mu \partial_{x}) u_{n} ,u_{n} \right> + \left< \eta \Psi \partial_{x} v_{n}, u_{n}  \right>\\
%&  - \left<\partial_{t} ( \Psi u_{n})  , u_{n}\right> - \left<( \partial_{x}^{3} + \mu \partial_{x})\Psi u_{n} , u_{n} \right> - \left< \eta \partial_{x} \Psi v_{n} , u_{n} \right>\\
 = & \left< [\Psi , \partial_{x}^{3} + \mu \partial_{x}]u_{n} , u_{n} \right> +  \left< [\Psi , \eta \partial_{x}]v_{n} , u_{n} \right>  - \left< \psi'(t) \Phi u_{n} , u_{n} \right>,
\end{split}
\end{equation*}
and, similarly
\begin{equation*}
\begin{split}
\left< \mathcal{L}_{2}v_{n} \right. & \left. + \mathcal{L}_{3}u_{n}, \Psi^{*} v_{n}, \right>  + \left< \Psi v_{n} ,   \mathcal{L}_{2}v_{n} + \mathcal{L}_{3}u_{n}\right> \\
 = &  \left< [\Psi , \alpha \partial_{x}^{3} + \zeta \partial_{x}]v_{n} , v_{n} \right> +  \left< [\Psi , \eta \partial_{x}]u_{n} , v_{n} \right> - \left< \psi'(t) \Phi v_{n} , v_{n} \right>,
\end{split}
\end{equation*}
where $\mathcal{L}_i$, for $i=1,2,3$, were defined on Proposition \ref{propagationcompactness}. With this in hand, the result follows in a similar way as proved  in \cite[Proposition 3.6]{laurent2010}.
\end{proof}

We are in position to prove the unique continuation property for our dispersive operator.

\begin{corollary}\label{uniquecontinuation}
Let $\omega$ be a nonempty set in $\mathbb{T}$ and $$(u,v) \in \mathcal{X}_{0}^1 \cap C([0,T]; L_{0}^{2}(\mathbb{T})) \times \mathcal{X}_{0}^{\alpha} \cap C([0,T]; L_{0}^{2}(\mathbb{T}))$$ be solution of  \eqref{PVIsystem}. Suppose that $(u,v)$ satisfies
 \begin{equation*}
\begin{cases}
\partial_{t}u + \partial_{x}^{3}u + \mu  \partial_{x} u + \eta  \partial_{x}v + \partial_{x}P(u,v) =0,&\ \text{on}\  \mathbb{T}\times(0,T),\\
\partial_{t}v + \partial_{x}^{3}v + \zeta \partial_{x}v + \eta \partial_{x} u + \partial_{x}Q(u,v) =0,&\ \text{on}\  \mathbb{T}\times(0,T),\\
(u(x,t), v(x,t)) = (c_{1}(t), c_{2}(t)),& \ \mbox{for a.e.} \ (x,t) \in \omega \times (0,T),
\end{cases}
 \end{equation*}
 where $c_{1}$, $c_{2} \in L^{2}(0,T) \times L^{2}(0,T)$. Then $(u(x,t), v(x,t)) \equiv (0,0)$ for a.e.  $(x,t) \in \mathbb{T} \times (0,T) $.
\end{corollary}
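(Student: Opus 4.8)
The plan is to derive the statement purely from the two auxiliary lemmas recorded just above, in three stages: first identify the traces $c_1,c_2$ and upgrade the regularity of $(u,v)$ on $\omega$; then propagate that smoothness to all of $\mathbb{T}$; and finally run a spectral unique continuation argument that forces $(u,v)$ to be constant, hence zero by the zero–mean normalisation.

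First I would exploit the system on the open set $\omega\times(0,T)$, where by hypothesis $u(x,t)=c_1(t)$ and $v(x,t)=c_2(t)$ are independent of $x$. Consequently $\partial_x u=\partial_x^3 u=\partial_x v=\partial_x^3 v=0$ there, and since $P(u,v)$ and $Q(u,v)$ reduce to functions of $t$ alone, also $\partial_x P(u,v)=\partial_x Q(u,v)=0$ on $\omega\times(0,T)$. Restricting the two equations to this set therefore leaves only $\partial_t u=0$ and $\partial_t v=0$ in the distributional sense on $(0,T)$, so $c_1'\equiv c_2'\equiv 0$ and $c_1,c_2$ are constants. In particular $(u,v)\equiv(c_1,c_2)$ is of class $C^\infty$ on $\omega\times(0,T)$, which is exactly the hypothesis of the first auxiliary lemma above (the nonlinear smoothing statement obtained from Proposition \ref{propagationofregularity}). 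Applying it yields $(u,v)\in C^\infty(\mathbb{T}\times(0,T))\times C^\infty(\mathbb{T}\times(0,T))$.

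With a globally smooth solution in hand, I would set $(\tilde u,\tilde v):=(u-c_1,v-c_2)$, which vanishes on $\omega\times(0,T)$ and solves the same system modulo lower–order terms carrying the constants $c_1,c_2$. The goal of the last stage is $\tilde u\equiv\tilde v\equiv 0$: once this holds, $(u,v)\equiv(c_1,c_2)$, and then $[u]=[v]=0$ forces $c_1=c_2=0$, which is the assertion. To obtain the vanishing I would fix $t\in(0,T)$ and analyse the Fourier support of the smooth profiles $\tilde u(\cdot,t),\tilde v(\cdot,t)$, using the two dispersion relations $\phi^{1,\mu}(k)=k^3-\mu k$ and $\phi^{\alpha,\zeta}(k)=\alpha k^3-\zeta k$; the strategy is to manufacture out of these profiles auxiliary functions with one–sided (nonnegative) spectrum that still vanish on $\omega$, so that Lemma \ref{auxlemma1} forces each such function to vanish on all of $\mathbb{T}$, and then to recombine them to conclude $\tilde u(\cdot,t)\equiv\tilde v(\cdot,t)\equiv 0$ for a.e. $t$.

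The hard part will be precisely this final stage. In the scalar Korteweg–de Vries setting the analogous reduction to a one–sided–spectrum function is carried out in \cite{laurent2010}, but here the two equations are genuinely coupled through $\eta\partial_x v$ and $\eta\partial_x u$ and, more seriously, carry two distinct dispersions (the coefficients $1$ and $\alpha<0$ in front of $\partial_x^3$), so one cannot separate frequencies componentwise in a decoupled way. I expect the correct route is to first diagonalise the linear part along the eigenvectors $Z_k^\pm$ of Proposition \ref{propertiesL}, so that each diagonal mode is governed by a single dispersion, and only afterwards to apply the one–sided–spectrum Lemma \ref{auxlemma1} to the resulting scalar profiles. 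Controlling how the constant lower–order and nonlinear contributions interact with this diagonalisation while preserving the vanishing on $\omega$ is the delicate point to be handled with care.
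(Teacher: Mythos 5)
Your first stage is essentially the paper's opening move: on $\omega\times(0,T)$ all $x$-derivatives of $u,v,P(u,v),Q(u,v)$ vanish, so the equations force $c_1'=c_2'=0$. (The detour through global $C^\infty$-smoothness via the propagation-of-regularity corollary is harmless but not needed by the paper, which works directly with distributions of negative order.) The genuine gap is your third stage: you explicitly leave the decisive unique-continuation step as an ``expected route'' rather than a proof, and the route you sketch --- diagonalising the linear part along the eigenvectors $Z_k^{\pm}$, separating the two dispersions $\phi^{1,\mu}$ and $\phi^{\alpha,\zeta}$, and tracking how the nonlinear and constant terms interact with that diagonalisation --- is both incomplete and misdirected. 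None of that machinery enters the paper's argument.

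What actually closes the proof is a purely \emph{static, componentwise} observation that never uses the PDE, the dispersion relations, or the coupling. Fix $t\in(0,T)$. Because $u(\cdot,t)=c_1(t)$ on the open set $\omega$, the distribution $p:=\partial_x^3u(\cdot,t)\in H^{-3}(\mathbb{T})$ vanishes on $\omega$; it is real-valued and has zero mean, so $\widehat{p_0}=0$ and $\widehat{p_{-k}}=\overline{\widehat{p_k}}$, and the vanishing of $p$ on $\omega$ reduces to the vanishing there of its nonnegative-frequency part $\sum_{k>0}\widehat{p_k}e^{ikx}$, to which Lemma \ref{auxlemma1} applies and yields $p\equiv0$ on all of $\mathbb{T}$. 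The same argument applies verbatim and independently to $q:=\partial_x^3v(\cdot,t)$. Hence $u(\cdot,t)$ and $v(\cdot,t)$ are constant in $x$ for a.e.\ $t$, and the zero-mean normalisation gives $(u,v)\equiv(0,0)$. In particular, your concern about the terms $\eta\partial_xv$ and $\eta\partial_xu$ and the two distinct third-order coefficients is moot at this stage: the fact that $\partial_x^3u(\cdot,t)$ and $\partial_x^3v(\cdot,t)$ vanish on $\omega$ is a pointwise consequence of the constancy hypothesis alone, not of the evolution, so no frequency separation adapted to $L$ is required. Without this (or some substitute) your argument does not reach the conclusion.
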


\begin{proof}
Since  $(u(x,t), v(x,t)) = (c_{1}(t), c_{2}(t))$ for a.e. $(x,t) \in \omega \times (0,T)$, we have that
\begin{equation}\label{identity2}
\partial_{t}u = c_{1}'(t) = 0\quad \text{and}\quad
\partial_{t}v = c_{2}'(t) = 0.
\end{equation}
Pick a time $t \in (0,T)$ as above and define $(p, q) := (\partial_{x}^{3}u ( \cdot, t) , \partial_{x}^{3}v (\cdot, t)) $. Thus, it holds that  $(p, q ) \in H^{-3}(\mathbb{T}) \times H^{-3}(\mathbb{T})$ with $(p, q) = (0,0)$.  Decompose $p$ and $q$ as 
\begin{equation*}
    p(x) = \sum_{k \in \mathbb{Z}} \widehat{p_{k}}e^{ikx} \ \mbox{and} \  q(x)= \sum_{k \in \mathbb{Z}} \widehat{q_{k}}e^{ikx}.
\end{equation*}
the convergence of the Fourier series being in $H^{-3}(\mathbb{T})$. Since  $p$ and $q$ are  real-valued functions, we also have that $\widehat{p_{-k}} = \widehat{p_{k}}$  for all $k$ and the same is true for $q$. Then,  
\begin{equation*}
0 = p(x) = \sum_{ k > 0}  \widehat{p_{k}}e^{ikx} \quad \text{and} \quad 0 = q(x) = \sum_{k > 0}\widehat{q_{k}}e^{ikx},
\end{equation*}
 for each $x \in \omega$. Applying \cite[Lemma 2.9]{linaresrosier2015} to $p$ and $q$ we obtain $(p, q)  \equiv (0,0)$ on $\mathbb{T}$. It follows, $\partial_{x}^{3} u = \partial_{x}^{3}v = \partial_{x} u = \partial_{x}v =0$ on $\mathbb{T}$ for a.e $t \in(0,T)$. Hence, 
 \begin{equation*}
(u(x,t), v(x,t)) = (c_{1}(t), c_{2}(t)) \ \mbox{for a.e.} \ (x,t) \in \mathbb{T} \times (0,T).
 \end{equation*}
As in \eqref{identity2} we deduce that for a.e.  $(x,t) \in \mathbb{T} \times (0,T)$ we have $(c'_{1}(t), c'_{2}(t))=(0,0)$ for $c_{1}$, $c_{2} \in \mathbb{R}$, which,
combined with the fact that $[u_{0}] = [v_{0}]=0$, gives that  $c_{1} =  c_{2}=0$. The proof of Corollary \ref{uniquecontinuation} is complete.
\end{proof}

\subsection*{Acknowledgments} %The authors wish to thank the referee for his/her valuable comments which improved this paper. 
Capistrano–Filho was supported by CNPq grant 307808/2021-1,  CAPES grants 88881.311964/2018-01 and 88881.520205/2020-01,  MATHAMSUD grant 21-MATH-03 and Propesqi (UFPE).  Gomes was supported by PNPD-INCTMat  Capes grant 88887505895/2020-00. This work was done during the postdoctoral visit of the second author at the Universidade Federal de Pernambuco, who thanks the host institution for the warm hospitality.

\end{document}